\newtheorem{teorema}{Theorem}[section]
\newtheorem*{theorem*}{Structure Theorem}
\newtheorem{lemma}[teorema]{Lemma}
\newtheorem{propos}[teorema]{Proposition}
\newtheorem{corol}[teorema]{Corollary}
\theoremstyle{definition}
\newtheorem{ex}{Example}[section]
\newtheorem{rem}{Remark}[section]
\newtheorem{defin}[teorema]{Definition}
\def\R{{\mathbb R}}  
\def\N{{\mathbb N}}  
\def\C{{\mathbb C}}  
\def\Q{{\mathbb Q}}
\def\D{\Delta}
\def\oli{\overline}
\newcommand{\Chi}{\mathfrak{X}}
\newcommand{\hcal}{\mathcal H}
\def\Int{\stackrel{\rm o}}
\def\rdim{{\rm dim}_{\R}}
\def\rcodim{{\rm codim}_{\R}}
\def\oli{\overline}				
\def\uli{\underline}   
\def\p{\partial}
\def\Crt{{\rm Crt}}
\newcommand{\Min}{{\rm Min}}
\title[WEAKLY COMPLETE COMPLEX SURFACES]{WEAKLY COMPLETE COMPLEX SURFACES}
\author[S.~Mongodi]{Samuele Mongodi\textsuperscript{1}}
\address[\textsuperscript{1}]{Dipartimento di Matematica - Universit\`a di ``Tor Vergata'', Via della Ricerca Scientifica --- I-00133 Roma, Italy}
\email{mongodi@mat.uniroma2.it} 
\author[Z.~Slodkowski]{Zbigniew Slodkowski\textsuperscript{2}}
\address[\textsuperscript{2}]{Department of Mathematics, University of Illinois at Chicago, 851 South Morgan Street, Chicago, Illinois 60607, Usa}
\email{zbigniew@uic.edu}
\author[G.~Tomassini]{Giuseppe Tomassini\textsuperscript{3}}
\address[\textsuperscript{3}]{Scuola Normale Superiore, Piazza dei Cavalieri, 7 - I-56126 Pisa, Italy}
\email{g.tomassini@sns.it}
  \date{\today}
 \subjclass[2010]{Primary 32E, 32T, 32U; Secondary 32E05, 32T35, 32U10}
\keywords{Pseudoconvex domains, Weakly complete spaces, Holomorphic foliations}
\begin{document}
\begin{abstract}
A weakly complete space is a complex space admitting a (smooth) plurisubharmonic exhaustion function. In this paper, we classify those weakly complete complex surfaces for which such exhaustion function can be chosen real analytic: they can be modifications of Stein spaces or proper over a non compact (possibly singular) complex curve or foliated with real analytic Levi-flat hypersurfaces which in turn are foliated by dense complex leaves (these we call surfaces of Grauert type). In the last case, we also show that such Levi-flat hypersurfaces are in fact level sets of a global proper pluriharmonic function, up to passing to a holomorphic double cover of the space.

An example of Brunella shows that not every weakly complete surface can be endowed with a real analytic plurisubharmonic exhaustion function.

Our method of proof is based on the careful analysis of the level sets of the given exhaustion function and their intersections with the \emph{minimal singular set}, i.e the set where every plurisubharmonic exhaustion function has a degenerate Levi form.\end{abstract}
\maketitle
\tableofcontents
\section{Introduction}\label{intr}\noindent
A {\em weakly complete} complex space is a connected complex space $X$ endowed with a
smooth plurisubharmonic exhaustion function $\varphi$. 

Weakly complete subdomains of Stein spaces are Stein; complex spaces which are proper over (i.e. endowed with a proper surjective holomorphic  map onto) a Stein space, e.g. modifications of Stein spaces, are weakly complete.

More interesting examples of weakly complete complex spaces which are not Stein are the pseudoconvex subdomains of a complex torus constructed by Grauert (cfr. \cite{Nar}). 

Let us briefly compare these two classes of examples. In the former, $\mathcal{O}(X)\neq\C$ and, if there is a positive dimensional complex space contained in a level set of $\varphi$, then it is a compact subspace of $X$. Such compact subspaces are responsible for the degeneration of the Levi form of $\varphi$ (or of any other plurisubharmonic function). In the latter, $\mathcal{O}(X)=\C$ and there exists a smooth plurisubharmonic exhaustion function $\varphi$ whose regular level sets are Levi-flat hypersurfaces, foliated by dense complex leaves (along which the Levi form of $\varphi$ degenerates). In such a situation $X$ is said to be a surface of \emph{Grauert type}.

A question naturally arises: are these two phenomena the only possible for a weakly complete space?

Even if this problem has never been explicitly addressed, we find some partial results throughout the literature about the various forms of the Levi problem. For instance, in \cite{Ohs}, Ohsawa shows (Proposition 1.4) that a weakly complete surface (i.e. 2-dimensional complex manifold) with a non-constant holomorphic function is holomorphically convex, hence proper on a Stein space, by Remmert's theorem. Later on, Diederich and Ohsawa tackle a weak form of the Levi problem for a domain with real analytic boundary (cfr. \cite{DieOhs}). More recently, in \cite{GMO} Gilligan, Miebach and Oeljeklaus study the case of a pseudoconvex domain of any dimension, spread over a complex homogeneous manifold.

The general problem is already hard for complex surfaces thus, before studying in complete detail the general weakly complete surface $X$, it seems worthwile to analyse the special case of surfaces admitting a \emph{real analytic} plurisubharmonic exhaustion function. In this vein Brunella (cfr.{\cite{Bru}) constructed an example of weakly complete surface which does not admit a real analytic plurisubharmonic exhaustion function. In order to do so, he studied the possible geometries of some special weakly complete surfaces, namely, those obtained as the complement of a curve with topologically trivial normal bundle in a compact (algebraic) surface not containing any $(-2)$-curve (i.e. smooth, rational, with self-intersection $-2$). He showed that, in presence of a real analytic plurisubharmonic exhaustion function, these particular instances of weakly complete surfaces are either Stein, proper over a Stein space, or of Grauert type.

In this paper, we prove that this classification holds in general. Namely, we obtain the following
\begin{theorem*}Let $X$ be a weakly complete complex surface, with a real analytic plurisubharmonic exhaustion function $\alpha$. Then one of the following three cases occurs:
\begin{itemize}
\item[i)] $X$ is a modification of a Stein space of dimension $2$,
\item[ii)] $X$ is proper over a (possibly singular) open complex curve,
\item[iii)]$X$ is a Grauert type surface.
\end{itemize}
Moreover, in case \emph{iii)}, either the critical set $\Crt(\alpha)$ of $\alpha$ has dimension $\le 2$ and then 
\begin{itemize}
\item[iii-a)] the absolute minimum set $Z$ of $\alpha$ is a compact complex curve $Z\subset X$ and there exists a proper pluriharmonic function $\chi:X\setminus Z\to \R$ such that every plurisubharmonic function on $X\setminus Z$ is of the form $\lambda\circ\chi$,
\end{itemize}
or it is of has dimension $3$ and then
\begin{itemize}
\item[iii-b)] there exist a double holomorphic covering map $\pi:X^*\to X$ and a proper pluriharmonic function $\chi^*:X^*\to \R$ such that every plurisubharmonic function on $X^*$ is of the form $\lambda\circ\chi^*$.
\end{itemize}
\end{theorem*}

\noindent The first part of the Structure Theorem is the content of Theorem \ref{MAR1}, the second one of the theorems \ref{SCOTTISH} and \ref{maint}.
  
\medskip
  
  The method we adopted to tackle the problem consists of a careful analysis of the structure of the level sets of $\alpha$ and their behaviour with respect to the \emph{minimal singular set}. This set was introduced in \cite{ST} for any weakly complete complex space; let us recall its definition. Given any plurisubharmonic exhaustion function $\varphi$ in $X$, let $\Sigma^1_\varphi$ be the minimal closed set such that $\varphi$ is strictly plurisubharmonic on $X\setminus\Sigma^1_\varphi$, and set $\Sigma^1=\Sigma^1(X)=\bigcap\limits_{\varphi}\Sigma^1_\varphi$, i.e.  $x\in\Sigma^1$ if no
plurisubharmonic exhaustion function is strictly plurisubharmonic near
$x$.
  
A plurisubharmonic exhaustion function $\varphi$ is called \emph{minimal} if $\Sigma^1=\Sigma^1_\varphi$. 

The following crucial properties were proved in \cite{ST} when $X$ is a complex manifold:
\begin{itemize}
\item[a)] there exist minimal functions $\varphi$ (cfr. \cite[Lemma 3.1]{ST});
\item[b)] if $\varphi$ is minimal the nonempty level sets 
$\Sigma^1_c=\{\varphi=c\}\cap\Sigma^1$
have the local maximum property (cfr. \cite[Theorem 3.6]{ST});
\item[c)] if ${\rm dim}_{_\C}X=2$ and $c$ is a regular value of
$\varphi$, then the (nonempty) level sets $\Sigma^1_c$ are compact sets foliated
by Riemann surfaces (cfr. \cite[Lemma 4.1]{ST}).
\end{itemize}
If we restrict ourselves to the real analytic category, while property a) still holds, the proofs for b) and c) given in \cite{ST} do not apply anymore. However, if $X$ is a weakly complete complex surface, it is possible to show that b) and c) hold for any plurisubharmonic exhaustion function and not just for the minimal ones (see Theorem \eqref{lmp}).

Thus we are able to link $\Sigma^1$ to the Levi-flatness of the levels of $\alpha$, obtaining the first half of the Structure Theorem, by studying the complex foliation induced by the degeneracy of the Levi form.

Then, we proceed to construct, for Grauert type surfaces, a proper pluriharmonic function.

The paper consists of six sections and an appendix.

In Section 2, we present some relevant examples of the more ``exotic'' of the three cases, the Grauert type surfaces. Section 3 is devoted to the study of the regular level sets of $\alpha$ which intersect $\Sigma^1$. We show that they are Levi-flat, provided that they do not contain any compact complex curve, and that the Levi foliation has dense leaves.

The effects of the presence of compact curves is analyzed in Section 4, where, using in a crucial way a theorem of Nishino \cite[III.5.B]{Ni}, we prove the Theorem \ref{MAR1}, i.e. the first part of the Structure Theorem, giving the classification of weakly complete surfaces into cases i), ii) and iii).

The proof of parts iii-a), iii-b) of the Structure Theorem is given in Sections 5,6 where we are dealing with Grauert type surfaces. Our goal is to construct a proper pluriharmonic function in the Structure Theorem. This is done analyzing the cases $\rdim\Crt(\alpha)\le 2$ and $\rdim\Crt(\alpha)=3$ separately (see Theorem \eqref{SCOTTISH} and  Theorem \ref{maint}).

The Appendix collects results of various nature which we believe either known or easy to proof, but for which we were not able to find appropriate references.

\medskip

We would like to thank Tadeusz Mostowski for directing us to the Sullivan's paper. 

\medskip

{\bf Acknoledgements.} The first author was supported by the ERC grant HEVO -- Holomorphic Evolution Equations n. 277691.

\section{Examples}\label{exm}
\begin{ex}
Let $a_1,a_2\in\C$ with the following properties
$$
0<\vert a_1\vert\le\vert a_2\vert<1 ,\>\>a_1^k\neq a_2^l 
$$
for all $(k,l)\in\N^2\smallsetminus\{(0,0)\}$ and define $\tau$ by $\vert a_1\vert=\vert a_2\vert^\tau$; by hypothesis $\tau\notin\Q$.

Consider on $\C^2\smallsetminus\{(0,0)\}$ the equivalence relation $\sim$: $(z_1,z_2)\sim (a_1z_1,a_2z_2)$. The quotient space $\C^2\smallsetminus\{(0,0)\}/\sim$ is the Hopf manifold $\mathcal H$. Let $\pi$ denote the projection $\C^2\smallsetminus\{(0,0)\}\rightarrow\mathcal H$. The complex lines $\C_{z_1}=\{z_2=0\}$, $\C_{z_2}=\{z_1=0\}$ project into complex compact curves $C_1$, $C_2$ respectively. Consider $X=\hcal\smallsetminus C_2$. The function
$$
\Phi(z_1,z_2)=\frac{\vert z_2\vert^{2\tau}}{\vert z_1\vert^2}
$$
on $\C^2\smallsetminus\{(0,0)\}$ is $\sim$-invariant and so defines a function $\phi:X\rightarrow\R_{\{\ge 0\}}$; $\phi$ is proper and $\log\,\phi$ is pluriharmonic on $X\smallsetminus C_1$.The level sets of $\phi$ contained in $X\smallsetminus C_1$ are the projections of the sets $\vert z_1\vert=c\vert z_2\vert^\tau$, $c>0$, and so foliated by the projections of the sets $ z_1=c{\rm e}^{i\theta} z_2^\tau$ which are everywhere dense leaves, $\tau$ being irrational. Observe that $C_1$ is the minimum set of $\phi$. In particular, $X$ is a weakly complete surface of Grauert type and falls into case iii-a.
\end{ex}
\begin{ex}
With the notation of the previous example, we consider $X_1=\mathcal{H}\setminus(C_1\cup C_2)$ with plurisubharmonic exhaustion function $\alpha=(\log\phi)^2$. $X_1$ is a weakly complete surface, obviously of Grauert type. Here, however, the plurisubharmonic function $\alpha_1$ has a 3-dimensional minimum set, namely the quotient of the Levi-flat surface of $(\C^*)^2$ given by
$$H_0=\{(z_1,z_2)\in(\C^*)^2\ :\ |z_2|^\tau=|z_1|\}\;.$$
The pluriharmonic function on $X_1$ is, obviously, $\log(\phi)$, i.e. a befitting choice of the square root of $\alpha_1$. Therefore, $X$ falls into case iii-b.
\end{ex}
\begin{ex}
Another class of example is provided by total spaces of some complex line bundles over compact Riemann
surfaces (see also \cite{Ued}).

Let $M$ be a compact Riemann surface of genus $g>0$. It is well known that every topologically trivial line bundle can be represented by a flat unimodular cocycle, i.e. an element of $H^1(M, S^1)$.

Consider a line bundle $L\to M$ with trivialization given by the open covering $\{U_j\}_{j=1}^n$ and transition functions $\{\xi_{ij}\}_{i,j}$ which represent a cocycle $\xi\in H^1(M,S^1)$. We can define a function $\alpha:L\to \R$ by defining it on each trivialization as $\alpha_j:U_j\times\C$, $\alpha_j(x,w)=|w|^2$. As $|\xi_{ij}|=1$, these functions glue into $\alpha:L\to\R$, which is readily seen to be plurisubharmonic and exhaustive.

Now, consider $r>0$ and the section $f_1\in\Gamma(U_1, \xi)$ given by $f(x)\equiv r$ for all $x\in U_1$; taking all possible analytic continuations of $f_1$ as a section of the bundle $L$, we construct , for every chain  $\{U_{j_k}\}_{k\in\N}$ with $j_0=1$ and $U_{j_{k}}\cap U_{j_{k+1}}\neq\varnothing$, the sections
$f_k\equiv\xi_{j_{k}j_{k-1}}\xi_{j_{k-1}j_{k-2}}\cdots\xi_{j_1j_0}r\in\Gamma(U_{j_k}, \xi)\;.$
Representing $\xi$ as a multiplicative homomorphism $\psi_\xi:\pi_1(M)\to S^1$, it is easy to see that the graphs of such sections glue into a compact complex manifold if and only if $\psi(\pi_1(M))$ is contained in the roots of unity, i.e. if and only if $L^{\otimes n}$ is (analytically) trivial for some $n$.

If that is not the case, the graphs of such sections glue into an imbedded, non closed, complex manifold, contained in  the Levi-flat hypersurface $\alpha^{-1}(r^2)$ and dense in it. The other leaves of the Levi foliation are obtained by the one constructed multiplying it by $e^{i\theta}$.

Finally, we have a pluriharmonic function $\chi:L\setminus M\to\R$ given by $\chi(p)=\log\alpha(p)$. 

Therefore, if $\xi$ is not unipotent, its total space gives an example of Grauert type surface for the case iii-a. As with the Hopf surface example, it is not hard to show that $L\setminus M$ is a Grauert type surface and falls in case iii-b.
\end{ex}
\section{Levels and defining functions}\label{levels}
\subsection{Some preliminary remarks}
From now on we suppose that $X$ is a fixed complex surface that admits a real analytic plurisubharmonic exhaustion function $\alpha$.
\begin{propos}\label{sorem}
We can assume $\alpha$ to have the following properties
\begin{equation}\label{Osric}
\begin{cases}
\min\limits_{X}\,\alpha=0&\\
\p\alpha(p)=0\>\>{\rm if}\>\>\p\bar\p\alpha(p)=0.&
\end{cases}\end{equation}
\end{propos}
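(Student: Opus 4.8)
The first condition is a harmless normalization. Since $\alpha$ is an exhaustion function its sublevel sets $\{\alpha\le c\}$ are compact, so $\alpha$ attains a global minimum $m$ on $X$; I would simply replace $\alpha$ by $\alpha-m$. This leaves $\p\alpha$, $\p\bar\p\alpha$, plurisubharmonicity, real-analyticity and the exhaustion property untouched, and achieves $\min_X\alpha=0$. The entire difficulty is thus concentrated in the second condition.

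For that, the plan is to compose $\alpha$ with a fixed strictly convex, strictly increasing, real-analytic function $\psi:\R\to\R$ satisfying $\psi(0)=0$ and $\psi(t)\to+\infty$, the concrete choice being $\psi(t)=e^{t}-1$, and to check that $\beta:=\psi\circ\alpha$ has the required property while retaining all the good features of $\alpha$ (composition with such a $\psi$ preserves real-analyticity, plurisubharmonicity and exhaustiveness, and keeps $\min_X\beta=0$ attained at the same points). It then suffices to verify the implication $\p\bar\p\beta(p)=0\Rightarrow\p\beta(p)=0$.

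The mechanism is the chain rule for the complex Hessian,
\[
\p\bar\p\beta=\psi'(\alpha)\,\p\bar\p\alpha+\psi''(\alpha)\,\p\alpha\wedge\bar\p\alpha,
\]
where the last summand is, up to the usual factor, the rank-one positive semidefinite Hermitian form $v\mapsto|\p\alpha(v)|^{2}$. Since $\alpha$ is plurisubharmonic we have $\p\bar\p\alpha\ge0$, and $\psi'>0$, so the first summand is positive semidefinite; the second is positive semidefinite with strictly positive weight $\psi''>0$. Evaluating on the tangent vector dual to $\p\alpha$ at a point $p$ where $\p\alpha(p)\neq0$, the two positive contributions cannot cancel and the gradient term alone gives a strictly positive value; hence $\p\bar\p\beta(p)\neq0$. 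Contrapositively, $\p\bar\p\beta(p)=0$ forces $\p\alpha(p)=0$, whence $\p\beta(p)=\psi'(\alpha(p))\,\p\alpha(p)=0$, which is exactly the second condition for $\beta$.

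The only point that genuinely needs care --- and what I regard as the (modest) main obstacle --- is ensuring that the rank-one term really survives, i.e.\ that the positive semidefinite curvature term $\psi'(\alpha)\,\p\bar\p\alpha$ cannot annihilate the strictly positive contribution of $\psi''(\alpha)\,\p\alpha\wedge\bar\p\alpha$ in the gradient direction. Because both forms are positive semidefinite no cancellation is possible, so the strict convexity of $\psi$ closes the argument; any strictly convex increasing real-analytic $\psi$ with $\psi(0)=0$ would serve equally well.
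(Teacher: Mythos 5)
Your proposal is correct and uses essentially the same approach as the paper: the paper's one-line proof replaces $\alpha$ by $(\alpha-\min_X\alpha)^2$, i.e.\ it composes the normalized function with the convex increasing function $t\mapsto t^2$, and the verification is exactly your chain-rule argument --- $\p\bar\p\beta$ splits into two positive semidefinite terms, so its vanishing forces the rank-one term $\p\alpha\wedge\bar\p\alpha$ to vanish, hence $\p\alpha(p)=0$ and then $\p\beta(p)=0$. Your choice $\psi(t)=e^{t}-1$ instead of $t\mapsto t^2$ is only a cosmetic variation of the same mechanism.
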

\begin{proof} It is enough to replace $\alpha$ with $(\alpha-\min\limits_X\,\alpha)^2$.
 \end{proof}

Let us fix some notations. Given a smooth function $f:W\to\R$ on a complex surface $W$ let us denote ${\rm Crt}(f)$ 
the set of its critical points. 
 If $f$ is real anlytic ${\rm Crt}(\alpha)$ is a real analytic set.

As anticipated in Section \ref{intr}, property b) of minimal functions extends to arbitrary smooth plurisubharmonic exhaustion functions: 
  \begin{teorema}\label{lmp}
 Let $\Sigma^1$ be the minimal kernel of $X$ and $u:X\to\R$ an arbitrary smooth plurisubharmonic exhaustion function. Then every non-empty
$$
\Sigma^1_u(c)=\{u=c\}\cap\Sigma^1
$$
has the local maximum property. 
\end{teorema}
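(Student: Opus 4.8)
The plan is to show that the local maximum property can fail at no point of $\Sigma^1_u(c)$, working locally and exploiting the fact that along $\Sigma^1$ \emph{every} plurisubharmonic function degenerates in one and the same complex direction; this is what lets us drop the minimality hypothesis present in property b). First I would record the basic reduction: since the local maximum property is local, I fix $p\in\Sigma^1_u(c)$ and a small ball $U\ni p$, take a function $v$ plurisubharmonic near $\overline U$, and aim to prove $\max_{\Sigma^1_u(c)\cap\overline U}v=\max_{\Sigma^1_u(c)\cap\partial U}v$; equivalently, that $v|_{\Sigma^1_u(c)}$ has no strict local maximum in the interior. By property a) I fix once and for all a \emph{minimal} plurisubharmonic exhaustion $\varphi$, so that $\Sigma^1=\Sigma^1_\varphi$.

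The key structural step, and the engine of the argument, is to single out a canonical complex direction on the regular part of $\Sigma^1$. At a point $p$ where $\partial\bar\partial\varphi(p)$ has rank exactly one, set $K_p=\ker\partial\bar\partial\varphi(p)$, a complex line. For any plurisubharmonic exhaustion $\psi$ one has $\Sigma^1\subseteq\Sigma^1_\psi$, so $\partial\bar\partial(\varphi+\psi)(p)$ is degenerate; being the sum of the positive semidefinite forms $\partial\bar\partial\varphi(p)$ (rank one, kernel $K_p$) and $\partial\bar\partial\psi(p)$, a short linear-algebra computation forces $\partial\bar\partial\psi(p)$ to annihilate $K_p$ as well. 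Taking $\psi=u$ shows that $u$ is \emph{leafwise harmonic}: its Levi form kills $K_p$. The distribution $p\mapsto K_p$ integrates (by property c), or directly from the real-analyticity of $\alpha$) to a foliation of the regular part of $\Sigma^1$ by Riemann surfaces $\ell$, along which $u|_\ell$ is harmonic, while any test function $v$ restricts to a \emph{subharmonic} function on each $\ell$ (this weaker fact being automatic and all that the final step needs).

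Next I would prove that $\Sigma^1_u(c)$ is \emph{saturated} by these leaves, i.e. that $u$ is constant on every leaf meeting it, and this is the step I expect to be the main obstacle. Leafwise harmonicity alone does not give constancy; one must use the conformal type of the leaves. On a compact leaf the maximum principle forces $u$ to be constant immediately. On the non-compact leaves of a Grauert-type situation the leaves are dense and recurrent and $u$ is leafwise harmonic and bounded on compact pieces of $\overline\ell$, so the point is to invoke a Liouville/parabolicity property of the leaves — tied precisely to the density of the leaves and to the real-analyticity of $\alpha$ — to conclude that $u|_\ell$ is constant. Granting this, $\Sigma^1_u(c)\cap\overline U$ is, on the regular part, a union of leaf-pieces.

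Finally I would conclude by the leafwise maximum principle. Let $q_0\in\Sigma^1_u(c)\cap\overline U$ be a point where $v$ attains its maximum over $\Sigma^1_u(c)\cap\overline U$, and let $\ell$ be the leaf through $q_0$. If $\ell$ is not compactly contained in $U$, then $v|_\ell$ is subharmonic and $q_0$ is an interior maximum on $\ell\cap\overline U$, so either $v$ is constant along the component of $\ell\cap\overline U$ through $q_0$ — which lets us slide to $\partial U$ without lowering the value — or we reach a contradiction; in either case the maximum is attained on $\Sigma^1_u(c)\cap\partial U$. Compact leaves (the compact complex curves appearing in case iii-a)) are treated directly, since a plurisubharmonic function obeys the maximum principle on a compact curve. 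It then remains to absorb the exceptional set — the points where $\partial\bar\partial\varphi$ vanishes, the singularities of the foliation, and the boundary of the regular locus — which form a closed set of smaller dimension; there one passes to the limit using the upper semicontinuity of $v$ and the density of the regular leaves, obtaining the desired equality on all of $\Sigma^1_u(c)$.
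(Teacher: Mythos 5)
Your reduction to a local statement and your linear-algebra step are both fine: at a point where the Levi form of a minimal $\varphi$ has rank one, degeneracy of $\partial\bar\partial(\varphi+\psi)$ does force every plurisubharmonic exhaustion $\psi$ to annihilate the line $K_p$. But the engine of your argument --- the ``saturation'' step, that $u$ is constant on every leaf meeting $\{u=c\}$ --- is a genuine gap, and you have correctly identified it as such without filling it. Leafwise harmonicity plus boundedness does not give constancy: a hyperbolic leaf carries plenty of nonconstant bounded harmonic functions, and nothing at this stage excludes hyperbolic leaves. The tools you invoke to close the gap are not available here: density of the leaves is Corollary \ref{Lucy2}, which rests on Theorem \ref{LUCY}, whose proof begins by invoking Theorem \ref{lmp} itself --- so your route is circular within the paper's logical order. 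There are secondary problems of the same kind: the foliation of $\Sigma^1\cap\{\varphi=c'\}$ by Riemann surfaces (property c), i.e. \cite[Lemma 4.1]{ST}) is available only for the \emph{minimal} function $\varphi$ at \emph{regular} values $c'$, whereas a point of $\Sigma^1_u(c)$ may lie on a critical level of $\varphi$; and the exceptional set you propose to ``absorb'' at the end (the rank-zero locus of $\partial\bar\partial\varphi$, the singularities of the putative foliation) carries no dimension bound when $\varphi$ is merely smooth, so upper semicontinuity of the test function $v$ cannot finish the argument.

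The paper's proof avoids all of this geometry and is purely formal, which is precisely what lets it work on arbitrary (including critical) levels and without real analyticity. Fixing a minimal $\phi$, one observes that $\psi=u+\phi$ is again minimal; Lemma \ref{lmp2} then shows that each slice $\{u=c\}\cap\{\phi=d\}\cap\Sigma^1$ has the local maximum property, by composing the pair $(\psi,\phi)$ with a strictly convex function $v$ to produce a minimal function $\chi$ (so that $Y_1=\{\chi=s\}\cap\Sigma^1$ has the local maximum property by \cite{ST}), and then cutting $Y_1$ down to the slice using the plurisubharmonic function $\eta=l(\psi,\phi)$ built from the tangent plane $l$ of $v$ at $(c,d)$, via Lemma \ref{lmp1}. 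Finally $\{u=c\}\cap\Sigma^1$ is the union over $d\in\R$ of these slices, and a closed union of sets with the local maximum property again has it. If you want a leaf-based proof, the inputs you need (Levi flatness, density of leaves) are exactly what the paper derives \emph{from} this theorem, which is why the order of the argument matters.
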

We first prove the following lemmas
\begin{lemma}\label{lmp1}
Let $Z$ be a complex manifold, $Y_1$ a closed subset with the local maximum property and $Y_0$ a closed subset of $Y_1$. Assume that $\phi:W\to[-\infty,+\infty)$ is a plurisubharmonic function $\phi$ in (a neighbourhood of) $Y_1$ such that $\phi=c$ on $Y_0$ and $\phi<c$ on $Y_1\setminus Y_0$. Then $Y_0$ has the local maximum property. 
\end{lemma}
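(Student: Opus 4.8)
The plan is to exploit that $Y_0$ is exactly the top level set of $\phi$ along $Y_1$, namely $Y_0=\{y\in Y_1:\phi(y)=c\}$ and $\phi<c$ on $Y_1\setminus Y_0$, and to transfer the local maximum property from $Y_1$ to $Y_0$ by a penalization-and-gluing argument. Since the local maximum property is local and expressed through the ``maximum on the boundary'' principle, I would fix a point of $Y_0$, a relatively compact open set $V\Subset Z$ with $\overline V\cap Y_0$ compact, and a plurisubharmonic function $u$ defined on a neighborhood $N$ of $\overline V\cap Y_0$. Writing $M=\max_{\overline V\cap Y_0}u$, I would argue by contradiction, assuming that this maximum is \emph{not} attained on the relative boundary $\partial V\cap Y_0$; by compactness this gives $\eta>0$ with $u\le M-2\eta$ on $\partial V\cap Y_0$, and the maximum $M$ is then reached only at some interior point $p\in V\cap Y_0$.

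The core of the argument is to manufacture, out of $u$ and $\phi$, a single plurisubharmonic function $G$ defined on a neighborhood of the \emph{whole} compact set $\overline V\cap Y_1$ and violating the maximum principle there. For $A>0$ large I would set $G=\max\bigl(u+A(\phi-c),\,\rho\bigr)$ near $Y_0$, with constant $\rho=M-\eta$, and glue it to the constant $\rho$ away from $Y_0$, so that
\[
G=\begin{cases}\max\bigl(u+A(\phi-c),\,\rho\bigr)&\text{where }\phi>c-\epsilon,\\[2pt]\rho&\text{where }\phi<c-\tfrac{\epsilon}{2},\end{cases}
\]
the two pieces agreeing on the overlap once $A$ is chosen large enough. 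The reason this is legitimate is a nested-compactness observation: the collars $\{\phi\ge c-\epsilon\}\cap\overline V\cap Y_1$ decrease, as $\epsilon\downarrow0$, to $\overline V\cap Y_0$, which lies in the open set $N$; hence for small $\epsilon$ these collars are contained in $N$, so $u$ (and therefore $u+A(\phi-c)$) is defined there, and on the compact overlap $\{c-\epsilon\le\phi\le c-\epsilon/2\}\cap\overline V\cap Y_1$ the term $A(\phi-c)\le -A\epsilon/2$ drives $u+A(\phi-c)$ below $\rho$ for $A$ large, making the $\max$ equal to $\rho$ and the gluing consistent. Thus $G$ is plurisubharmonic on a neighborhood of $\overline V\cap Y_1$.

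It remains to compare the two sides of the maximum principle for $G$ on $Y_1$. On $\overline V\cap Y_0$ we have $\phi=c$, so $G=\max(u,\rho)$ attains the value $M$ at the interior point $p$, giving $\sup_{\overline V\cap Y_1}G\ge M$. On $\partial V\cap Y_1$, instead, I claim $G\le M-\eta$: away from $Y_0$ this is the constant $\rho=M-\eta$, while the delicate case is that of boundary points of $Y_1\setminus Y_0$ accumulating on $Y_0$. Here the same nested-compactness principle applies to $\partial V\cap Y_1$: the sets $\{\phi\ge c-\epsilon\}\cap\partial V\cap Y_1$ shrink to $\partial V\cap Y_0$, where $u\le M-2\eta$, so by upper semicontinuity $u<M-\eta$ on $\{\phi\ge c-\epsilon\}\cap\partial V\cap Y_1$ for $\epsilon$ small; since $\phi\le c$ on $Y_1$ this forces $u+A(\phi-c)<M-\eta=\rho$, whence $G=\rho=M-\eta$ there as well. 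Therefore $\sup_{\partial V\cap Y_1}G=M-\eta<M\le\sup_{\overline V\cap Y_1}G$, contradicting the local maximum property of $Y_1$. This contradiction shows $M$ is attained on $\partial V\cap Y_0$, which is the local maximum property for $Y_0$. I expect the main obstacle to be exactly the control of $G$ on the boundary points of $Y_1\setminus Y_0$ clustering against $Y_0$ --- the place where $\phi$ offers no uniform gain --- which is why the two uses of the nested-compactness and continuity argument (to place the collars inside $N$ and to bound $u$ near $\partial V\cap Y_0$) are the technical heart of the proof.
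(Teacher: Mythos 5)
Your proposal is correct in substance, but it takes a genuinely different route from the paper's. The paper does not argue from the definition directly: it invokes Slodkowski's characterization of non-maximum points (\cite[Proposition 2.3]{Sl2}), which, if $Y_0$ failed the local maximum property, would produce a point $y\in Y_0$, a coordinate ball $B$ and a \emph{strongly} plurisubharmonic $u$ on $\overline{B}$ with $u(y)=0$ and $u(z)<-\epsilon\vert z-y\vert^2$ on $B\cap Y_0\setminus\{y\}$. Since that $u$ lives on the whole ball, the penalized function $v=u+m(\phi-c)$ is automatically defined near $\overline{B}\cap Y_1$, and the local maximum property of $Y_1$, applied to $v$ (which equals $0$ at $y$ but is strictly negative on ${\rm b}B\cap Y_1$ once $m\gg 0$), gives the contradiction at once --- no gluing is needed. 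You instead take an arbitrary competitor $u$ defined only near $\overline{V}\cap Y_0$, as the definition demands, and must therefore manufacture a plurisubharmonic function defined near all of $\overline{V}\cap Y_1$; your penalization $u+A(\phi-c)$ together with the $\max$-gluing against the constant $\rho=M-\eta$ accomplishes exactly this, and your two nested-compactness arguments are the right way to control the collar. What each approach buys: the paper's proof is shorter and inherits uniform quantitative decay from the cited characterization, at the price of relying on that external result; yours is self-contained, using only the definition of the local maximum property and the standard gluing lemma for plurisubharmonic functions. Two points in your writeup need routine repair. First, $\phi$ is only upper semicontinuous, so $\{\phi>c-\epsilon\}$ need not be open; the gluing must be set up on genuine open sets, e.g.\ take as first chart a relatively compact open neighbourhood $O_1$ of the compact collar $\{\phi\ge c-\epsilon/2\}\cap\overline{V}\cap Y_1$ with $\overline{O_1}\subset N$, and as second chart the open set $\{\phi<c-\epsilon/2\}$. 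Second, the estimate forcing $u+A(\phi-c)\le\rho$ on the overlap must use $\sup_{O_1}u<+\infty$ (upper semicontinuity on the compact $\overline{O_1}$), not merely a bound on $\overline{V}\cap Y_1$, since the gluing condition lives on an open region around $Y_1$, not on $Y_1$ itself. With these adjustments your argument is complete.
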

\begin{proof}
Should the local maximum property for $Y_0$ fail then, by \cite[Proposition 2.3]{Sl2} there are $y\in Y_0$ a coordinate ball $B=B(y;r)$ and a strongly plurisubharmonic function $u$ on $\oli B$ such that: $u(y)=0$ and $u(z)<-\epsilon\vert z-y\vert^2$, , if $z\in B\cap Y_0\setminus\{y\}$. In particular, there exist a neighbourhood $V$ of ${\rm b}B$ and a positive number $\epsilon$ such that $u(z)<-\epsilon\vert z-y_0\vert^2$ for $z\in B\cap Y_1\cap {\rm b}B$. Then for $m\gg0$ the plurisubharmonic function $v:=u+m(\phi-c)$ satisfies $v(y)=0$, $v(z)<-\epsilon\vert z-y\vert^2$ for ${\rm b}B\cap Y_1$.  
Therefore, since $Y_1$ has the local maximum property, this inequality must hold on $B\cap Y_1$, a contradiction for $z=y$.
 \end{proof}
\begin{lemma}\label{lmp2}
If $\psi:X\to\R$ is a smooth plurisubharmonic exhaustion function, $\phi:X\to\R$ a minimal function and $c$, $d$ real numbers such that the set
$$
Y_0=\big\{x\in X:x\in\Sigma^1, \psi(x)=c,\phi(x)=d\big\}
$$
is non-empty. Then $Y_0$ has the local maximum property.
\end{lemma}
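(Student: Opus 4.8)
The plan is to realize $Y_0$ not as a maximum set (it is a genuine level set of $\psi$, so neither Lemma \ref{lmp1} applied to $\psi$ nor any positive combination of the available plurisubharmonic functions can cut it out, since such combinations are one‑sided on a level set) but as a set \emph{saturated by the Levi leaves} of $\Sigma^1$, and then to invoke the disc criterion for the local maximum property.

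First I would exploit a tilting trick: $\phi_\epsilon:=\phi+\epsilon\psi$ is again a minimal function for every $\epsilon\ge 0$. Indeed $\phi_\epsilon$ is a plurisubharmonic exhaustion which is strictly plurisubharmonic wherever $\phi$ is, so $\Sigma^1_{\phi_\epsilon}\subseteq\Sigma^1_\phi=\Sigma^1$; conversely $\Sigma^1\subseteq\Sigma^1_\psi$, so on $\Sigma^1$ neither $\phi$ nor $\psi$ is strictly plurisubharmonic and hence $\Sigma^1\subseteq\Sigma^1_{\phi_\epsilon}$, giving $\Sigma^1_{\phi_\epsilon}=\Sigma^1$. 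By property b) applied to $\phi$ and to $\phi_\epsilon$, the level sets $\Sigma^1_\phi(d)$ and $\Sigma^1_{\phi_\epsilon}(e)$ all have the local maximum property, and one checks at once that $Y_0=\Sigma^1_\phi(d)\cap\Sigma^1_{\phi_\epsilon}(d+\epsilon c)$ for every $\epsilon>0$.

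Next I would compare the Levi foliations. If $v$ is a null vector of $\p\bar\p\phi_\epsilon$ at a point of $\Sigma^1$, then $0=v^{*}(\p\bar\p\phi)\,v+\epsilon\,v^{*}(\p\bar\p\psi)\,v$ with both summands $\ge 0$, so $v$ is simultaneously a null vector of $\p\bar\p\phi$ and of $\p\bar\p\psi$. Since on a surface the Levi‑null space of the minimal function $\phi$ is, at its regular points, a single complex line, the Levi‑null distributions of $\phi$, $\psi$ and $\phi_\epsilon$ coincide along $\Sigma^1$. By property c), at regular values the sets $\Sigma^1_\phi(d)$ and $\Sigma^1_{\phi_\epsilon}(e)$ are compact and foliated by Riemann surfaces tangent to this common distribution, hence share their leaves; along a common leaf $L$ both $\phi$ and $\phi_\epsilon$ are constant, so $\psi=\epsilon^{-1}(\phi_\epsilon-\phi)$ is constant on $L$. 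Consequently $\psi$ is constant along the Levi leaves of $\Sigma^1$, and $Y_0=\{\phi=d,\ \psi=c\}\cap\Sigma^1$ is saturated: through each of its regular points passes a Riemann‑surface leaf entirely contained in $Y_0$.

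Once $Y_0$ is saturated by complex leaves I would conclude by the disc criterion: if through a point $y$ of a closed set there passes a non‑constant holomorphic disc contained in the set, then any plurisubharmonic $u$ restricts to a subharmonic function on that disc, and the sub‑mean value inequality prevents $y$ from being a strict local maximum; by \cite[Proposition 2.3]{Sl2} this is exactly the local maximum property. The main obstacle is the passage from regular to arbitrary values and points: property c) provides the foliation only at regular values $d,e$ and away from $\Crt(\phi_\epsilon)$, so the constancy of $\psi$ along leaves must be propagated to the exceptional fibres and to the singular locus of $\Sigma^1$, where no leaf need exist; I expect to treat these residual points by approximation, the local maximum property being closed under the relevant limits, or locally by falling back on Lemma \ref{lmp1}, using that $\phi_\epsilon-(d+\epsilon c)$ is plurisubharmonic on $\Sigma^1_\phi(d)$ and vanishes precisely on $Y_0$. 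The delicate analytic point is justifying that the Levi‑null distribution is genuinely one‑dimensional and intrinsic along $\Sigma^1$, so that the various minimal foliations do share their leaves.
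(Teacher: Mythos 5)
Your proposal rests on a false premise, and the alternative route it takes has a genuine gap. The opening claim --- that since $Y_0$ lies on a \emph{level} set of $\psi$, no positive combination of the available plurisubharmonic functions can cut it out one-sidedly, so Lemma \ref{lmp1} is unusable --- is precisely what the paper's proof refutes. The paper composes the pair $(\psi,\phi)$ with a strictly convex function $v$, e.g. $v(x,y)=(x-L)^2+(y-K)^2$ with $L<\min\psi$, $K<\min\phi$. Then $\chi=v(\psi,\phi)$ is again a minimal exhaustion (its complex Hessian dominates a positive multiple of ${\rm d}{\rm d^c}\phi$), so $Y_1=\{\chi=v(c,d)\}\cap\Sigma^1$ has the local maximum property; and the tangent plane $l$ to the graph of $v$ at $(c,d)$ gives the plurisubharmonic function $\eta=l(\psi,\phi)$ (a positive linear combination of $\psi$ and $\phi$ plus a constant), which by strict convexity satisfies $\eta\le\chi$ everywhere with equality exactly where $(\psi,\phi)=(c,d)$. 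Thus on $Y_1$ one has $\eta\le v(c,d)$ with equality precisely on $Y_0$, and Lemma \ref{lmp1} applied to $Y_0\subset Y_1$ with $\eta$ concludes, for \emph{arbitrary} $c,d$. Strict convexity is what converts the two-sided constraint $\psi=c$ into a one-sided inequality; this is the missing idea that your proposal declares impossible.

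By contrast, the leaf-saturation argument cannot be completed as sketched, and it breaks exactly where you flag it. Property c) of \cite{ST} requires $d$ to be a regular value of $\phi$ and $d+\epsilon c$ a regular value of $\phi_\epsilon$, while the lemma is needed for all values (it feeds into Theorem \ref{lmp}, where $c,d$ range over $\R$); a critical value $d$ of $\phi$ cannot be removed by varying $\epsilon$. Neither fallback repairs this. Approximation fails because the sets $\Sigma^1\cap\{\phi=d_n\}\cap\{\psi=c\}$, $d_n\to d$ regular, have Hausdorff limsup \emph{contained in} $Y_0$ but possibly omitting points of $Y_0$, and the local maximum property of a proper closed subset says nothing about $Y_0$ itself. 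The fallback through Lemma \ref{lmp1} with $\phi_\epsilon-(d+\epsilon c)$ on $Y_1=\Sigma^1_\phi(d)$ fails for the very reason you state at the outset: on $Y_1$ that function equals $\epsilon(\psi-c)$, which changes sign across $Y_0$, so the hypothesis ``$<c$ on $Y_1\setminus Y_0$'' of Lemma \ref{lmp1} is violated. (A smaller soft spot: at points of $\Sigma^1$ where ${\rm d}{\rm d^c}\phi$ vanishes identically, the Levi-null space is two-dimensional and ``the'' common line is undefined.) Your tilting observation that $\phi+\epsilon\psi$ is minimal is correct --- it is the same first step as the paper's proof of Theorem \ref{lmp} --- and your use of \cite[Proposition 2.3]{Sl2} as a disc criterion is sound, but after the tilting the paper proceeds through convexity rather than foliations, and that is what makes the argument work at critical values.
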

\begin{proof}
Assume that $v:\R^2\to\R$ is a smooth, strictly convex function such that for some numbers $K$, $L$ the functions $v_x$, $v_y$ are positive on the set
$$
\big\{(x,y)\in \R^2:x\ge K, y\ge L\big\}
$$
and that $\min\psi>K$, $\min\phi>L$. Let $\chi(z)=v\big(\psi(z),\phi(z)\big)$, $z\in X$. Then $\chi$ is a minimal function. Indeed
$$
{\rm d}{\rm d^c}\chi=\left[ \begin{array}{cc}
	v_{xx}&v_{xy}\\
	v_{yx}&v_{yy}
\end{array} \right]\left[ \begin{array}{c} {\rm d}\phi\\{\rm d}\psi\end{array} \right]\wedge\left[ \begin{array}{c} {\rm d^c}\phi\\{\rm d^c}\psi\end{array} \right]+v_x{\rm d}{\rm d^c}\phi+v_y{\rm d}{\rm d^c}\psi
$$
Then ${\rm d}{\rm d^c}\chi$ is strictly positive definite at all points where ${\rm d}{\rm d^c}\phi$ is and so $\chi$ is minimal.

If $K$, $L$ are fixed numbers such that $\min\psi>K$, $\min\phi>L$ we can take $v(x,y)=(x-L)^2+(y-K)^2$. Fix such a $v$ and let $\chi=v(\psi,\phi)$. Then $\chi$ is minimal. Let now $s=v(c,d)$. Then $\chi_{|Y_0}=s$. Consider 
$$
Y_1=\big\{z\in X:z\in\Sigma^1, \chi(z)=s\big\}.
$$
By \cite[Lemma 3.1]{ST} $Y_1$ has the local maximum property. Take now a linear function $l:\R^2\to\R$ which defines the tangent plane to $v$ at the point $(c,d,s)\in\R^3$. It has the properties
\begin{itemize}
\item[] $l(c,d)=s=v(c,d)$
\item[] $l(x,y)< v(x,y)$ for $(x,y)\neq(c,d).$
\end{itemize}
Let $\eta(z)=l\big(\psi(z),\phi(z)\big)$, $z\in X$. Then $\eta$ is a minimal function as well and $\eta_{|Y_0}=s$. However, if $z\in Y_1\setminus Y_0$, 
$\big(\psi(z),\psi(z)\big)\neq(c,d)$ and then
$$
\eta(z)=l\big(\psi(z),\psi(z)\big)<v\big(\psi(z),\psi(z)\big)=\chi(z).
$$
So, $\eta$ is a plurisubharmonic function on $Y_1$, $\max\limits_{Y_1}\eta=s$ while
$$
\big\{z\in Y_1:\eta(z)=s\big\}=Y_0.
$$
Now Lemma \ref{lmp1} implies that $Y_0$ is a local maximum set.
\end{proof} 
\begin{proof}[Proof of Theorem \ref{lmp}] If $\phi$ is a minimal function then $\psi:=u+\phi$ is also minimal. Let $c,d$ be real constants. By Lemma \ref{lmp2} the following set
$\{\psi=c+d\}\cap\{\phi=d\}$ has the local maximum property if non-empty; since
\begin{eqnarray*}
&\{u=c\}\cap\{\phi=d\}\cap\Sigma^1=&\\
&\{\psi=c+d\}\cap\{\phi=d\}\cap\Sigma^1
\end{eqnarray*}
it follows that $\{u=c\}\cap\{\phi=d\}\cap\Sigma^1$ has the local maximum property if non-empty. 

But for fixed $c$ 
$$
\{u=c\}\cap\Sigma^1=\bigcup_{d\,\in\,\R}\{u=c\}\cap\{\phi=d\}\cap\Sigma^1
$$
and so it has the local maximum property as union of sets with the local maximum property (since it is closed).  
\end{proof}
In the sequel, the function $\alpha$ is assumed to satisfy the condition \eqref{Osric}.
 \subsection{Levi flat levels}
Let $W$ be a complex surface. In the sequel by {\em complex curve} of $W$ we mean a purely $1$-dimensional immersed complex space $C\hookrightarrow W$. We observe that a complex curve lying on a smooth hypersurface of $W$ must be regular (cfr. \cite[Proposition 4.2]{ST}). If $Y$ is the zero set of a smooth function $f:W\to\R$ or an analytic subset of $W$ we denote $Y_{\textrm {reg}}$ the regular part of $Y$, i.e.
$$Y_{\textrm{reg}}=\{z\in Y\ :\ \mathrm{d}f(z)\neq 0\}\;.$$

If $\Sigma^1\neq\varnothing$, according to \cite[Lemma 4.1]{ST}, some levels of the minimal function $\phi$ contain complex curves. We can expect that the same is true for the function $\alpha$. The results of this subsection go in this direction.

\begin{propos}\label{Jenny}
Let $X$ be a complex surface with a real analytic plurisubharmonic exhaustion function $\alpha:X\to\R$ and $Y$ a connected component of a level set $\{\alpha=c\}$ such that $Y\cap \Sigma^1\neq\varnothing$. Then, for every point $p\in Y_{\textrm{reg}}\cap \Sigma^1$ there exist an open neighbourhood $U\subset X$ and coordinates $z,w$ on $U$ such that $U\cong\D_z\times \D_w$ and
$$
U\cap Y\cap\Sigma^1\cong\bigcup_{t\in T}\{(z,f_t(z))\ \vert\ z\in\D_z\}
$$
where each $f_t:\D_z\to\D_w$ is a holomorphic function.
\end{propos}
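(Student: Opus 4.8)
The plan is to pin down the complex Hessian of $\alpha$ at $p$, to show that the level $M:=\{\alpha=c\}$ is Levi-degenerate there with the complex tangent line as null direction, and then to integrate that null direction into the holomorphic graphs. First I would compute the rank of $\p\bar\p\alpha(p)$. Since $p\in Y_{\textrm{reg}}$ we have $\mathrm d\alpha(p)\neq 0$, equivalently $\p\alpha(p)\neq 0$; the contrapositive of the second condition in \eqref{Osric} then gives $\p\bar\p\alpha(p)\neq 0$. As $\p\bar\p\alpha\ge 0$ by plurisubharmonicity while $p\in\Sigma^1$ forces degeneracy, the rank at $p$ is exactly $1$: there is a line $\C v_p=\ker\p\bar\p\alpha(p)$ and $\p\bar\p\alpha(p)$ is positive on a complementary line. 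I would then choose holomorphic coordinates $(z,w)$ centred at $p$, $U\cong\D_z\times\D_w$, so that $\p/\p z$ spans the complex tangent line $H_pM=\ker\p\alpha(p)$ and $\p\alpha/\p w(p)\neq 0$.

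The decisive point is to locate $\C v_p$ relative to $H_pM$, and here the local maximum property enters through Theorem \ref{lmp}: the set $K:=Y\cap\Sigma^1$ inherits it from $\{\alpha=c\}\cap\Sigma^1$. Suppose $v_p\notin H_pM$. Then $\p\bar\p\alpha(p)$ is strictly positive on $H_pM$, that is, $M$ is strictly pseudoconvex at $p$ from the pseudoconvex side $\{\alpha<c\}$. Taking the standard holomorphic support function at a strictly pseudoconvex boundary point I would obtain $h\in\mathcal O(U)$ with $\Re h(p)=0$ and $\Re h<0$ on $\overline{\{\alpha<c\}}\setminus\{p\}$ near $p$; since $K\subset M\subset\overline{\{\alpha<c\}}$, the pluriharmonic function $\Re h$ would attain a strict local maximum on $K$ at $p$, contradicting the local maximum property (this is the mechanism of \cite[Proposition 2.3]{Sl2} already exploited in Lemma \ref{lmp1}). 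Hence $H_pM=\C v_p=\ker\p\bar\p\alpha(p)$ and the Levi form of $M$ vanishes at $p$; the same computation applies at every point of $Y_{\textrm{reg}}\cap\Sigma^1$ close to $p$.

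It remains to integrate this null direction into complex curves and to recognise $K$ as a union of graphs, and this is the \emph{main obstacle}. The Levi form of $M$ is a real-analytic function on the smooth hypersurface $M$ which vanishes on $K$, so if $K$ has non-empty interior in $M$ the identity principle makes $M$ Levi-flat near $p$, and the real-analytic normal form for Levi-flat hypersurfaces presents $M$, in coordinates $\D_z\times\D_w$, as $\{\Im w=0\}$, foliated by the graphs $z\mapsto(z,t)$, $t\in\R$. When $K$ is thinner the pointwise degeneracy above no longer suffices, and I would instead produce the curves from property c), i.e. \cite[Lemma 4.1]{ST}: through $p$ there passes a Riemann-surface leaf $\ell\subset\Sigma^1$ of the foliation it provides. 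Since $Y$ is a compact level of the exhaustion $\alpha$, the leaf $\ell$ is relatively compact, so $\alpha|_\ell$ is a bounded subharmonic function on $\ell$; the maximum principle, together with continuity onto $\overline\ell$ when the leaf is dense, forces $\alpha|_\ell\equiv\alpha(p)=c$, whence $\ell\subset\{\alpha=c\}\cap\Sigma^1$.

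Thus through each point of $K$ near $p$ there runs a complex curve contained in $K$ and tangent to $\C v_p=\p/\p z$; by the transversality of the $w$-direction, each such curve is, after shrinking $U$, the graph $w=f_t(z)$ of a holomorphic $f_t:\D_z\to\D_w$. I would finally verify that $K$ is saturated by these leaves, so that $U\cap Y\cap\Sigma^1$ is exactly their union $\bigcup_{t\in T}\{(z,f_t(z))\}$, as claimed. I expect the genuine difficulty to be concentrated in the passage from the pointwise Levi-degeneracy of $M$ along $K$ to an honest local foliation of $K$ by complex curves, since the local maximum property by itself does not exclude isolated points of $\Sigma^1$ on a strictly pseudoconvex level, and it is precisely the analytic structure of $\Sigma^1$ coming from \cite{ST} that must be brought in to rule this out.
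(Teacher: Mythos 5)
Your preliminary analysis is correct as far as it goes: using \eqref{Osric} and $p\in\Sigma^1$ to pin the rank of $\p\bar\p\alpha(p)$ at exactly one, and ruling out strict pseudoconvexity of the level at points of $Y_{\textrm{reg}}\cap\Sigma^1$ by pairing a holomorphic support function with the local maximum property supplied by Theorem \ref{lmp}, are both legitimate. But this yields only pointwise, infinitesimal degeneracy, and the proposition \emph{is} the statement that this degeneracy integrates to honest holomorphic graphs filling $U\cap Y\cap\Sigma^1$; it is exactly at this step that your argument breaks down. The case where $Y\cap\Sigma^1$ has interior in the level is fine but is not the hard case. In the ``thin'' case you fall back on \cite[Lemma 4.1]{ST}, but that lemma concerns a \emph{minimal} function $\phi$ and a \emph{regular value} of $\phi$: nothing guarantees that $\phi(p)$ is a regular value, and, more seriously, the leaf $\ell$ it produces lies in $\{\phi=\phi(p)\}\cap\Sigma^1$, not in $\{\alpha=c\}$. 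Your transfer argument --- $\alpha|_\ell$ bounded subharmonic on a relatively compact leaf, hence constant ``by the maximum principle together with continuity onto $\overline\ell$ when the leaf is dense'' --- does not work: a bounded subharmonic function on a non-compact Riemann surface need not be constant (the maximum of $\alpha$ on $\overline\ell$ may be attained only on $\overline\ell\setminus\ell$), and density of leaves is not available at this stage; in the paper it is a \emph{consequence} of this proposition (via Theorem \ref{LUCY} and Corollary \ref{Lucy2}), so invoking it here is circular. Finally, even granting a leaf of $\Sigma^1$ through each point, the claim that the union of the resulting graphs is exactly $U\cap Y\cap\Sigma^1$ is asserted, not proved.

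What is actually needed --- and what the paper does --- is shorter and different in its key tool. Theorem \ref{lmp} shows that $Y\cap\Sigma^1$ has the local maximum property for the \emph{arbitrary} real analytic exhaustion $\alpha$ (this is the only new ingredient compared with \cite{ST}), after which the proof of \cite[Lemma 4.1]{ST} repeats verbatim. The engine of that proof is not pointwise Levi degeneracy but Shcherbina's theorem \cite{SCH} on hulls of graphs: since $p\in Y_{\textrm{reg}}$, one writes $\{\alpha=c\}$ near $p$ as a graph $\{v=h(z,u)\}$ over $\C_z\times\R_u$, and a closed subset of such a graph with the local maximum property is a union of graphs of holomorphic functions $w=f_t(z)$. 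This is precisely the passage from degeneracy to foliation that you yourself identify as the main obstacle; without a tool of this kind your proof cannot be completed. (Incidentally, your closing remark that ``the local maximum property by itself does not exclude isolated points'' is false: a local maximum set can have no isolated points, a fact the paper uses explicitly in the proof of Theorem \ref{LUCY}.)
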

\begin{proof}
The proof repeats verbatim from \cite[Lemma 4.1]{ST} once we observe that, by Theorem \ref{lmp}, the set $\Sigma^1\cap Y$ has the local maximum property.
\end{proof}

\begin{teorema}\label{LUCY}
Let $X$ be a complex surface with a real analytic plurisubharmonic exhaustion
function  $\alpha:X\to \R$ and $Y$ a connected component of a level set
$\{\alpha=c\}$ such that $Y\cap\Sigma^1\neq\varnothing$. Assume that $Y$ contains, at worst, isolated critical points of $\alpha$ and no compact complex curve. Then
\begin{itemize} 
\item[a)] $Y_{\textrm {reg}}$ is a real analytic Levi flat hypersurface. 
\end{itemize}
In general, if $Y$ does not contain local minimum points of $\alpha$ and $Y\subset\Sigma^1$ then
$$
\p\alpha\wedge{\bar\p}\alpha\wedge\p{\bar\p}\alpha=0,\>\p{\bar\p}\alpha\wedge\p{\bar\p}\alpha=0
$$
on $X$ and so 
\begin{itemize}
\item[b)] $\Sigma^1=X$,  all non-critical level sets of $\alpha$ are
compact Levi flat hypersurfaces and the regular parts of the critical level sets
have complex foliation.
\end{itemize}
\end{teorema}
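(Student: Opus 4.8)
The plan is to read the complex foliation of $Y$ off Proposition \ref{Jenny}, translate it into the vanishing of the Levi form of $\alpha$ along the complex tangent directions, and then spread this vanishing to all of $Y_{\textrm{reg}}$ (and, in part b), to all of $X$) by the identity principle for real analytic functions. Note first that $Y$ is compact, being a connected component of a level set of the exhaustion $\alpha$. On $Y_{\textrm{reg}}$ the complex tangent distribution $T^\C Y=TY\cap JTY$ is a real analytic complex line field, and I would work with the nonnegative real analytic function $\lambda$ on $Y_{\textrm{reg}}$ obtained by evaluating $\p\bar\p\alpha$ on a local generator of $T^\C Y$; since $\alpha$ is plurisubharmonic $\lambda\ge 0$, and $Y_{\textrm{reg}}$ is Levi flat precisely on $\{\lambda=0\}$. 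The first, local, step: by Proposition \ref{Jenny}, through each $p\in Y_{\textrm{reg}}\cap\Sigma^1$ passes a holomorphic graph contained in $Y\cap\Sigma^1$, that is a complex curve $C\subset Y$ with $\alpha|_C\equiv c$; then $T_pC$ and $T_p^\C Y$ are both complex lines in $T_pY$, hence equal, and $\p\bar\p\alpha$ vanishes along $T_pC=T_p^\C Y$. Thus $\lambda\equiv 0$ on $Y_{\textrm{reg}}\cap\Sigma^1$.

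The main obstacle is the second step of part a): propagating $\lambda\equiv 0$ from $Y_{\textrm{reg}}\cap\Sigma^1$ to all of $Y_{\textrm{reg}}$. This is where the hypotheses enter decisively. Because $Y$ contains no compact complex curve while being compact, each leaf $C$ is non-compact and its closure $\oli C\subset Y$ must accumulate, and $Y\cap\Sigma^1$ has the local maximum property by Theorem \ref{lmp}, which controls these accumulations. The strategy is to show that the accumulating, saturated leaves cannot be confined to a proper real analytic subset of a component of $Y_{\textrm{reg}}$: otherwise a boundary point would be a limit of leaves lying in the closed set $\{\lambda=0\}$, forcing $\lambda$ to vanish on an open set and then, by the identity principle, on the whole component. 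I expect the careful verification that the leaves fill (or are dense in) $Y_{\textrm{reg}}$---rather than sweeping only a lower dimensional subset---to be the delicate point, and exactly the place where compactness of $Y$ and the absence of compact curves are indispensable. Granting it, $\lambda\equiv 0$ and $Y_{\textrm{reg}}$ is a real analytic Levi flat hypersurface, which is a).

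For part b) assume $Y\subset\Sigma^1$ and that $Y$ carries no local minimum of $\alpha$, so that $\alpha$ genuinely changes sign across $Y$. Now $Y=Y\cap\Sigma^1$, hence by the first step $Y_{\textrm{reg}}$ is Levi flat, and moreover $\delta:=\det(\p\bar\p\alpha)$ vanishes on $Y$ since $Y\subset\Sigma^1\subset\{\delta=0\}$; invariantly this says $\p\bar\p\alpha\wedge\p\bar\p\alpha=0$ and $\p\alpha\wedge\bar\p\alpha\wedge\p\bar\p\alpha=0$ on $Y$, the two members being nonnegative real analytic $(2,2)$-forms on $X$. The analytic heart of the argument is to upgrade this to infinite order vanishing along $Y$: writing $Y=\{v=0\}$ in adapted Levi flat coordinates in which the leaf direction lies in the kernel of $\p\bar\p\alpha$ along $Y$, the two-sided sign condition $\delta\ge 0$ first forces $\delta=O(v^2)$, and I would then bootstrap, alternating the positivity of $\delta$ with the foliated normal form, to kill all derivatives of the two forms along $Y$. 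Vanishing to infinite order at a point of the connected real analytic manifold $X$ then yields $\p\bar\p\alpha\wedge\p\bar\p\alpha\equiv 0$ and $\p\alpha\wedge\bar\p\alpha\wedge\p\bar\p\alpha\equiv 0$ on $X$.

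Finally I would read off the consequences. From $\p\bar\p\alpha\wedge\p\bar\p\alpha\equiv 0$ the Levi form of $\alpha$ has rank $\le 1$ everywhere, and from $\p\alpha\wedge\bar\p\alpha\wedge\p\bar\p\alpha\equiv 0$ every regular level $\{\alpha=c\}$ is Levi flat; such levels are compact because $\alpha$ is an exhaustion, and the regular part of each critical level inherits the complex foliation given by the kernel of $\p\bar\p\alpha$. To conclude $\Sigma^1=X$, take an arbitrary plurisubharmonic exhaustion $\varphi$: since each Levi flat level of $\alpha$ is the closure of a dense complex leaf, the restriction of $\varphi$ to the leaf is subharmonic and, being continuous up to the compact level, must be constant on that level; hence $\varphi=g\circ\alpha$ for some function $g$, and $\p\bar\p\varphi=g'(\alpha)\,\p\bar\p\alpha+g''(\alpha)\,\p\alpha\wedge\bar\p\alpha$ has in its kernel the common kernel direction of $\p\bar\p\alpha$ and $\p\alpha\wedge\bar\p\alpha$, so $\varphi$ is nowhere strictly plurisubharmonic. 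As $\varphi$ was arbitrary, $\Sigma^1=X$.
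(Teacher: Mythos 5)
Your local step (reading $\lambda\equiv 0$ on $Y_{\textrm{reg}}\cap\Sigma^1$ off Proposition \ref{Jenny}) agrees with the paper, but both halves of your argument break down exactly where the real work lies. In part a) the propagation of $\lambda\equiv 0$ to all of $Y_{\textrm{reg}}$ is left as a hope (``granting it''), and your heuristic for it is unsound: leaves accumulating at a point only place that point in the closed set $\{\lambda=0\}$; they do not force that zero set to have interior in $Y_{\textrm{reg}}$. The paper's actual mechanism is a dichotomy on the local picture of Proposition \ref{Jenny}. Either some $p\in Y_{\textrm{reg}}\cap\Sigma^1$ has infinitely many of the disjoint holomorphic graphs in every neighbourhood of a fundamental system, and then real analyticity (a proper real analytic subset of the $3$-manifold $Y_{\textrm{reg}}$ cannot contain infinitely many disjoint $2$-dimensional graphs accumulating at $p$) forces $\p\bar\p\alpha\wedge\p\alpha\wedge\bar\p\alpha\equiv 0$ on $Y_{\textrm{reg}}$; or only finitely many graphs pass through each point, and then analytic continuation of one graph produces a complex curve $C$ embedded in $Y_{\textrm{reg}}$ whose closure in $Y$ is a \emph{compact} complex curve, by Remmert--Stein, using that the critical points of $\alpha$ in $Y$ are isolated --- contradicting the no-compact-curve hypothesis. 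This dichotomy is the one place where ``no compact complex curve'' and ``isolated critical points'' are used; your sketch never invokes either operationally.

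In part b) your plan --- upgrade the vanishing of $\p\bar\p\alpha\wedge\p\bar\p\alpha$ and $\p\alpha\wedge\bar\p\alpha\wedge\p\bar\p\alpha$ on the $3$-dimensional set $Y$ to infinite-order vanishing by ``alternating positivity with the foliated normal form'' --- cannot work: a nonnegative real analytic function can vanish on a hypersurface exactly to second order (e.g.\ $v^2$), so nonnegativity plus vanishing on $Y$ never forces flatness, and the identity principle on the $4$-manifold $X$ requires vanishing on a set with nonempty interior in $X$, which $Y$ does not have. The paper manufactures such an open set from an external result: since $Y\subset\Sigma^1$ has the local maximum property (Theorem \ref{lmp}) and contains no local minimum of $\alpha$, \cite[Theorem 3.9]{ST} yields $s<c$ such that the connected component $K$ of $\{s\le\alpha\le c\}$ containing $Y$ has nonempty interior and all the relevant forms vanish on $K\smallsetminus Y$; only then does real analyticity propagate the identities to all of $X$, which is what yields $\Sigma^1=X$. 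Finally, your closing argument for $\Sigma^1=X$ is circular at this stage of the paper: it assumes each Levi flat level is the closure of a dense leaf, but density of leaves is proved only later (Corollary \ref{Lucy2}, via Sacksteder's theorem) and itself needs the absence of compact leaves, which is not a hypothesis in part b).
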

\begin{proof}
Since, by Theorem \ref{lmp}, $Y\cap\Sigma^1\neq\varnothing$ has the local maximum property it cannot have isolated points hence $Y_{\textrm {reg}}\cap\Sigma^1\neq\varnothing$.  In view of Proposition \ref{Jenny}, every point $p$ of $Y_{\textrm {reg}}\cap\Sigma^1$ has a neighbourhood $U$ in which $Y\cap \Sigma^1$ can be expressed as a union of analytic discs:
$$
U\cap Y\cap\Sigma^1\cong\bigcup_{t\in T(p)}\{(z,f_t(z))\ \vert\ z\in\D_z\}
$$
where each $f_t:\D_z\to\D_w$ is a holomorphic function. Let us first suppose that there is a point $p$ such that $T(p)$ is infinite for a fundamental system of neighborhoods of $p$, take $U$ one of such neighborhoods and let 
$$
U\cap Y\cap \Sigma^1\supset\bigcup_{n\in\N}\{(z,f_n(z))\ \vert\ z\in\D_z\}
$$
with $f_n:\D_z\to\D_w$ a countable family of holomorphic functions with disjoint graphs. On every such graph we have that $\p{\bar\p}\alpha$ degenerates and, being those graphs integral curves for the kernel of such Levi form, we actually have that
$$\p{\bar\p}\alpha\wedge \p\alpha\wedge{\bar\p}\alpha=0$$
on each of those graphs. By analyticity, $\p{\bar \p}\alpha\wedge\p\alpha\wedge{\bar\p}\alpha=0$ on the whole of $Y_{\textrm{reg}}$, thus giving a foliation of it in complex curves, hence Levi flatness.

Suppose, at contrary, that for every point $p\in Y_{\textrm {reg}}\cap\Sigma^1$ there is only a finite number of those analytic disks composing $Y_{\textrm {reg}}\cap\Sigma^1$ which pass through $p$; then it is not hard to see that, starting from one of such disks, by analytic continuation, we get a complex curve $C$ embedded in $Y_{\textrm {reg}}$. The closure $\oli C$ of $C$ in $Y$ is a compact complex curve: this is clear if $\oli C\subset Y_{\textrm {reg}}$ otherwise we apply Remmert-Stein theorem (cfr. \cite[Chapter VII, Theorem 1]{N}) the critical points in $Y$ being isolated. Since by hypothesis $Y$ has no compact complex curve the only possible case is the previous one. This shows part a).

Assume now that $Y\subset\Sigma^1$ does not contain local minimum points of $\alpha$. Then, by Theorem \ref{lmp} $Y$ has the local maximum property and so \cite[Theorem 3.9]{ST} applies: there is an $s<c$ such that, if $K$ is the connected component of the set $\{s\le\alpha\le c\}$
containing $Y$, then its topological boundary ${\rm b}K$ is contained in $\{\alpha=s\}\cup
K$ and furthermore $\Int{K}$ is nonempty, the forms 
\begin{itemize}
\item $(\p\bar\p\alpha)\wedge
\p\alpha\wedge \bar\p\alpha$
\item $(\p\bar\p\alpha)\wedge \p\alpha$
\item $(\p\bar\p\alpha)\wedge \bar\p\alpha$
\item $\p{\bar\p}\alpha\wedge\p{\bar\p}\alpha$ 
\end{itemize}
vanish on $K\smallsetminus Y$.
Now $K\smallsetminus Y$ has nonempty interior in $X$, and so
$\p{\bar\p}\alpha\wedge\p\alpha\wedge{\bar\p}\alpha=0$ in $X$, by real analyticity. This shows that $\Sigma^1=X$.

Levi flatness of the regular parts of levels follows from Proposition \ref{Jenny}. 
\end{proof}
\begin{lemma}\label{OLD}
Let $Y$ be a (non necessarily compact) real analytic Levi flat hypersurface in a
complex surface $W$. Assume that there is a real analytic plurisubharmonic function
$\beta:V\to\R$ on an open neighbourhood $V$ of $Y$, such that $\{p\in
V:\beta(p)=0\}=Y$  and $\partial \beta(p)\not=0$ for all $p\in Y$. Then there is an open neighbourhood $U$ of $Y$, $U\subset V$, and a pluriharmonic function
$\chi:U\to\R$ such that $Y=\{p\in U:\chi(p)=0\}$ and
$\partial\chi(p)\neq 0$  for all $p\in Y$. 
\end{lemma}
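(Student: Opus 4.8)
The plan is to build $\chi$ by extracting from $\beta$ the characteristic $1$-form of the Levi foliation, upgrading it to a \emph{closed holomorphic} $1$-form near $Y$, and then integrating its real part. Set $\eta := d^c\beta|_{TY}$, the restriction to $Y$ of $d^c\beta = i(\bar\partial-\partial)\beta$. Since $\ker\eta = T^{\C}Y$ and $\partial\beta\neq 0$ on $Y$, the real analytic form $\eta$ is nowhere zero and defines the Levi foliation $\mathcal{F}$ of $Y$.

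First I would isolate the one place where plurisubharmonicity is genuinely used. At a point of $Y$ pick a $(1,0)$-frame $\{e_N,e_L\}$ with $e_L\in T^{\C}Y$ (so $\partial\beta(e_L)=0$) and $e_N$ transverse; the complex Hessian $\partial\bar\partial\beta$ has diagonal entries $a=\partial\bar\partial\beta(e_N,\bar e_N)$, $d=\partial\bar\partial\beta(e_L,\bar e_L)$ and off-diagonal entry $b=\partial\bar\partial\beta(e_N,\bar e_L)$. Levi flatness of $Y$ gives $d=0$, while $\partial\bar\partial\beta\ge 0$ forces the determinant $ad-|b|^2=-|b|^2\ge 0$, hence $b=0$. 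Thus on $Y$ one has $\partial\bar\partial\beta = a\,\partial\beta\wedge\bar\partial\beta$ with $a\ge 0$. On the other hand $d\beta|_{TY}=0$ gives $\bar\partial\beta|_{TY}=-\partial\beta|_{TY}$, so $(\partial\beta\wedge\bar\partial\beta)|_{TY}=0$; combining these,
\[
d\eta=(d\,d^c\beta)|_{TY}=2i\,\partial\bar\partial\beta|_{TY}=2i\,a\,(\partial\beta\wedge\bar\partial\beta)|_{TY}=0.
\]
This is the crux: plurisubharmonicity promotes Levi flatness from mere integrability $\eta\wedge d\eta=0$ to the much stronger closedness $d\eta=0$, which is exactly what trivialises the transverse holonomy of $\mathcal{F}$ and makes a pluriharmonic defining function possible.

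Next I would complexify and glue. Near each $p\in Y$ use the real analytic normal form $Y=\{\Im w=0\}$ with leaves $\{\Re w=\mathrm{const}\}$; there $\eta$ annihilates the leaf directions and is closed, so $\eta=g(\Re w)\,d\Re w$ for a real analytic $g$, and I set $\theta_p:=\tfrac{i}{2}\,g(w)\,dw$, a closed holomorphic $1$-form with $\theta_p|_{TY}=\tfrac{i}{2}\eta$. On overlaps two such forms agree on $Y$, hence agree on a whole neighbourhood, because a holomorphic function vanishing on the real hypersurface $Y$ vanishes identically: its zero set, being complex analytic, cannot contain the $3$-dimensional $Y$. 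So the $\theta_p$ glue to a global closed holomorphic $1$-form $\theta$ near $Y$ with $\theta|_{TY}=\tfrac{i}{2}\eta$; here uniqueness of holomorphic extension from $Y$ replaces any cohomological gluing, which is why no holonomy obstruction survives.

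Finally I would integrate. Shrinking to a tubular neighbourhood $U\cong Y\times(-\varepsilon,\varepsilon)$, co-oriented by the sign of $\beta$, every loop in $U$ is homotopic to one in $Y$, and for $\gamma\subset Y$ one has $\int_\gamma\theta=\tfrac{i}{2}\int_\gamma\eta\in i\R$ since $\eta$ is real; hence all real periods of $\theta$ vanish and $\chi:=\Re\int\theta$ is a well-defined single-valued pluriharmonic function on $U$. From $\theta|_{TY}=\tfrac{i}{2}\eta$ one gets $d\chi|_{TY}=\Re(\tfrac{i}{2}\eta)=0$, so $\chi$ is constant on $Y$, equal to $0$ after normalising the primitive, while $\partial\chi=\tfrac12\theta\neq 0$ on $Y$ forces $d\chi\neq 0$ there; thus $\{\chi=0\}$ and $Y$ are tangent smooth hypersurfaces along $Y$ and therefore coincide near $Y$, as required. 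I expect the closedness $d\eta=0$ to be the only real obstacle: the local normal form, the uniqueness gluing, and the vanishing of the real periods are all routine once $\eta$ is known to be closed.
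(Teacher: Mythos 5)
Your proposal is correct in substance, and its crux coincides with the paper's: both proofs rest on the observation that plurisubharmonicity, combined with Levi flatness, kills the mixed terms of the complex Hessian of $\beta$ along $Y$ (your frame computation $d=0\Rightarrow b=0$ is exactly the paper's conclusion $\rho_z=\rho_{\bar z}=0$ in Rea's distinguished coordinates, where $\beta=v\rho$ -- the same linear-algebra fact that a positive semidefinite Hermitian matrix with a vanishing diagonal entry has vanishing off-diagonal entries in that row), whence the nonvanishing real analytic $1$-form defining the Levi foliation ($i\partial\beta\vert_{TY}$ in the paper, which is $-\tfrac12 d^c\beta\vert_{TY}$, a constant multiple of your $\eta$) is closed. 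After that the routes genuinely diverge. The paper stops there and invokes Barrett's criterion \cite{barr}: a real analytic Levi flat hypersurface admits a pluriharmonic defining function if and only if its Levi foliation is defined by a nonvanishing closed real analytic $1$-form. You instead reprove the needed direction of that criterion by hand: complexify $\eta=g(u)\,du$ to the closed holomorphic form $\tfrac{i}{2}g(w)\,dw$ in each distinguished chart, glue by uniqueness of analytic continuation off the real hypersurface $Y$, and integrate, the real periods vanishing because $\theta\vert_{TY}$ is purely imaginary. Your version buys self-containedness (no appeal to Barrett) and makes explicit why no holonomy obstruction survives; the paper's version buys brevity and delegates the gluing and monodromy issues to the reference.

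Two steps of yours need shoring up, though neither is a fatal gap. First, the local normal form $Y=\{\mathrm{Im}\,w=0\}$ with leaves $\{\mathrm{Re}\,w=\mathrm{const}\}$ is precisely the holomorphic extension of the Levi foliation of a real analytic Levi flat hypersurface (Rea's theorem, cited as \cite{R} in the paper); it is the only place where real analyticity of $Y$ itself is used and should be cited rather than asserted. Second, ``two such forms agree on $Y$, hence agree on a whole neighbourhood'' is valid only on connected components of chart overlaps that meet $Y$; components disjoint from $Y$ carry no information, so the glued form is well defined only after shrinking to a tubular neighbourhood of $Y$ (which you introduce only later, for the period argument) chosen so that overlaps are controlled. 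Relatedly, ``tangent hypersurfaces therefore coincide'' is too weak as stated: what you actually have is the inclusion $Y\subseteq\{\chi=0\}$ of smooth hypersurfaces, so $Y$ is open (invariance of domain) and closed in $\{\chi=0\}\cap U$, and one obtains $\{\chi=0\}\cap U=Y$ by deleting from $U$ the remaining closed components -- a shrinking argument, not a tangency argument.
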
\begin{proof}
It is well known that a real analytic Levi flat hypersurface $M$ in a complex manifold admits a real analytic defining function which is pluriharmonic in a neighborhood of $M$ if and only its Levi foliation is defined by a nonvanishing real analytic closed 1-form (e.g cfr. \cite{barr}). 

In our situation since $Y$ is real analytic the foliation of $Y$ extends holomorphically on a neighbourhood of $Y$ (cfr. \cite{R}): there is an atlas for a neighbourhood of $Y$ in $X$, consisting of distinguished coordinate charts $(U_j,z_j,w_j)$, $p\mapsto
(z_j(p),w_j(p))$, 
$p\in U_j$, $(z_j(p),w_j(p))\in V_j\times B_j\subset\C^2$, $V_j$, $B_j$ connected, such that 
$$
Y\cap U_j=\big\{p\in U_j\!:{\rm Im}\,w_j(p)=0\big\}\;.
$$

Denote further $u_j={\rm Re}\,w_j$, $v_j={\rm Im}\,w_j$. Then $u_j, v_j\!:U_j\to\R$
are  pluriharmonic functions and $U_j\cap Y=\{v_j=0\}$. By this setup and the 
assumptions of the Lemma ($\partial\beta\not=0$ on $Y$) there are real 
analytic functions $\rho_j\!:U_j\to\R$ such that: $\beta=v_j\rho_j$ in $U_j$, 
${\rho_j}_{\vert U_j\cap Y}>0$.

We draw now some consequences for $\rho_j$ from the plurisubharmonicity of 
$\beta$.

By calculations on $\beta,\rho_j$ in the local coordinates 
$z_j,w_j=u_j+iv_j$, we obtain that at points $(z,u,0)\in U_j\cap Y$ the Levi form reduces to (we drop the index $j$ to simplify the notation)
\begin{eqnarray*}
\partial\bar\partial\beta(z,u,0)&=& -\frac{i}{2}\rho_z(z,u,0)dz\wedge d\bar 
w+\\
&+&\frac{i}{2}\rho_{\bar z}(z,u,0)dw\wedge d\bar z+\\
&+&\frac{i}{2}(\rho_w-\rho_{\bar w})(z,u,0)dw\wedge d\bar w.
\end{eqnarray*}
Since it is positive semidefinite in $U_j$ and the term $dz\wedge d\bar z$ is missing, we must have
$$
\rho_z(z,u,0)=0=\rho_{\bar z}(z,u,0),
$$
$(z,u,0)\in U_j\cap Y$ and since $V_j$, $j\in I$ are assumed connected 
$\rho_j(z,u,0)$ is constant in $z$. We denote it by 
$\rho^\ast_j(u)=\rho_j(z,u,0)$; $\rho_j\!:B_j\cap\R\to\R$ is a positive real analytic function. 

Since we have
$$
\partial\beta=v\rho_zdz+\left(-\frac{i}{2}\rho+v\rho_w\right)dw,
$$
we obtain eventually 
$$
\eta:=i\partial\beta_{|U_j\cap Y}=\frac{1}{2}\rho^\ast_j(u_j(p))du_j(p),
$$
for $p\in U_j\cap Y$.
$\eta$ is a nonvanishing real analytic closed 1-form which defines the Levi foliation of $Y$. This ends the proof of the lemma.
\end{proof}

We conclude this section by showing that, in absence of compact complex curves, the regular level sets of $\alpha$ intersecting $\Sigma^1$ are foliated by dense complex curves.

\begin{corol}\label{Lucy2}
Let $X$ be a complex surface with a real analytic plurisubharmonic exhaustion
function  $\alpha:X\to \R$ and $Y$ a connected component of the regular level set $\{\alpha=c\}$ such that $Y\cap\Sigma^1\neq\varnothing$. Assume that $Y$ does not contain compact complex curves. Then $Y$ is Levi flat, the leaves of the Levi foliation are dense in $Y$ and $Y\subseteqq\Sigma^1.$
\end{corol}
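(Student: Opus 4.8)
The plan is to read off the three assertions from the local results already established, reducing the global density statement to the dynamics of the Levi foliation.

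First I would record the easy reductions. Since $c$ is a regular value, $Y$ contains no critical points of $\alpha$, so that $Y=Y_{\textrm{reg}}$; moreover $Y$ is compact, being a connected component of the compact level set $\{\alpha=c\}$ of the exhaustion $\alpha$ (a component of a compact set is closed, hence compact). As $Y$ contains no compact complex curve and, vacuously, at worst isolated critical points of $\alpha$, Theorem \ref{LUCY} a) applies and yields that $Y$ is a real analytic Levi flat hypersurface. This settles the first claim and produces the Levi foliation $\mathcal F$ on the compact real $3$–manifold $Y$.

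Next I would equip $\mathcal F$ with a transverse pluriharmonic structure. On a sufficiently small neighbourhood $V$ of $Y$ the function $\beta:=\alpha-c$ is a real analytic plurisubharmonic function with $\{\beta=0\}\cap V=Y$ (here one uses that $Y$ is an entire connected component of the level set) and with $\partial\beta=\partial\alpha\neq 0$ on $Y$ (here one uses regularity, together with the fact that for a real valued $\beta$ one has $\partial\beta=0$ exactly where $d\beta=0$). Hence Lemma \ref{OLD} provides a pluriharmonic function $\chi$ on a neighbourhood $U$ of $Y$ with $Y=\{\chi=0\}$ and $\partial\chi\neq 0$ on $Y$. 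Writing $\chi={\rm Re}\,h$ with $h$ holomorphic on small charts, the leaves of $\mathcal F$ are the level sets of ${\rm Im}\,h$ restricted to $Y$, and the local imaginary parts differ by additive real constants; consequently $\mathcal F$ is defined on $Y$ by a single, globally well defined, nowhere vanishing, closed real $1$–form $\omega=(d\,{\rm Im}\,h)|_Y$ (nonvanishing on $TY$ because $dh\neq 0$ forces $d\,{\rm Re}\,h$ and $d\,{\rm Im}\,h$ to be $\R$–independent).

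The heart of the argument, and the step I expect to be the main obstacle, is the passage from this closed–form description to density of the leaves. A foliation defined by a closed $1$–form is without holonomy, and $Y$ is compact; by Sacksteder's theorem, a codimension–one foliation without holonomy on a compact manifold admits no exceptional minimal set, so every minimal set of $\mathcal F$ is either a compact leaf or all of $Y$. A compact leaf would be a compact complex curve contained in $Y$, which is excluded by hypothesis; therefore the only minimal set is $Y$ itself. Since the closure of any leaf is a closed saturated set, it contains a minimal set by compactness, hence equals $Y$, so every leaf is dense in $Y$. (Equivalently, one may argue through the period group $\Gamma\subset\R$ of $\omega$: rank one would force, via Tischler's theorem, a fibration of $Y$ over $S^1$ with compact leaves, again excluded, so $\Gamma$ is dense and all leaves are dense.)

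Finally I would deduce $Y\subseteq\Sigma^1$. The set $Y\cap\Sigma^1$ is non-empty, closed (as $\Sigma^1$ is closed) and, by Proposition \ref{Jenny} together with analytic continuation along leaves, saturated for $\mathcal F$; in particular it contains at least one full leaf $L$. Density gives $\oli L=Y$, whence $Y=\oli L\subseteq Y\cap\Sigma^1$ and therefore $Y\subseteq\Sigma^1$, completing the proof.
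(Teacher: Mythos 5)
Your proof is correct, and for the first two assertions it follows the paper's own route almost step for step: Theorem \ref{LUCY} gives Levi flatness, Lemma \ref{OLD} gives the pluriharmonic defining function $\chi$, and the Levi foliation is then defined by a global closed nonvanishing $1$-form (your $\omega=(d\,\mathrm{Im}\,h)|_Y$ is exactly the paper's $d^c\chi|_Y$), so that trivial holonomy plus Sacksteder's theorem excludes exceptional minimal sets, and the absence of compact leaves forces all leaves to be dense. Where you genuinely diverge is the last claim, $Y\subseteq\Sigma^1$. The paper argues via the maximum principle: any plurisubharmonic $\psi$ on $X$ attains its maximum on the compact set $Y$ at some $p_0$, is therefore constant on the (complex, immersed) leaf through $p_0$, hence constant on all of $Y$ by density of that leaf; so no plurisubharmonic exhaustion can be strictly plurisubharmonic at any point of $Y$. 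You instead show that $Y\cap\Sigma^1$ is a nonempty closed saturated set --- using Proposition \ref{Jenny} to see that near any of its points it is a union of plaques, and an open-closed argument along a leaf (your ``analytic continuation'') to get saturation --- and then conclude from density of a single leaf and closedness of $\Sigma^1$. Both arguments are sound; the paper's has the advantage of needing only the density statement (no second appeal to Proposition \ref{Jenny}) and of yielding the slightly stronger fact that \emph{every} plurisubharmonic function on $X$ is constant on $Y$, which is the form of the statement reused later (e.g.\ in Corollary \ref{FUN}), while yours exploits the local graph structure of $\Sigma^1$ and would survive in settings where one only knows $\Sigma^1$ locally. Note that the saturation step does require the small verification that each graph from Proposition \ref{Jenny} is an \emph{open} subset of the leaf through its points; this is true (a connected complex curve inside the Levi-flat $Y$ sits inside a leaf, and a $1$-dimensional complex submanifold of a $1$-dimensional complex manifold is open in it), but you should spell it out rather than leave it at the phrase ``analytic continuation along leaves.''
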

\begin{proof} $Y$ is Levi flat in view of Theorem \ref{LUCY} so, by Lemma \ref{OLD}, there are a neighbourhood $U$ of $Y$ and a pluriharmonic function $\chi:U\to\R$ such that $\{\chi=0\}=Y$ and $d\chi\neq 0$ on $Y$. The maximal complex subspace of $T_pY$, for $p\in Y$, is given by $\ker d\chi\cap \ker d^c\chi$; therefore, on the manifold $Y$, the foliation induced by Levi flatness is given by the $1$-form $d^c\chi$. Such a form is closed, because $dd^c\chi=0$ by pluriharmonicity, so every leaf of the foliation has trivial holonomy; hence, by a result of Sacksteder (cfr \cite[Notes to Chapter V, (2), p. 109]{CaNe}), the foliation has no exceptional minimal set then, as there are no compact (i.e. closed) leaves,  all leaves are dense in $Y$ (we refer to \cite{CaNe}, in particular to Chapter III, for the relevant definitions and results related to exceptional minimal sets of foliations).

This easily implies the last part of the statement: let $\psi$ be any plurisubharmonic function on $X$ and let $p_0\in Y$ be such that $\psi(p_0)=\max\limits_Y \psi$; then $\psi$ is constant on the leaf which passes through $p_0$, but then, by density, it is constant on $Y$. Therefore $Y\subseteqq\Sigma^1$.
\end{proof}

\section{Propagation of compact complex curves}\label{prph}\noindent
We observe the following fact. If $\chi : W \to\R$ is a nonconstant pluriharmonic function on a complex surface W such that the level set $S = \{p\in W :\chi(p) = 0\}$ is nonempty, compact and connected. Then $Y:=S\cap {\rm  Crt}(\chi)$ is a complex, compact analytic subset. Indeed, we know that $Y$ is a compact real analytic subset. Suppose that it is positive dimensional at some point $x\in Y$ and let $B$ an open ball centered at $x$ and $\tau : B \to \R$ a pluriharmonic conjugate of $\chi_{|B}$. Since $\chi_{|B\cap Y} = 0$ and the points of $Y$ are critical, the function $\tau$ is constant on each connected component $Y_0$ of $B\cap Y$ and we may assume $\tau_{|Y_0} = 0$. If $F : B \to\C$ denotes the holomorphic function $\chi + i\tau$ we then have $Y_0 = \{p \in B : \p F (p) = 0\}$.

\begin{lemma}\label{PL2bis}
Let $\chi\!:W\to \R$ be a pluriharmonic function on a complex surface $W$ with no critical points in $W$. Assume that $W$ contains a connected compact complex curve $C$. Then there exist a neighborhood $V$ of $C$ and proper holomorphic function $G:V\to\C$ such that $C=\{G=0\}$.
\end{lemma}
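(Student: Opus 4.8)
The plan is to produce $G$ as a primitive, on a suitable neighbourhood of $C$, of the closed holomorphic $1$-form naturally attached to $\chi$. Since $\chi$ is pluriharmonic, $\partial\bar\partial\chi=0$, so $\omega:=\partial\chi$ is a holomorphic $(1,0)$-form (it is $\bar\partial$-closed) and it is $d$-closed, because $d\omega=\partial\partial\chi+\bar\partial\partial\chi=0$. Moreover $d\chi=2\,\mathrm{Re}\,\omega$, so the hypothesis $\Crt(\chi)=\varnothing$, i.e. $d\chi\neq0$ everywhere, forces $\omega$ to be nowhere vanishing on $W$.

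First I would pin down $C$. The restriction $\chi_{|C}$ is a harmonic function on the compact connected curve $C$, hence constant, say $\chi_{|C}\equiv c$; in particular $C\subset\{\chi=c\}$, which is a smooth real hypersurface because $d\chi\neq0$, and therefore $C$ is a smooth (regular) compact complex curve by \cite[Proposition 4.2]{ST}. Pulling $\omega$ back to $C$ then gives $\omega_{|C}=\partial(\chi_{|C})=0$. Next I would globalize a primitive: choosing a tubular neighbourhood $V$ of $C$, the inclusion $C\hookrightarrow V$ is a homotopy equivalence, so every loop in $V$ is homologous to a loop in $C$; since $\omega$ is closed and $\omega_{|C}=0$, all its periods over $V$ vanish and $\omega=dF$ for a single-valued holomorphic $F:V\to\C$ with $dF=\omega\neq0$ (so $F$ is a submersion). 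As $F_{|C}$ is holomorphic on the compact curve $C$ it is a constant $c_0$, and I set $G:=F-c_0$, which vanishes on $C$ and satisfies $dG\neq0$.

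It remains to arrange $C=\{G=0\}$ and the properness of $G$. Because $dF\neq0$, the zero set $\{G=0\}=F^{-1}(c_0)$ is a smooth complex curve; since both $C$ and $\{G=0\}$ are smooth, $1$-dimensional and $C\subseteq\{G=0\}$, they coincide near each point of $C$, so $C$ is open and, being compact, also closed in $\{G=0\}$, hence an isolated connected component. After shrinking $V$ we may therefore assume $\{G=0\}\cap V=C$. Finally, fixing a relatively compact tubular neighbourhood $V_0\Subset V$ of $C$ with $\overline{V_0}\subset V$, the continuous function $|G|$ has a positive minimum $\delta_0$ on the compact set ${\rm b}V_0$ (which is disjoint from $C$); replacing $V$ by $\{p\in V_0:|G(p)|<\delta_0/2\}$ makes $G$ a proper map onto a disc while keeping $\{G=0\}=C$.

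The main obstacle is the interplay between the vanishing $\omega_{|C}=0$ and the topological identity $H_1(V)\cong H_1(C)$, which together kill the periods and yield the single-valued primitive $F$; this is exactly where compactness and connectedness of $C$ enter. Once $F$ is available, the passage from the inclusion $C\subseteq\{G=0\}$ to equality and the construction of the proper $G$ are routine point-set topology and should present no essential difficulty.
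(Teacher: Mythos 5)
Your proof is correct, but it reaches the key holomorphic function by a genuinely different mechanism from the paper's. The paper argues \v{C}ech-style: it takes the good covering $\{V_j\}$ of $C$ from Lemma \ref{gocov}, produces a local pluriharmonic conjugate $\tau_j$ of $\chi$ on each $V_j$, normalizes it by requiring $\tau_j\equiv 0$ on $C\cap V_j$, and then uses the combinatorics of the covering (pairwise intersections connected and meeting $C$) to get $\tau_j=\tau_k$ on overlaps, so the local conjugates glue to a global $\tau$ and $F=\chi+i\tau$ is holomorphic. You argue de Rham-style: $\omega=\partial\chi$ is a closed, nowhere vanishing holomorphic $1$-form whose pullback to $C$ vanishes (as $\chi_{|C}$ is constant), so on a tubular neighbourhood $V$, where $H_1(V)\cong H_1(C)$, all periods of $\omega$ vanish and $\omega=dF$ for a single-valued holomorphic $F$; up to the factor $2$ and an additive constant this is the same function the paper constructs, since for the paper's $F$ one has $\partial\chi=\tfrac12\,dF$. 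Both routes rest on the same two preliminary facts ($\chi_{|C}$ constant; $C$ regular because it lies in a smooth level set of $\chi$) and both use a tubular neighbourhood of the smooth compact curve, so neither needs stronger prerequisites; your period argument is shorter and more conceptual, the paper's gluing argument avoids any appeal to homology. A point in your favour is that you explicitly arrange properness of $G$ by the shrinking argument with $|G|<\delta_0/2$, a step the paper's proof leaves implicit; note only that ``proper'' must then be read as proper onto the image disc rather than onto all of $\C$, which is exactly what is needed in Theorem \ref{BIR}, namely that the fibres $\{G=\zeta\}$ for small $|\zeta|$ are compact.
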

\begin{proof}
The restriction of $\chi$ to $C$ has to be a constant value, say $d$.

Choose a good covering $\left\{V_j\right\}_{j=1}^n$ of $C$ as in Lemma \ref{gocov}. Consider $\chi_{_{\vert V_j}}$, $j$ fixed. Assuming $r>0$ small enough
so that $V_j$ is contained in a topological ball in $W$, we conclude
that $\chi$ has a pluriharmonic conjugate in this ball, and so in
$V_j$, say $\tau'_j\!:V_j\to\R$. Since $\chi_{_{\vert C}}=c$, a
constant, we conclude that its harmonic conjugate on $C\cap V_j$ is locally constant. Since $C\cap V_j$
is connected ${\tau'_j}_{\vert C\cap V_j}$ is constant. Subtracting the latter constant from
$\tau'_j$ we obtain a function $\tau_j\!:V_j\to\R$ such
that
\vspace{3mm}
\begin{itemize}
\item[$\bullet$] $\tau_j$ is a pluriharmonic conjugate of $\chi_{\vert
V_j}$;\\
\item[$\bullet$] $\tau_{\vert C\cap V_j}\equiv
0.$
\end{itemize}
Consider now two intersecting neighborhoods  $V_j$, $V_k$ and define
$V_0\!:=V_j\cap V_k\neq\varnothing.$ Since $V_0$ is connected, $\tau_j-\tau_k=a$
constant in $V_0$ and so $\tau_j-\tau_k\equiv 0$ in $V_0$,
because ${\tau_j}_{\vert V_0\cap C}\equiv 0\equiv
{\tau_k}_{\vert V_0\cap C}$. (Note that 
$V_0\cap\,C\neq\varnothing$ if $V_0\neq\varnothing$.) Thus $\tau_j(p)=\tau_k(p)$, whenever $p\in V_j\cap V_k$. Consequently, the family $\{\tau_j\}_{j=1}^n$ defines a single-valued pluriharmonic function $\tau\!:\!V\to\!\R$, where $V\!\!=\bigcup\limits_{j=1}^nV_j$, such that $F(p)\!=\!\chi(p)+i\tau(p)$, $p\in V$, is holomorphic.

Therefore, there exists $F\in\mathcal{O}(V)$, a non constant holomorphic function, such that $F_{\vert _C}=d$. As $d$ is a regular value of $\chi$, $C$ is non singular, so is a connected component of $\{F=d\}$. It follows that there exists a neighborhood $V$ of $C$ such that $\{F=d\}\cap V=C$ and consequently we set $G=F-d$.\end{proof}

\begin{teorema}\label{BIR}Let $X$ be a weakly complete complex surface, $W\subset X$ a domain and $\chi:W\to \R$ be a pluriharmonic function. Suppose that a regular level of $\chi$ contains a compact complex  curve  Then $X$ is proper over a (possibly singular) complex curve.
\end{teorema}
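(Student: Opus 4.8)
The plan is to turn the local fibration furnished by Lemma \ref{PL2bis} into a global proper map onto a curve, with the weak completeness of $X$ supplying the control and Nishino's theorem \cite[III.5.B]{Ni} doing the globalization. First I would localize around the given compact curve. Let $d$ be the regular value of $\chi$ with $C\subset\{\chi=d\}$. Since $d\chi\neq0$ along the compact set $C$, there is a neighbourhood $V\subset W$ of $C$ on which $\chi$ has no critical points, so Lemma \ref{PL2bis} applies and yields a proper holomorphic $G:V\to\C$ with $C=\{G=0\}$ and $\Re G=\chi-d$. For a holomorphic $G$ one has $dG=0$ exactly where $d\Re G=0$; hence $dG=d\chi\neq0$ along $C$, so $C$ is a smooth regular fibre of $G$ with trivial normal bundle, and the nearby fibres $C_t=G^{-1}(t)$ are compact complex curves varying holomorphically and foliating a neighbourhood of $C$. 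Thus $C$ genuinely moves in a one–parameter family of compact curves.

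Second, I would record the link between this family and a plurisubharmonic exhaustion $\alpha$ of $X$. Any compact complex curve $D\subset X$ lies in a single level of $\alpha$, because $\alpha|_D$ lifts to a subharmonic function on the compact normalization of $D$ and is therefore constant. Consequently each $C_t$ lies in some level $\{\alpha=a(t)\}$, and any compact curve produced by continuing the family $\{C_t\}$ stays, as long as $a$ remains bounded, inside a compact sublevel $\{\alpha\le M\}$. This is the confinement needed to keep the family from dispersing to infinity.

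The core of the argument is the propagation step. Starting from the germ of proper fibration $G:V\to\C$ about $C$, one continues the family of compact curves and appeals to Nishino's theorem \cite[III.5.B]{Ni}: the weak completeness of $X$ — concretely the plurisubharmonic exhaustion $\alpha$, constant on each compact fibre — forces the local fibration to extend to a global proper holomorphic map $f:X\to B$ onto a (possibly singular) open complex curve $B$, whose fibres are the compact curves of the propagated family. This is precisely the assertion that $X$ is proper over $B$.

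The main obstacle is this globalization. As the parameter runs to the edge of its range, two degenerations must be excluded: a fibre $C_t$ could acquire singularities or split into several components, and the curves could escape every compact subset of $X$ without being recorded by the base. The first is handled by the regularity of $C$ (trivial normal bundle) together with the compactness of families of cycles of bounded volume; the second is handled by the exhaustion $\alpha$, whose constancy on compact curves turns boundedness of $a$ into confinement and hence properness of $f$. Assembling these controls into an honest holomorphic fibration over a curve — rather than a mere foliation by compact leaves — is exactly where Nishino's theorem is indispensable, and where the hypothesis of weak completeness enters in an essential way.
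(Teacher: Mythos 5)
Your opening moves coincide with the paper's: Lemma \ref{PL2bis} produces the proper holomorphic $G:V\to\C$ with $C=\{G=0\}$, the nearby fibres $\{G=\zeta\}$, $|\zeta|<\epsilon$, are compact complex curves, and Nishino's theorem \cite[III.5.B]{Ni} is invoked to globalize. The gap lies in what you ask Nishino to deliver. You assert that \cite[III.5.B]{Ni} yields a \emph{proper} holomorphic map $f:X\to B$ onto a (possibly singular) curve whose fibres are exactly the compact curves of the propagated family. What the paper extracts from that theorem is strictly weaker: the local family extends to a family globally defined on $X$ by a holomorphic map $\Phi:X\to R$ with $R$ a smooth open Riemann surface, and neither properness of $\Phi$ nor compactness of all of its fibres is part of that conclusion. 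Properness is precisely the content of the statement being proved, and the paper obtains it by a different mechanism: since $R$ is an open Riemann surface, $X$ carries a nonconstant holomorphic function, hence by Ohsawa's theorem \cite{Ohs} the weakly complete surface $X$ is holomorphically convex, and the Cartan--Remmert reduction \cite{Car} then gives a proper map onto a Stein space, which is $1$-dimensional (hence a possibly singular curve) because the compact curves sweep out an open set and force the fibres of the reduction to be positive-dimensional.

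Your fallback arguments do not close this gap. The ``confinement'' observation --- that $\alpha$ is constant on each compact curve, so curves on bounded levels stay in a compact sublevel --- would give properness only if you already knew that \emph{every} fibre of the global map is compact and that $\alpha$ descends to a function on the base; if some fibre is non-compact, or is a union of infinitely many compact curves lying on different levels of $\alpha$, the argument says nothing, and ruling this out is exactly the hard point. Similarly, ``compactness of families of cycles of bounded volume'' is invoked without any a priori volume bound as the parameter approaches the edge of its range. So the skeleton is right, but the passage from ``a globally defined holomorphic map to a Riemann surface'' to ``a proper fibration over a curve'' is missing, and it is the step the theorem is actually about; the Ohsawa--Remmert route (or an honest proof of properness) is needed to finish.
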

\begin{proof}
Without loss of generality, suppose $C$ connected. In view of Lemma \ref{PL2bis} there is exist a neighborhood $V$ of $C$ and holomorphic function $G:V\to\C$ such that $C=\{G=0\}$. Then the family $\mathfrak F_0=\{G=\zeta, |\zeta|<\epsilon\}$, for some $\epsilon>0$, consists of compact complex curves so, by \cite[III.5.B]{Ni}, $\mathfrak F_0$ extends to a family $\mathfrak F$ globally defined on $X$ by a holomorphic map $\Phi:X\to R$, where $R$ is an open Riemann surface. In particular, $X$ admits a non constant holomorphic function. In view of \cite{Ohs}, $X$ is holomorphically convex hence proper over a Stein space (cfr. \cite{Car}).
\end{proof}

We have the following fundamental corollary
\begin{corol}\label{FUN}
Let $X$ be a complex surface, $\alpha$ a real analytic plurisubharmonic exhaustion function, $Y$ a compact connected component of the regular level set $\{\alpha=c\}$. Assume that $Y$ is Levi flat and contains a non closed leaf. Then there exist a pluriharmonic function $\chi:V\to\R$ on a connected neighbourhood of $Y$ and $\epsilon_0>0$ such that
\begin{itemize}
\item[{\rm a)}] $V\cap\{\alpha=c\}=Y,\>\> Y=\{\chi=0\}$
\item[{\rm b)}] the set                                    
$$                                                                                                                                                                                                                               
H=\left\{p\in V\!:0<\chi(p)<\epsilon_0\right\}\;,
$$
is relatively compact in $V$, does not contain a critical point of $\alpha$ or
$\chi$, and does not contain any compact complex curve;\\
\item[{\rm c)}] if $0<\epsilon<\epsilon_0$, the Levi flat hypersurface $\{\chi=\epsilon\}$ is foliated by dense complex leaves and $\alpha$ is constant on it;
\\
\item[{\rm d)}] $\p{\bar\p}\alpha\wedge\p{\bar\p}\alpha=0$ and
$\p\alpha\wedge{\bar\p}\alpha\wedge\p{\bar\p}\alpha=0$ on the whole of $X$;
\item[{\rm e)}] there is a real analytic function $\mu:H\to\R$ such that 
$$\p\bar\p\alpha=\mu\p\alpha\wedge\bar\p\alpha\quad\textrm{and}\quad d\mu\wedge d\alpha=0\;.$$
\end{itemize}
\end{corol}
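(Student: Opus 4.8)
The plan is to manufacture $\chi$ from $\alpha$ near $Y$, establish the dense--leaf structure of $Y$, push it out to the nearby levels, and read off e) and c) from the resulting identification $\alpha=\lambda\circ\chi$. First I would isolate $Y$: being a compact connected component of the closed set $\{\alpha=c\}$, it has a neighbourhood $V$ with $V\cap\{\alpha=c\}=Y$ (the first identity of a)), and on $V$ the function $\beta:=\alpha-c$ is real analytic plurisubharmonic with $\{\beta=0\}=Y$ and $\partial\beta\neq0$ on $Y$ (since $Y$ is a regular level, $d\alpha\neq0$ forces $\partial\alpha\neq0$). Lemma \ref{OLD} then furnishes a pluriharmonic $\chi$ on a neighbourhood of $Y$ with $Y=\{\chi=0\}$ and $\partial\chi\neq0$ on $Y$, completing a). Shrinking $V$ to a relatively compact neighbourhood and choosing $\epsilon_{0}$ small, the continuity of $d\alpha$ and $d\chi$ (both nonzero on $Y$) makes $H=\{0<\chi<\epsilon_{0}\}$ relatively compact in $V$ and free of critical points of $\alpha$ and of $\chi$; this gives all of b) except the absence of compact curves, which I postpone.

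Next I would show that every Levi leaf of $Y$ is dense. As in Corollary \ref{Lucy2}, the Levi foliation of $Y=\{\chi=0\}$ is defined by the closed $1$-form $d^{c}\chi|_{Y}$, so it has trivial holonomy and, by Sacksteder's theorem \cite{CaNe}, no exceptional minimal set; hence all leaves are dense as soon as there is no compact leaf. To exclude compact leaves, suppose $C\subset Y$ were a compact complex curve. Then $C$ sits in the regular level $\{\chi=0\}$ of the pluriharmonic $\chi$, and Theorem \ref{BIR} makes $X$ proper over an open curve through a map $\Phi:X\to R$ with compact fibres; since $\alpha$ is plurisubharmonic it is constant on each fibre, so $\alpha=\hat\alpha\circ\Phi$, whence $Y$ is a union of fibres whose Levi distribution is the fibre direction, forcing \emph{every} leaf of $Y$ to be compact and contradicting the presence of a non-closed leaf. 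Therefore all leaves of $Y$ are dense, and the maximum-principle argument of Corollary \ref{Lucy2} gives $Y\subseteq\Sigma^{1}$. As $Y$ is a regular level it has no local minimum of $\alpha$, so the second part of Theorem \ref{LUCY} applies and yields d) together with $\Sigma^{1}=X$.

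The core of the proof is to carry this over to the neighbouring levels. By d) the Levi form of $\alpha$ has rank $\le1$ with kernel the complex tangent to the levels, so on $H$ we may write $\partial\bar\partial\alpha=\mu\,\partial\alpha\wedge\bar\partial\alpha$ for a real analytic $\mu$, equivalently $dd^{c}\alpha=\mu\,d\alpha\wedge d^{c}\alpha$; this is the first identity of e). Restricting to a regular level $\{\alpha=t\}$ annihilates $d\alpha$, so $d\!\left(d^{c}\alpha|_{\{\alpha=t\}}\right)=0$: each such level is Levi flat and, exactly as for $Y$, its Levi foliation is given by a \emph{closed} form and has trivial holonomy. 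I would then repeat the dichotomy level by level: a compact leaf of some $\{\alpha=t\}$ is a compact complex curve which, via Theorem \ref{BIR} and $\alpha=\hat\alpha\circ\Phi$, would again make every leaf of $Y$ compact, contradicting the non-closed leaf. Hence, by Sacksteder, every level of $\alpha$ meeting $H$ has only dense leaves; since a compact complex curve in $H$ would be a compact leaf of such a level, $H$ contains no compact complex curve, finishing b).

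Finally I would complete e) and c). Differentiating the closed form $\partial\bar\partial\alpha=\mu\,\partial\alpha\wedge\bar\partial\alpha$ gives $\partial\mu\wedge\partial\alpha\wedge\bar\partial\alpha=0$, hence $\partial\mu\parallel\partial\alpha$; as the Levi leaves are tangent to $\ker\partial\alpha$, $\mu$ is constant along them, and by the density just proved $\mu$ is constant on each level, i.e. $\mu=\mu(\alpha)$ and $d\mu\wedge d\alpha=0$, which is e). For c), $\chi$ is pluriharmonic and therefore harmonic on each complex leaf; a leaf through a point where $\chi$ attains its maximum on the compact level $\{\alpha=t\}$ must be constant, so by density $\chi$ is constant on $\{\alpha=t\}$. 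Thus $\chi=\nu(\alpha)$ with $\nu'\neq0$ (because $d\chi\neq0$), so $\alpha=\lambda\circ\chi$ and the levels of $\chi$ and $\alpha$ coincide on $H$; consequently each $\{\chi=\epsilon\}$, $0<\epsilon<\epsilon_{0}$, is a Levi flat level of $\alpha$ with dense leaves on which $\alpha\equiv\lambda(\epsilon)$, proving c). The main obstacle is the third step, the propagation of the dense-leaf Levi flat structure from $Y$ to the neighbouring levels: it is the closed-form description of the Levi foliations that makes Sacksteder's dichotomy available, and this must be combined with Nishino's propagation through Theorem \ref{BIR} to rule out the compact-leaf alternative.
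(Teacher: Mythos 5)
Your proposal is correct in substance and runs on the same toolkit as the paper: Lemma \ref{OLD} for a), Theorem \ref{BIR} (Nishino propagation) combined with the fibration argument (``$X$ proper over a curve forces all leaves of $Y$ to be compact'') to exclude compact curves, the closed-form/trivial-holonomy/Sacksteder argument of Corollary \ref{Lucy2} for density, and the maximum principle plus density for constancy. Where you genuinely diverge is in the order and in two derivations, and both variants work. For d), you first prove that the leaves of $Y$ are dense, deduce $Y\subseteq\Sigma^1$ by the maximum principle, and then invoke the second (``in general'') part of Theorem \ref{LUCY}; the paper instead proves c) first and obtains d) because the Levi flat level components fill a set with nonempty interior, on which the two forms vanish, and then uses real analyticity. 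Your route is arguably more economical, since it reuses a theorem already proved instead of repeating the open-set-plus-analyticity step. For e), the paper writes $\alpha=\varphi\circ\chi$ on $H$ (a consequence of c)) and computes $\mu$ explicitly in terms of $\varphi$, which yields real analyticity of $\mu$ and $d\mu\wedge d\alpha=0$ simultaneously; you instead produce $\mu$ by the pointwise rank argument (this is exactly Lemma \ref{LAURA2}, which the paper proves later and independently) and get $d\mu\wedge d\alpha=0$ by differentiating $\partial\bar\partial\alpha=\mu\,\partial\alpha\wedge\bar\partial\alpha$ and using density of the leaves; that computation is sound.

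The one step you must shore up is the propagation, because you run Sacksteder and Theorem \ref{BIR} on components of levels of $\alpha$ rather than of $\chi$. A connected component of $\{\alpha=t\}$ meeting $H$ is compact ($\alpha$ is an exhaustion), but nothing you say prevents it from leaving $H$, exiting the domain $V$ of $\chi$, and meeting $\Crt(\alpha)$; along such a component you would have neither the nonvanishing closed form $d^c\alpha$ (its closedness on the level uses the identity $\partial\bar\partial\alpha=\mu\,\partial\alpha\wedge\bar\partial\alpha$, valid only off $\Crt(\alpha)$) nor a pluriharmonic function to feed into Theorem \ref{BIR}, so the dichotomy argument breaks. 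The fix is short but has to be said: choose $V$ with $\overline V$ compact, $\overline V\cap\{\alpha=c\}=Y$ and $\overline V\cap\Crt(\alpha)=\varnothing$; the connected components $K_\delta$ of $\{|\alpha-c|\le\delta\}$ containing $Y$ decrease to $Y$ as $\delta\to 0$, hence $K_\delta\subset V$ for small $\delta$, and taking $\epsilon_0$ so small that $H\subset K_\delta$, every component of $\{\alpha=t\}$ meeting $H$ is a connected subset of $\{|\alpha-c|\le\delta\}$ meeting $K_\delta$, hence lies in $K_\delta\subset V$; this same containment is what makes your relation $\chi=\nu(\alpha)$ (and hence c)) legitimate. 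The paper avoids this issue entirely by phrasing b), c), e) in terms of the levels $\{\chi=\epsilon\}$, which by construction are compact, contained in $\overline H\subset V$, regular for $\chi$, and Levi flat because $\chi$ is pluriharmonic, so that only the already-proved statement b) is needed before Sacksteder applies.
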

\begin{proof}
a) It follows from Lemma \ref{OLD}.

b) By hypothesis, $\alpha$ has no singular point on $Y$, therefore there exists a neighborhood of it where no other singular points for $\alpha$ appear; on the other hand, the set ${\rm Crt(\chi)}$ of the critical point of $\chi$ has complex analytic structure so $\chi$ is constant on its connected component. This implies that also $\chi$ has no critical point on $Y$. Therefore, there exists $\epsilon_0$ such that the set $H$ does not contain critical points of either $\alpha$ or $\chi$.
 
Let us consider the open set 
 $$
 N=\{p\in V \ :\ -\epsilon_1<\chi(p)<\epsilon_0\}
 $$
 with $\epsilon_1$ so small that no critical point of $\alpha$ or $\chi$ is contained in $N$. 

 If $N\setminus Y$ contains a compact complex curve $C$, then 

 by Theorem \ref{BIR} $X$ is union of compact complex curves. This is absurd, as we supposed that $Y$ contained a non-closed complex leaf. Therefore, $H\subseteqq N\setminus Y$ does not contain any compact curve.
 
 c) Consider the Levi flat level $Y_\epsilon=\left\{\chi=\epsilon\right\}$, for $0<\epsilon<\epsilon_0$; by the previous point, $Y_\epsilon$ does not contain complex curves, so by Corollary \ref{Lucy2}, the leaves of the Levi foliation are dense in $Y_\epsilon$.
 Now, let $p^\ast\in Y_\epsilon$ be such that $\max\,\alpha_{\vert Y_\epsilon}=\alpha(p^\ast)$ and consider the leaf $F$ of the Levi foliation passing through $p^\ast$; the function $\alpha_{\vert F}$ is a plurisubharmonic function on $F$, which is a complex immersed curve, attaining maximum in an interior point, hence it is constant on $F$. But, $F$ being dense in $Y_\epsilon$, $\alpha$ has to be constant on $Y_\epsilon$.

d) Since $Y_\epsilon$ is a connected component of $\{\alpha=A_\epsilon\}$) and is Levi flat we have on $Y_\epsilon$
   \begin{equation}\label{Fortinbras}\begin{cases}
			\p\alpha\wedge{\bar\p}\alpha\wedge\p{\bar\p}\alpha=0&\\
   \p{\bar\p}\alpha\wedge\p{\bar\p}\alpha=0&
				\end{cases}\end{equation}
			
But since the connected Levi flat components of level set of $\alpha$ cover $N_0$, the equations \eqref{Fortinbras} hold on $N_0$, a set with non empty interior. In view of real analytity of the forms \eqref{Fortinbras} we conclude that the equations\eqref{Fortinbras} hold on the whole of $X$.

e) By part c), $\alpha$ is constant on every level set of $\chi$
in $H$, and so there is a real-valued function $\varphi:(0,\epsilon_0)\to\R$ such
that $\alpha{|H}=\varphi\circ\chi_0$. Since $\alpha$ and $\chi_0$ are both real
analytic and $\p\alpha$, $\p\chi_0$ do not vanish in $H$ by part b), it is clear that $\varphi$ is real analytic
and $\varphi'(t)\neq 0$ in $(0,\epsilon_0)$. Now, by direct computation
$$
\p\alpha\wedge{\bar\p}\alpha=(\varphi'\circ\chi_0)^2\p\chi_0\wedge{\bar\p}\chi_0,
$$
in $H$,
$$
\p{\bar\p}\alpha=(\varphi''\circ\chi_0)^2\p\chi_0\wedge{\bar\p}\chi_0,
$$
in $H$.

Thus we set $\mu={(\varphi/\varphi')}^2\circ\chi_0$, which is constant on the
level sets of $\chi$ in $H$ and therefore on those of $\alpha_{|H}$ as well.
Then $d\mu\wedge d\alpha=0$ in $H$. 
\end{proof}

We conclude this section with a description of the geometric structure of weakly complete surfaces.

\begin{teorema}\label{MAR1}
Let $X$ be a weakly complete complex surface and $\alpha\!:X\to\R$ a real analytic plurisubharmonic exhaustion function. Then three cases can occur:
\begin{enumerate}
\item[1)] $X$ is a modification of a Stein space;
\item[2)] $X$ is proper over a (possibly singular) complex curve;
\item[3)] the connected components of the regular levels of $\alpha$ are foliated with dense complex curves, i.e. $X$ is of Grauert type.
\end{enumerate}
\end{teorema}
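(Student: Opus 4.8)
The plan is to organize the proof around two invariants: the minimal singular set $\Sigma^1$ and, within it, the local structure of the Levi foliation of those regular levels of $\alpha$ that meet $\Sigma^1$. The first, easy reduction is the case $\Sigma^1=\varnothing$: choosing a minimal plurisubharmonic exhaustion function $\varphi$ (property a) of the introduction) one has $\Sigma^1_\varphi=\Sigma^1=\varnothing$, so $\varphi$ is strictly plurisubharmonic everywhere, $X$ is $1$-complete and hence Stein, a trivial instance of case 1). From now on $\Sigma^1\neq\varnothing$.

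The core is a dichotomy run on a connected component $Y$ of a regular level $\{\alpha=c\}$ with $Y\cap\Sigma^1\neq\varnothing$, according to the local picture of $Y\cap\Sigma^1$ given by Proposition \ref{Jenny} (precisely the alternative already appearing inside the proof of Theorem \ref{LUCY}). If through some point of $Y_{\textrm{reg}}\cap\Sigma^1$ infinitely many of the local holomorphic graphs accumulate, then by real analyticity $Y_{\textrm{reg}}$ is Levi flat, and a further split occurs. If $Y$ contains no compact complex curve, Corollary \ref{Lucy2} gives that the Levi foliation of $Y$ has dense leaves and that $Y\subseteq\Sigma^1$; since $Y$ is a regular level it carries no local minimum of $\alpha$, so Theorem \ref{LUCY}(b) propagates Levi flatness to all of $X$, forcing $\Sigma^1=X$ and every regular level component to be a dense-leaved Levi flat hypersurface, which is case 3). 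If instead the Levi flat $Y$ does contain a compact complex curve $C$, I would apply Lemma \ref{OLD} to $\beta=\alpha-c$ on a neighbourhood $V$ of $Y$ with $\{\alpha=c\}\cap V=Y$ (note $\partial\alpha\neq0$ on the regular level $Y$), obtaining a pluriharmonic $\chi$ with $Y=\{\chi=0\}$ and $\partial\chi\neq0$; then $C$ is a compact complex curve in the regular level $\{\chi=0\}$ of the pluriharmonic $\chi$, and Theorem \ref{BIR} yields that $X$ is proper over a (possibly singular) complex curve, which is case 2).

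The complementary branch is when through each point only finitely many graphs pass: then, as in the proof of Theorem \ref{LUCY} (analytic continuation together with the Remmert-Stein theorem), $Y\cap\Sigma^1$ is near these points a finite union of isolated compact complex curves. Such curves are exceptional, and Grauert's contractibility criterion blows them down, realizing $X$ as a modification of a Stein space, which is case 1). In the same way one disposes of the remaining configuration in which no regular level meets $\Sigma^1$ at all, i.e. $\Sigma^1\subseteq\Crt(\alpha)$: by \eqref{Osric} the whole Levi form of $\alpha$ vanishes on $\Sigma^1$, and the local-maximum-property sets $\Sigma^1_\alpha(c)$ furnished by Theorem \ref{lmp} are again compact complex curves lying in critical levels, which are contracted exactly as above into case 1).

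I expect the main obstacle to be precisely the compact-curve alternative, where two distinct mechanisms must be recognized and kept apart. When the curve moves, its propagation into a global fibration rests on Nishino's theorem \cite[III.5.B]{Ni} (packaged in Theorem \ref{BIR}), and the only way to make that theorem applicable is to convert Levi flatness into a genuine pluriharmonic defining function via Lemma \ref{OLD}; when the curve is rigid, one must instead invoke Grauert contractibility. A subsidiary technical point, which the local-maximum-property results of Theorem \ref{lmp} are designed to control, is guaranteeing that a regular level of $\alpha$ actually meets $\Sigma^1$ unless one is already in the modification case.
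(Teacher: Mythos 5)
Your branches toward cases 2) and 3) are essentially sound and run parallel to the paper: Levi flatness from the infinitely-many-graphs alternative of Proposition \ref{Jenny}, Corollary \ref{Lucy2} and Theorem \ref{LUCY} for the dense-leaf case, and Lemma \ref{OLD} followed by Theorem \ref{BIR} when a Levi flat regular level contains a compact curve. The genuine gap is every place where you reach case 1) by invoking ``Grauert's contractibility criterion''. That criterion requires the compact curves to have negative definite intersection matrix, and you offer no argument for this; in fact it is false in exactly the situation your finitely-many-graphs branch produces. A compact curve sitting inside a level set of a plurisubharmonic exhaustion typically \emph{moves} in a family (this is precisely what Nishino's theorem detects), hence has self-intersection $0$ and cannot be blown down. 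Concretely, take $X=R\times E$ with $R$ an open Riemann surface and $E$ an elliptic curve, and $\alpha$ the pullback of an exhaustion of $R$: through each point of a regular level passes exactly one local graph (the fiber $\{r\}\times E$), so you are in your ``rigid curve'' branch, yet the fibers are not contractible and $X$ is \emph{not} a modification of a Stein space --- it is case 2). The paper resolves this branch in the opposite direction and only by aggregating over levels: if every regular level beyond some $c_0$ contains a compact curve, then $X$ carries uncountably many compact curves, Nishino's results (\cite[Propositions 9 and 7]{Ni}) produce a local holomorphic map fibering a neighbourhood of one of them in compact curves, and Theorem \ref{BIR} with $\chi={\sf Re}\,f$ yields case 2), never case 1).

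You are also missing the paper's actual mechanism for case 1), which involves no contraction of curves at all: if there are values $c_n\to+\infty$ with $\{\alpha=c_n\}\cap\Sigma^1=\varnothing$, one picks a minimal function $\varphi$ and small $\epsilon_n>0$ so that $X_n=\{\alpha+\epsilon_n\varphi<c_n\}$ is a relatively compact strictly pseudoconvex domain, hence a modification of a Stein surface; since $X_n$ is Runge in $X_{n+1}$, the union $X$ is holomorphically convex and therefore itself a modification of a Stein space. Your ``remaining configuration'' ($\Sigma^1$ contained in critical levels) is covered by exactly this argument, because regular values are dense and each regular level then misses $\Sigma^1$; instead, your alternative claim that the sets $\Sigma^1_\alpha(c)$ in critical levels are compact complex curves is unjustified --- a local-maximum set in a critical level can perfectly well be $3$-dimensional, and Lemma \ref{ATT} applies only when the dimension is at most $2$ --- and even granting that they are curves, their contractibility would still be unproven.
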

\begin{proof}
Let us suppose that there exists a sequence of real numbers $\{c_n\}_{n\in\N}\subset\R$ tending to $+\infty$ such that
$$\{\alpha=c_n\}\cap\Sigma^1=\emptyset\;;$$
as $\Sigma^1$ is a closed set and the levels of $\alpha$ are compact, for every $n\in\N$ there exists $\delta_n>0$ such that for every $c\in(c_n-\delta_n,c_n+\delta_n)$ the intersection $\{\alpha=c\}\cap\Sigma^1$ is empty.

Let $\varphi$ be a minimal plurisubharmonic smooth exhaustion function for $X$; for every $n$ there is $\epsilon_n>0$ small enough such that
$$\{\alpha+\epsilon_n\varphi=c_n\}\subset\{c_n-\delta_n<\alpha<c_n+\delta_n\}$$
and then
$$\{\alpha+\epsilon_n\varphi=c_n\}\cap\Sigma^1=\emptyset\;.$$
The function $\alpha+\epsilon_n\phi$ is minimal as well, hence $X_n=\{\alpha+\epsilon_n\phi<c_n\}$ is a relatively compact strictly pseudoconvex domain (which we can suppose smoothly bounded up to some small perturbation of $c_n$), hence it is a modification of a Stein surface.
Moreover, $X_n$ is Runge in $X_{n+1}$, therefore $X$ itself is holomorphically convex and, possessing a plurisubharmonic function which is strictly plurisubharmonic at some point, it has to be a modification of a Stein space as well: this is the case 1).

If such a sequence does not exist, then there is $c_0\in\R$ such that for every $c>c_0$ the intersection $\{\alpha=c\}\cap\Sigma^1$ is not empty. 

Suppose that there exists $c_1>c_0$, regular value for $\alpha$, such that there is a connected componet $Y$ of $\{\alpha=c_1\}$ which does not contain compact complex curves and such that $Y\cap\Sigma^1\neq\emptyset$; we apply Theorem \ref{LUCY}, obtaining that $Y$ is Levi flat and by Corollary \ref{FUN}, part d), we get that 
$\p\alpha\wedge \bar{\p}\alpha\wedge\p\bar{\p}\alpha=0$
on the whole of $X$, hence every regular level of $\alpha$ is Levi flat and by Lemma \ref{OLD} every such level has a neighbourhood where it is given as the zero of a pluriharmonic function
Therefore, no regular level can contain a compact complex curve, otherwise, by Theorem \ref{BIR}, every level would, so all the regular levels are Levi flat and containing no complact complex curves, hence by Corollary \ref{FUN} part c), their connected components are foliated with dense complex leaves. This is case 3)

If every regular level  $\{\alpha=c\}$, for $c>c_0$,  contains a compact complex curve, then $X$ contains uncountably many compact complex curves; by \cite[Proposition 9 and 7]{Ni} there exist $V$ a neighbourhood of one of these curves and $f:V\to\C$ a holomorphic function which induces on $V$ a foliation in compact curves. Applying Theorem \ref{BIR} with $V=W$ and $\chi=\mathsf{Re}f$, after shrinking $V$, if needed, to avoid critical points for $f$, we conclude that $X$ is proper over a non-compact (possibly singular) complex curve, which is case 2).\end{proof}

It is easy to show that in case 1), $\Sigma^1$ is a (at most) countable union of compact complex curves (the exceptional divisor of the modification), whereas in case 2) and 3) it is obvious that $\Sigma^1=X$. We note that we can tell apart these two cases quite easily by looking at  global holomorphic functions: in case 3), $\mathcal{O}(X)=\C$, wherease in case 2) there always exist global non-constant holomorphic functions.

\begin{corol}\label{MAR2}
Let $X$ be a weakly complete $2$-dimensional normal complex space and $\alpha\!:X\to\R$ a real analytic plurisubharmonic exhaustion function. Then three cases can occur:
\begin{enumerate}
\item[1)] $X$ is a modification of a Stein space;
\item[2)] $X$ is proper over a (possibly singular) complex curve $C$;
\item[3)] $X$ is of Grauert type.
\end{enumerate}
\end{corol}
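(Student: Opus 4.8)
The plan is to deduce Corollary \ref{MAR2} from the manifold case Theorem \ref{MAR1} by passing to a resolution of singularities. Since $X$ is normal of dimension $2$, its singular locus $\Sigma=\mathrm{Sing}(X)$ is a discrete set of points. Let $\pi\colon\tilde X\to X$ be a resolution, so that $\tilde X$ is a connected complex surface, $\pi$ is proper and surjective, $\pi$ restricts to a biholomorphism $\tilde X\setminus E\to X\setminus\Sigma$ with $E=\pi^{-1}(\Sigma)$ a compact analytic set, and each fibre $\pi^{-1}(x)$ (for $x\in\Sigma$) is compact and connected. First I would verify that $\tilde\alpha:=\alpha\circ\pi$ makes $\tilde X$ weakly complete of the required type: $\tilde\alpha$ is plurisubharmonic (pullback of a plurisubharmonic function by a holomorphic map), real analytic (composition of the real analytic $\alpha$ with the holomorphic, hence real analytic, $\pi$), and exhaustive (since $\pi$ is proper, $\{\tilde\alpha\le c\}=\pi^{-1}\{\alpha\le c\}$ is compact). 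I emphasise that one cannot simply apply Theorem \ref{MAR1} to the manifold $X\setminus\Sigma$, because $\alpha|_{X\setminus\Sigma}$ is no longer an exhaustion; this is exactly the reason for passing to the proper modification $\pi$.

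Applying Theorem \ref{MAR1} to $\tilde X$ places it in one of the three cases, and the task is to transfer each case down along $\pi$. The underlying descent principle is the following: if $g\colon\tilde X\to T$ is holomorphic and constant on every fibre of $\pi$, then $g$ descends to a map $\bar g\colon X\to T$ with $g=\bar g\circ\pi$; moreover $\bar g$ is holomorphic (it is holomorphic off the discrete set $\Sigma$ and continuous, hence extends by normality) and proper whenever $g$ is, since $\bar g^{-1}(K)=\pi(g^{-1}(K))$.

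In cases $1$ and $2$ the surface $\tilde X$ is holomorphically convex, so it admits a Remmert reduction $R\colon\tilde X\to S$ onto a Stein space $S$. Each fibre $\pi^{-1}(x)$ is compact and connected, hence its image $R(\pi^{-1}(x))$ is a compact connected analytic subset of the Stein space $S$, i.e. a single point; thus $R$ is constant on the fibres of $\pi$ and, by the descent principle, yields a proper holomorphic map $\bar R\colon X\to S$. If $\dim_{\C}S=2$ then $\bar R$, being proper and bimeromorphic (a composition of the modifications $R$ and $\pi^{-1}$), exhibits $X$ as a modification of the Stein space $S$, which is case $1$; if $\dim_{\C}S=1$ then $S$ is a Stein curve and $X$ is proper over the (possibly singular) curve $C=S$, which is case $2$. (The value $\dim_{\C}S=0$ is excluded because $X$ is non-compact.)

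Finally, suppose $\tilde X$ is of Grauert type (case $3$), so $\mathcal{O}(\tilde X)=\C$. As $\pi$ is surjective the pullback $\pi^{*}\colon\mathcal{O}(X)\to\mathcal{O}(\tilde X)$ is injective, whence $\mathcal{O}(X)=\C$. For every regular value $c$ of $\alpha$ with $c\notin\alpha(\Sigma)$ --- that is, for all but a discrete set of values --- the level $\{\alpha=c\}$ avoids $\Sigma$, and $\pi$ identifies it, together with the nearby values of $\tilde\alpha$, biholomorphically with the corresponding level of $\tilde\alpha$ lying off $E$; hence each of its connected components is Levi flat and foliated by dense complex leaves, just as for $\tilde X$. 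Thus $X$ is of Grauert type, which is case $3$. The step I expect to be the main obstacle is this descent analysis: checking that the Remmert reduction (and, in case $2$, the fibration over the curve) is genuinely constant on the exceptional fibres and descends to a holomorphic proper map on the normal space $X$, together with the care needed in case $3$ to transfer the dense-leaf foliation only over the regular part $X\setminus\Sigma$ and only for generic regular values.
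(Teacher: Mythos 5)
Your proposal takes exactly the paper's route: the paper's entire proof reads ``Desingularize $X$ (cfr.\ Hironaka) and apply Theorem \ref{MAR1} to the desingularization.'' Your write-up is correct and simply makes explicit the details the paper leaves implicit --- that $\alpha\circ\pi$ is a real analytic plurisubharmonic exhaustion of the resolution, and that each of the three cases descends along $\pi$ (via the Remmert reduction being constant on the compact connected exceptional fibres in cases 1 and 2, and via the identification of generic levels off the discrete singular set in case 3).
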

\begin{proof}
Desingularize $X$ (cfr. \cite{Hir}) and apply Theorem \ref{MAR1} to the desingularization. 
\end{proof}

\section{Existence of proper pluriharmonic functions - I}\label{EPPAF}
\noindent Trough Sections 5, 6 we assume that $X$ is a Grauert type surface, i.e. $X$ satisfies conditions of case 3) of Theorem \ref {MAR1}.

We want to prove the following
\begin{teorema}\label{SCOTTISH}
Let $X$ be a Grauert type surface with a real analytic plurisubharmonic exhaustion function $\alpha$. In particular $\Sigma^1=X$ and
$$
\p\alpha\wedge\bar\p\alpha\wedge\p\bar\p\alpha=\p\bar\p\alpha\wedge\p\bar\p\alpha=0.
$$ 
If $\rdim {\rm Crt}(\alpha)\le 2$, let $Z$ be the absolute minimum set of $\alpha$, then 
\begin{itemize}
\item[1)] $Z$ is the union of finitely many complex curves;
\item[2)] there exists an increasing convex function $\lambda$ such that $\chi=\lambda\circ(\alpha_{\vert X\setminus Z})$ is pluriharmonic and proper.

\end{itemize}
\end{teorema}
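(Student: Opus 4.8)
The plan is to treat the two assertions separately, dealing first with the geometry of the minimum set $Z$ and then constructing the pluriharmonic function on its complement.

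For part 1), recall that $\min_X\alpha=0$ by \eqref{Osric}, so $Z=\{\alpha=0\}$ is a compact real analytic set, every point of which is critical; hence $Z\subseteq\Crt(\alpha)$ and $\rdim Z\le 2$. Since $\Sigma^1=X$, Theorem \ref{lmp} shows that $Z=\{\alpha=0\}\cap\Sigma^1$ has the local maximum property. I would first discard the low-dimensional strata: an isolated point of $Z$ is impossible, since on a small ball meeting $Z$ only at that point the maximum of a plurisubharmonic function cannot be forced onto an empty boundary trace; and a real analytic piece of $Z$ of real dimension $1$, or of real dimension $2$ with a totally real tangent plane, also violates the local maximum property, because on such a piece one exhibits a pluriharmonic function — the real part of a suitable holomorphic function, modelled on $-\Re(z^2)$ — with a strict interior maximum along $Z$. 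Consequently every manifold point of $Z$ is two-real-dimensional with a $J$-invariant tangent plane, so the regular part of $Z$ is a complex curve; real analyticity together with compactness then forces $Z$ to be a finite union of compact complex curves.

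For part 2), I would work on the open dense connected set $X\setminus\Crt(\alpha)$, which is connected because $\Crt(\alpha)$ has real codimension at least two. There $\p\alpha\wedge\bar\p\alpha\neq 0$, and the hypotheses $\p\bar\p\alpha\wedge\p\bar\p\alpha=0$ and $\p\alpha\wedge\bar\p\alpha\wedge\p\bar\p\alpha=0$ (equivalently Corollary \ref{FUN}(d,e)) let me write $\p\bar\p\alpha=\mu\,\p\alpha\wedge\bar\p\alpha$, with $\mu$ real analytic, $\mu\ge 0$ by plurisubharmonicity, and $d\mu\wedge d\alpha=0$. Granting the global factorization $\mu=m\circ\alpha$ discussed below, a direct computation shows that $\chi=\lambda\circ\alpha$ is pluriharmonic if and only if $\lambda''(t)+m(t)\lambda'(t)=0$. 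This is a first order linear ODE in $\lambda'$ with real analytic coefficient, whose solution $\lambda'(t)=\exp\big(-\int_{t_0}^{t}m(s)\,ds\big)>0$ is real analytic and positive; integrating yields the desired increasing $\lambda$, and $\chi=\lambda\circ\alpha$, pluriharmonic on $X\setminus\Crt(\alpha)$, extends by continuity and real analyticity to a pluriharmonic function on all of $X\setminus Z$. The Hopf example, where $\alpha=\phi$, $m(t)=1/t$ and $\lambda=\log$, is the model to keep in mind.

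The main obstacle is precisely the global factorization $\mu=m\circ\alpha$: since a regular level $\{\alpha=c\}$ may a priori be disconnected, one must check that $\mu$ takes the same value on all its connected components, the relation $d\mu\wedge d\alpha=0$ guaranteeing only that $\mu$ is \emph{locally} a function of $\alpha$. I would settle this by patching the local pluriharmonic models: by Lemma \ref{OLD} and Corollary \ref{FUN} each regular Levi flat level carries, in a neighbourhood, a pluriharmonic defining function $\chi_0$ with $\alpha=\varphi\circ\chi_0$, and $\mu=\varphi''/(\varphi')^2$ depends only on the germ of $\varphi$; normalizing these conjugates and following the monodromy along $X\setminus Z$ — the transition maps being affine, as in the proofs of Lemma \ref{PL2bis} and Theorem \ref{BIR} — forces the local data to coincide, which is exactly the sought factorization. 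A secondary technical point is properness: $\chi$ is proper on $X\setminus Z$ as a map to $\R$ only if $\lambda$ sweeps out all of $\R$, i.e. $\lambda(0^+)=-\infty$ and $\lambda(+\infty)=+\infty$. These limits I would read off from the behaviour of $m$ as $\alpha\to 0$, where the Levi form degenerates along $Z$, and as $\alpha\to+\infty$; once they hold, properness of $\chi$ follows from properness of the exhaustion $\alpha$ restricted to $X\setminus Z$.
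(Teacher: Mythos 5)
Your strategy for part 2 is essentially the paper's own: write $\p\bar\p\alpha=\mu\,\p\alpha\wedge\bar\p\alpha$ off $\Crt(\alpha)$ (Lemma \ref{LAURA2}), get $d\mu\wedge d\alpha=0$ (Corollary \ref{FUN}(e)), factor $\mu=m\circ\alpha$ globally, and solve the ODE $\lambda''+m\lambda'=0$; this is precisely the content of Lemma \ref{LIS} assembled in Lemma \ref{MAR} with $U=X$. Your patching argument for the factorization is a workable variant of Assertion 2 in the proof of Lemma \ref{LIS} (there it is done by a maximal-interval argument plus real analytic continuation; your chain-of-local-models version works because analytic continuation of the one-variable functions $m$ along overlapping intervals of $\R$ has trivial monodromy). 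Your part 1 — local maximum property of $Z$ via Theorem \ref{lmp}, then exclusion of isolated, one-dimensional and totally real pieces — is an acceptable elementary substitute for the paper's appeal to Lemma \ref{ATT}, and has the merit of being independent of part 2.

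The genuine gap is properness, specifically the limit $\chi(p)\to-\infty$ as $p\to Z$, i.e. $\lambda(0^+)=-\infty$, i.e. $\int_0 m(\sigma)\,d\sigma=+\infty$. You propose to ``read this off from the behaviour of $m$ as $\alpha\to 0$'', but nothing in your construction controls $m$ near $0$: $m$ is produced by local linear algebra and analytic continuation, and if $m$ were integrable at $0$ (for instance bounded), then $\lambda'$ would stay bounded, $\lambda(0^+)$ would be finite, and $\chi$ would extend continuously up to $Z$ — so $\chi^{-1}$ of a compact interval around that boundary value would accumulate on $Z$ and $\chi$ would not be proper. This step requires a separate idea, which the paper supplies in the proof of Lemma \ref{MAR}, part iii): assuming $c_0=\lim_{p\to Z}\chi(p)>-\infty$, pick a regular point $p$ of $Z$ and a complex line $L$ transversal to $T_p(Z)$, so that $p$ is isolated in $L\cap Z$; then $\chi|_L$ is a bounded harmonic function near an isolated singularity, hence extends harmonically through $p$ with value $c_0$ there, producing a strict interior minimum of a harmonic function — a contradiction. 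This removable-singularity argument is the crux of properness (and in the paper it also drives part 1, by exhibiting $Z$ as the $-\infty$ set of a plurisubharmonic function before Lemma \ref{ATT} is applied); without it your argument only yields that $\lambda\circ\alpha$ is pluriharmonic on $X\setminus Z$, not that it is proper. On the other end, $\lambda(+\infty)=+\infty$, you are in the same position as the paper, whose proof of Lemma \ref{MAR} also addresses only the lower limit, so I do not count that against you.
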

From the existence of $\chi$ immediately follows that
\begin{corol}\label{SCOTTISH1}
The function $\alpha$ does not have local minimum points other then absolute minimum ones. The level sets of $\alpha$, except absolute minimum ones, have pure dimension {\rm 3}.
\end{corol}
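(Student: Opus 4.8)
The plan is to derive both statements from three elementary properties of the pluriharmonic function $\chi$ furnished by Theorem \ref{SCOTTISH}, namely the maximum and minimum principles and the identity principle for pluriharmonic (hence real-analytic) functions. First I would record two preliminary facts about $\lambda$. Since $Z=\{\alpha=0\}$ is a non-empty proper analytic subset of real codimension $2$, the manifold $X\setminus Z$ is connected and non-compact, so the proper function $\chi$ is non-constant. Moreover, if $\lambda$ were constant on some subinterval $[a,b]\subset(0,+\infty)$ with $a<b$, then $\chi=\lambda\circ\alpha$ would be constant on the non-empty open set $\{a<\alpha<b\}$, and the identity principle would force $\chi$ to be globally constant, a contradiction; hence $\lambda$ is strictly increasing on $\alpha(X\setminus Z)=(0,+\infty)$. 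In particular, recalling $\min_X\alpha=0$, for every $c>0$ the level $\{\alpha=c\}$ lies in $X\setminus Z$ and coincides with $\{\chi=\lambda(c)\}$.

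For the first assertion I would argue by contradiction. Suppose $p\in X\setminus Z$ is a local minimum point of $\alpha$. Since $\lambda$ is increasing, $p$ is then a local minimum of $\chi=\lambda\circ\alpha$. Writing $\chi=\mathsf{Re}\,F$ with $F$ holomorphic near $p$, a local minimum would force the open map $F-F(p)$ to take values in the closed half-plane $\{\mathsf{Re}\,\zeta\ge 0\}$ while sending $p$ to the boundary value $0$, which is impossible unless $F$, and hence $\chi$, is constant near $p$; by the identity principle $\chi$ would then be constant on all of $X\setminus Z$, contradicting non-constancy. Thus $\alpha$ has no local minimum in $X\setminus Z$, so every local minimum lies in $Z$ and is an absolute minimum.

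For the second assertion I would fix $c>0$ and set $N=\{\alpha=c\}=\{\chi=\lambda(c)\}$, a real-analytic subset of $X\setminus Z$. As $\chi$ is non-constant real-analytic, $N$ has empty interior, so $\rdim N\le 3$, and it remains only to exclude local components of dimension $\le 2$. The key step is the following minimum-principle argument: suppose $\rdim(N\cap U)\le 2$ for some small ball $U\ni p$. A closed real-analytic set of dimension $\le 2$ does not disconnect the $4$-dimensional ball $U$, so $U\setminus N$ is connected and $\chi-\lambda(c)$ has constant sign there, say $\chi>\lambda(c)$. Then $\chi\ge\lambda(c)$ on $U$ with equality at the interior point $p$, so $\chi$ attains a local minimum at $p$; by the minimum principle $\chi\equiv\lambda(c)$ near $p$, whence $N$ contains an open set, contradicting $\rdim(N\cap U)\le 2$ (the sign $\chi<\lambda(c)$ is handled symmetrically via the maximum principle). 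Therefore $N$ has local dimension exactly $3$ at each of its points, i.e. it is of pure dimension $3$.

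The only genuine difficulty is this purity statement at the critical points of $\chi$, where $\Crt(\chi)=\{\partial F=0\}$ is a complex-analytic set of real dimension $\le 2$ and the level set could a priori acquire spurious lower-dimensional pieces. The argument above shows that this cannot occur precisely because any such piece would realize a local extremum of the non-constant pluriharmonic function $\chi$, which the maximum/minimum principle forbids; everything else is a formal consequence of the identity principle together with the strict monotonicity of $\lambda$.
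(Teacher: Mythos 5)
Your proof is correct and takes essentially the same route as the paper: the paper derives the corollary directly from the existence of the proper pluriharmonic function $\chi$ of Theorem \ref{SCOTTISH} (indeed, the absence of local minima off the minimum set is already stated as part i) of Lemma \ref{MAR}, proved there by the same minimum-principle observation you use). Your write-up merely supplies the details the paper treats as immediate — strict monotonicity of $\lambda$, the open-mapping/minimum-principle argument, and the codimension-two connectedness argument for purity of dimension $3$ — all of which are sound and consistent with the paper's own toolkit.
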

In the remainder of this section we prove some auxiliary lemmas.

\begin{lemma}\label{LAURA2}. Let $W$ be a complex surface, and $\beta:W\to\R$ a real analytic
function, such that
$$
\p\beta\wedge{\bar\p}\beta\wedge\p{\bar\p}\beta=0,\>\>
\p{\bar\p}\beta\wedge\p{\bar\p}\beta=0
$$
in $W$. Then there are a real analytic function $\mu:W\smallsetminus {\rm Crt}(\beta)\to\R$, such that
$$
\p{\bar\p}\beta=\mu\,\p\beta\wedge{\bar\p}\beta
$$
in $W\smallsetminus{\rm Crt}(\beta)$.
 \end{lemma}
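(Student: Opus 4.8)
The plan is to read the two hypotheses as pointwise conditions on the complex Hessian and to reduce the statement to elementary Hermitian linear algebra in each tangent space. Fix local holomorphic coordinates $z_1,z_2$ and abbreviate $\beta_j=\p\beta/\p z_j$ and $\beta_{j\bar k}=\p^2\beta/\p z_j\p\bar z_k$. Then $\p\beta\wedge\bar\p\beta$ is the $(1,1)$-form with coefficient matrix $G=(\beta_j\overline{\beta_k})$ and $\p\bar\p\beta$ is the $(1,1)$-form with coefficient matrix $L=(\beta_{j\bar k})$; here $L$ is the Levi form and $G$ is the rank-one positive semidefinite form $u\mapsto|\p\beta(u)|^2$, whose kernel is exactly the complex line $\ker\p\beta$. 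Since $\beta$ is real, $\Crt(\beta)=\{\p\beta=0\}$, so on $W\setminus\Crt(\beta)$ the gradient $g=(\beta_1,\beta_2)$ is nonzero and $G\neq 0$. On a surface the wedge of two $(1,1)$-forms is governed by the polarized determinant $D(A,B)=A_{1\bar1}B_{2\bar2}+A_{2\bar2}B_{1\bar1}-A_{1\bar2}B_{2\bar1}-A_{2\bar1}B_{1\bar2}$, which satisfies $D(A,A)=2\det A$.

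Next I would translate the two vanishing conditions. Because $\p\bar\p\beta\wedge\p\bar\p\beta$ has coefficient $D(L,L)=2\det L$, the second hypothesis says precisely that $\det L=0$, i.e. the Levi form has rank at most one at every point. Because $\p\beta\wedge\bar\p\beta\wedge\p\bar\p\beta$ has coefficient $D(G,L)$, I would verify by the direct substitution $G_{j\bar k}=\beta_j\overline{\beta_k}$ the identity $D(G,L)=L(v,v)$, where $v=(\beta_2,-\beta_1)$ spans $\ker\p\beta$; hence the first hypothesis says exactly that the Levi form vanishes on the complex tangent line $\ker\p\beta$ to the level sets of $\beta$.

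With these two facts the pointwise claim is immediate. At a point of $W\setminus\Crt(\beta)$ the form $L$ is Hermitian of rank at most one. If $L=0$ take $\mu=0$; otherwise $L(u,u)=\lambda|\ell(u)|^2$ for a real $\lambda\neq0$ and a nonzero $\C$-linear functional $\ell$, and the condition $L(v,v)=0$ forces $\ell(v)=0$, so $\ker\ell$ is the line $\C v=\ker\p\beta$. Two nonzero functionals with the same kernel are proportional, so $\ell$ is a scalar multiple of $\p\beta$ and therefore $L$ is a real multiple of $G$. Thus at every noncritical point there is a unique real scalar $\mu$ with $\p\bar\p\beta=\mu\,\p\beta\wedge\bar\p\beta$; uniqueness holds because $G\neq0$, and $\mu$ is real because $L,G$ are Hermitian and $\mu G=\overline{\mu}\,G$.

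Finally I would check that $\mu$ is real analytic on $W\setminus\Crt(\beta)$ by exhibiting it as a quotient with nonvanishing denominator. Comparing the $(j,j)$ entries of $\mu G=L$ gives $\mu=\beta_{j\bar j}/|\beta_j|^2$ on the open set $\{\beta_j\neq0\}$; since $W\setminus\Crt(\beta)=\{\beta_1\neq0\}\cup\{\beta_2\neq0\}$ and both expressions compute the same intrinsic scalar where they overlap, $\mu$ is real analytic throughout (equivalently one may take $\mu=\mathrm{tr}(LG)/(|\beta_1|^2+|\beta_2|^2)^2$). I expect the only delicate step to be the algebraic identity $D(G,L)=L(v,v)$ identifying the first hypothesis with the vanishing of the Levi form along $\ker\p\beta$; everything after that is routine rank-one linear algebra together with the nonvanishing-denominator argument.
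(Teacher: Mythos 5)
Your proof is correct and follows essentially the same route as the paper's: both reduce the lemma to pointwise Hermitian linear algebra at noncritical points, the paper by diagonalizing the complex Hessian $\p\bar\p\beta(p)$ and running a case analysis on its real eigenvalues $a,c$ (the hypotheses giving $ac=0$ and $|r|^2c+|s|^2a=0$), you by the equivalent invariant observation that the hypotheses say the Levi form has rank at most one and vanishes on $\ker\p\beta$. If anything, your final step is more complete than the paper's, which declares the real analyticity of $\mu$ obvious, whereas you exhibit $\mu$ explicitly as a quotient of real analytic functions with nonvanishing denominator.
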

\begin{proof}
Observe that we do not assume that $\beta$ is plurisubharmonic. The real-analiticity property of $\mu$ is obviuos, once we establish the elementary fact that for every $p$ such that $\p\beta(p)\neq 0$ there is
$\mu(p)\in\R$ satisfying
$$
\p\bar\p\beta(p)=\mu(p)(\p\beta\wedge\bar\p\beta)(p).
$$
We sketch the details. Point $p$ is fixed. If $\tilde{z},\tilde{w}$ are any complex local coordinates near $p$, the complex Hessian of $\beta$ can be diagonalized by a linear change of coordinates $\tilde{z},\tilde{w}$ into some $z,w$, so that
$$
\p\bar\p\beta(p)=a(dz\wedge d\bar z)(p)+c(dw\wedge d\bar w)(p),
$$
with $a,c$ are real. We have 
$$
0=(\p\bar\p\beta\wedge \p\bar\p\beta)(p)=ac(dz\wedge d\bar z\wedge dw\wedge d\bar w)(p),
$$
$$
\p\beta(p)=(rdz+sdw)(p), r,s\in\C, (r,s)\neq (0,0),
$$
$$
0=(\p\bar\p\beta\wedge \p\bar\p\beta)(p)=(r\bar rc+s\bar sa)(dz\wedge d\bar z\wedge dw\wedge d\bar w)(p),
$$ 
so $ac=0$ and $(r\bar rc+s\bar sa)=0.$

If $a=0,b\neq 0$ then $r\bar rc=0$, $r=0$. Thus
$$
d\beta\wedge d\bar \beta)(p)=s^2(dw\wedge d\bar w)(p),\, s\neq 0,
$$
and so:
$$
(\p\bar\p\beta)(p)=c(dw\wedge d\bar w)(p)=\mu(d\beta\wedge d\bar \beta)(p)
$$
with $\mu(p)=c/s^2.$

The case $a\neq 0$, $c=0$ is completely analogous, with roles of $z$, $w$ being interchanged.

If $a=c=0$, take $\mu(p)=0.$
 \end{proof}
\begin{lemma}\label{LIS}
Let $W$ be a connected complex surface and $\beta:W\to\R$,
$\mu:W\smallsetminus {\rm Crt}(\beta)\to\R$  two real analytic functions such that 
\begin{enumerate}
\item[1)] ${\rm dim}_\R\,{\rm Crt}(\beta)\le 2$;
\item[2)] $\p{\bar\p}\beta=\mu \p\beta\wedge{\bar\p}\beta$, on
$W\!\smallsetminus\!{\rm Crt}(\beta)$; 
\item[3)] $d\mu\wedge d\beta=0$ on $W\smallsetminus {\rm Crt}(\beta)$;
\item[4)] $\beta(W)=\beta(W\smallsetminus {\rm Crt}(\beta))$.
\end{enumerate}
Then there is a nonconstant pluriharmonic function $\chi:W\to\R$ and a real
analytic function $\theta:\beta(W)\to\R$ such that
\begin{enumerate}
\item[i)] $\theta'(t)>0$ for $t\in\beta(W)$;
\item[ii)] $\chi=\theta\circ\beta$ (and so $d\chi\wedge d\beta=0$ in $W$);
\item[iii)] any pluriharmonic function $\chi^\ast:W\to\R$ such that ${\rm d}\chi\ast\wedge {\rm d}\beta=0$ on $W$, must be of the form $\chi^\ast=c\chi+c_1$, where $c, c_1$ are real constants.
\end{enumerate}
\end{lemma}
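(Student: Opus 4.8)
The plan is to produce $\theta$ as the solution of an ordinary differential equation read off from hypotheses (2) and (3), and then to set $\chi=\theta\circ\beta$. The starting point is the observation that, by (3), the real analytic $1$-form $\mu\,d\beta$ is closed on $W\smallsetminus\Crt(\beta)$, since $d(\mu\,d\beta)=d\mu\wedge d\beta=0$. Hypothesis (3) also says that $d\mu$ annihilates every vector tangent to a regular level set of $\beta$, so $\mu$ is constant on each connected component of each such level set; locally, where $d\beta\neq0$, this means $\mu=m\circ\beta$ for a real analytic function $m$ of one variable. The first, and I expect the main, step is to globalize this: to show that these local representations fit together into a single real analytic $m$ defined on the interval $\beta(W)$. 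I would do this by studying the constant-rank-$1$ real analytic map $(\beta,\mu):W\smallsetminus\Crt(\beta)\to\R^2$, whose image is a connected real analytic curve projecting as a local homeomorphism onto the interval $\beta(W\smallsetminus\Crt(\beta))$; a monotonicity argument then forces this projection to be injective, so the image is the graph of a function $m$. Finally hypothesis (4) guarantees $\beta(W\smallsetminus\Crt(\beta))=\beta(W)$, so $m$ is defined on all of $\beta(W)$ and $\mu=m\circ\beta$ off $\Crt(\beta)$. This globalization, rather than the subsequent computations, is where the real work lies.

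Granting $\mu=m\circ\beta$, I would define $\theta$ on $\beta(W)$ as the solution of the linear second order equation $\theta''+m\,\theta'=0$ normalised by $\theta'(t_0)>0$ at some interior point $t_0$; explicitly
$$\theta'(t)=\exp\!\Big(-\int_{t_0}^{t}m(s)\,ds\Big),\qquad \theta(t)=\int_{t_0}^{t}\theta'(s)\,ds,$$
which is real analytic with $\theta'>0$, giving (i). A direct computation using (2) then yields, on $W\smallsetminus\Crt(\beta)$,
$$\p\bar\p(\theta\circ\beta)=\big(\theta''(\beta)+\theta'(\beta)\,\mu\big)\,\p\beta\wedge\bar\p\beta=0,$$
so $\chi:=\theta\circ\beta$ is pluriharmonic off $\Crt(\beta)$. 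Since $\chi$ is real analytic on all of $W$ and $\rdim\Crt(\beta)\le2$ by (1), the set $\Crt(\beta)$ is nowhere dense, so the real analytic form $\p\bar\p\chi$ vanishes on the dense open set $W\smallsetminus\Crt(\beta)$, hence on all of $W$; thus $\chi$ is pluriharmonic on $W$ and $d\chi\wedge d\beta=0$, which is (ii). It is nonconstant because $\theta'>0$ and $\beta$ is nonconstant (were $\beta$ constant, $\Crt(\beta)=W$ would contradict (1)).

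For (iii) I would argue as follows: let $\chi^\ast$ be pluriharmonic with $d\chi^\ast\wedge d\beta=0$. On $W\smallsetminus\Crt(\beta)$ one has $d\chi\neq0$ (as $\theta'>0$ and $d\beta\neq0$) and $d\chi^\ast\wedge d\chi=0$, so locally $\chi^\ast=H\circ\chi$ for a function $H$ of one variable. Pluriharmonicity of both functions gives $\p\bar\p(H\circ\chi)=(H''\circ\chi)\,\p\chi\wedge\bar\p\chi=0$; since $\chi$ is real, $d\chi\neq0$ forces $\p\chi\wedge\bar\p\chi\neq0$, whence $H''\equiv0$, i.e. $\chi^\ast=c\chi+c_1$ locally. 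Because $\Crt(\beta)$ has real codimension at least $2$, its complement $W\smallsetminus\Crt(\beta)$ is connected, so the constants $c,c_1$ agree on all local pieces; extending by continuity across $\Crt(\beta)$ gives $\chi^\ast=c\chi+c_1$ on $W$. As before, the only delicate point is the globalization, the local identities being immediate.
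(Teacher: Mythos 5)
Your proposal follows the same skeleton as the paper's proof: write $\mu=m\circ\beta$ for a single real analytic function $m$ on the interval $\beta(W)$, solve $\theta''+m\theta'=0$ explicitly, obtain pluriharmonicity of $\chi=\theta\circ\beta$ off $\Crt(\beta)$ and extend across $\Crt(\beta)$ by density and continuity, and deduce (iii) from the affine structure of the solution space. Your handling of the ODE, of the extension, and of (iii) is correct; indeed your (iii), via the local relation $\chi^\ast=H\circ\chi$, $H''=0$, and connectedness of $W\smallsetminus\Crt(\beta)$, is a bit more direct than the paper's, which reapplies the globalization assertion to $\chi^\ast$ itself. The gap is in the step you yourself single out as ``where the real work lies'': the globalization of $m$. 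You assert that the image $S$ of $(\beta,\mu)$ is a connected real analytic curve and that the first projection restricted to $S$ is a local homeomorphism; granting these, the monotonicity/classification argument does give injectivity. But those two assertions are not consequences of the constant-rank structure and connectedness — they are essentially equivalent to the conclusion. The danger is exactly that two connected components of one level set $\{\beta=t\}$ carry different values of $\mu$; then $S$ has two branches over $t$, which may cross or accumulate on one another, and at such points $S$ is not a $1$-manifold and the projection is not a local homeomorphism. Nothing in your sketch rules this out.

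That no soft argument can close this gap is shown by a smooth model satisfying 1), 3), 4) (the globalization step uses nothing else): on the slit plane $W_0=\R^2\smallsetminus\big([0,+\infty)\times\{0\}\big)$ take $\beta(x,y)=x$ and $\mu(x,y)=\phi(x)$ for $y<0$, $\mu(x,y)=0$ otherwise, where $\phi$ is smooth, $\phi\equiv 0$ on $(-\infty,0]$ and $\phi(x)=e^{-1/x}$ for $x>0$. Then $\mu$ is smooth, $d\mu\wedge d\beta=0$, and $\mu$ is constant on every connected component of every level set of $\beta$, yet $\mu$ is not a function of $\beta$: the image of $(\beta,\mu)$ consists of two branches merging tangentially at the origin, where it is neither a $1$-manifold nor locally injective under the projection. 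Hence real analyticity must be used in an essential way at precisely this point, through the identity theorem, and your argument never invokes it. This is what the paper does: fixing a base point $p_0$, it considers for each open interval $\mathcal I\subset\beta(W\smallsetminus\Crt(\beta))$ the connected component $W_{\mathcal I}$ of $\beta^{-1}(\mathcal I)\smallsetminus\Crt(\beta)$ containing $p_0$, takes a maximal interval on which $\mu=m_{\mathcal I}\circ\beta$ holds on $W_{\mathcal I}$, and pushes past a hypothetical endpoint $b^\ast$ by straightening $\beta$ in local coordinates near a boundary point (where level sets are connected, so $\mu$ is locally a one-variable function of $\beta$), gluing the two one-variable functions on the overlapping subinterval, and then propagating the identity $m\circ\beta=\mu$ from the small overlap to the whole larger component by the identity theorem for real analytic functions. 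Some continuation argument of this type is what your step 2 needs to become a proof.
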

\begin{proof}
The following is well known. 

{\sf Assertion 1}. If $d\mu\wedge d\beta=0$ in $W_0=W\smallsetminus {\rm Crt}(\beta)$,
then $\mu$ is constant on each connected component of every level set of
$\beta$.

{\sf Assertion 2}. If ${\rm Crt}(\beta)$ has topological dimension $\le 2$, then
there is a real analytic function $m:\beta(W_0)\to\R$ such that
$\mu(p)=m(\beta(p))$, $p\in W_0$. If, in addition, $\beta$ does not have local
minimum or local maximum points in $W$, then $\beta(W)=\beta(W_0)$ and 
$\p{\bar\p}\beta=(m\circ\beta)\p\beta\wedge{\bar\p}\beta$ in $W$.

Observe first, that since $W_0$ does not contain critical points, and since it
is connected due to the fact that ${\rm dim}_\R\left({\rm Crt}(\beta)\right)\le 2$,
$\beta(W_0)$ is an open (perharps unbounded) interval, say
$\beta(W_0)=(a,b)\subseteq (-\infty,+\infty)$. Fix a point $p_0\in W_0$. Consider the
family of open intervals $\mathcal I\subset (a,b)$ such that there is a real
analytic function $m_{\mathcal I}:{\mathcal I}\to\R$ satisfying 
$m_{\mathcal I}\circ\beta_{|W_{{\mathcal I}}}=\mu_{|W_{\mathcal I}}$, where 
$W_{{\mathcal I}}$ is the connected open component of the open set 
$\{p\in W_0:\beta(p)\in {\mathcal I}\}$ that contains $p_0$. It is evident
that for fixed ${\mathcal I}$ function $m_{\mathcal I}$ is unique, and so if 
${\mathcal I}_1\subset {\mathcal I}_2\subset (a,b)$ then 
${m_{\mathcal I_2}}_{|{\mathcal I_1}}=m_{{\mathcal I_1}}$. Hence, by Zorn
Lemma, there is a maximal interval with this property, assuming that there is any
nonempty interval. Denote it by ${\mathcal I}^\ast(a^\ast,b^\ast$ and ${\mathcal I}^\ast(b^\ast,b^\ast)=\varnothing$ where $b^\ast=\beta(p_0)$ if no interval $\mathcal I$ exists. 

We claim that $(a^\ast,b^\ast)=(a,b)$. Suppose not and assume, without loss of generality, that $b^\ast <b$. (The case $a^\ast>a$ is analogous). (This also
covers the case when
$a^\ast=b^\ast$, ${\mathcal I}^\ast=\varnothing$.) Choose a point 
$p^\ast\in\{\beta=b^\ast\}\cap\overline{W_{{\mathcal I}}}\cap W_0$, in
particular $p^\ast\in b_{W_0}W_{\mathcal I}$ (the boundary of 
$W_{\mathcal I}$ in $W_0$). In case ${\mathcal I}^\ast=\varnothing$, we choose
$p^\ast=p_0$.

Since $\p\beta(p^\ast)\neq 0$, we can select a local (real)
coordinate system at $p^\ast$, say $(X_1(p),X_2(p),X_3(p),X_4(p))$ on a
neighbourhood $N_0$ such that $X_1(p)=\beta(p)$ and 
$X_1(p^\ast)=X_2(p^\ast)=X_3(p^\ast)=X_4(p^\ast)=0$. We can select an
$\varepsilon>0$ and a smaller neighbourhood $N$, $p^\ast\in N\subset N_0$, such
that
$$
\{(X_1(p),X_2(p),X_3(p),X_4(p)):p\in
N\}=(b^\ast-\epsilon,b^\ast+\epsilon)\times (-\epsilon,\epsilon)^3.
$$
Since $\{\beta=t\}\cap N=\{t\}\times (-\epsilon,\epsilon)^3$ is connected, and $d\mu\wedge
d\beta=0$, we obtain by Assertion 1 that $\mu_{|\{\beta=t\}\cap N}$ is constant
and so $\mu_{|\{\beta=t\}\cap N}=m_\epsilon(t)$. Since, in these
coordinates $m_\epsilon(t)=\mu(t,0,0,0)$, we obtain $m_\epsilon$ is analytic on 
$(b^\ast-\varepsilon,b^\ast+\varepsilon)$ and $m_\epsilon\circ\beta_{|N}=\mu_{|N}$.

In case ${\mathcal I}^\ast=\varnothing$ and $a^\ast=b^\ast$, $p^\ast=p_0$ thus yelds a
nonempty interval ${\mathcal I}^\ast=(b^\ast-\epsilon,b^\ast+\epsilon)$. Since
$m_\epsilon\circ\beta=\mu$ on $N$, and $\mu_\epsilon\circ\beta$, $\mu$ are analytic
functions, the identity must hold on $W_{(b^\ast-\epsilon,b^\ast+\epsilon)}$ as well. In
case $a^\ast<b^\ast$ we obtain 
$m_{\mathcal I}\cap N=(b^\ast-\epsilon,b^\ast+\epsilon)\times (-\epsilon,\epsilon)^3$ (using the local
coordinate system), and so $m_{\mathcal I}(t)=\mu(t,0,0,0)=m_\epsilon(t)$ for 
$t\in{\mathcal I}^\ast\cap (b^\ast-\epsilon,b^\ast+\epsilon)=(b^\ast-\epsilon,b^\ast)$. Thus 
$m_{{\mathcal I}^\ast}$ and $m_\epsilon$ define consistently a real analytic
function, call it $m_{{\mathcal I}_1}:{\mathcal I}_1\to\R$, where 
${\mathcal I}_1=(a^\ast,b^\ast+\epsilon)$. 
Since $m{{\mathcal I}_1}\circ\beta_{|W_{\mathcal I}}=\mu_{|W_{\mathcal I}}$,
and both $m{{\mathcal I}_1}$ and $\mu$ are defined and real analytic in 
$W_{{\mathcal I}_1}$, that contains $W_{\mathcal I}$, we obtain 
$m{{\mathcal I}_1}\circ\beta=\mu$ in $W_{{\mathcal I}_1}$.

We conclude that ${\mathcal I}^\ast=(a,b)$. Since $W_{(a,b)}=W_0$, we have
$\mu=m\circ\beta$ in $W_0$, the first of the Assertion 2.

Furthermore, if $\beta(W)=\beta(W_0)$, $m\circ\beta$ is defined and real
analytic in $W$, and since 
$\p{\bar\p}\beta=(m\circ\beta)\p\beta\wedge{\bar\p}\beta$ in $W_0$ as already
shown, the same identity must hold in $W$ (just by continuity).

This completes the proof of Assertion 2.

By Assertion 2, any (pluriharmonic) function $\chi^\ast:W\to\R$ such that 
${\rm d}\chi^\ast\wedge d\beta=0$ in $W$ must be of the form 
$\chi^\ast=\theta_1\circ\beta$, $\theta_1$ real analytic on $\beta(W)$. We
look now for a condition for $\theta_1$ so that $\theta_1\circ\beta$ be
pluriharmonic.
\begin{eqnarray*}
0=\p{\bar\p}(\theta_1\circ\beta)\!\!&=&\!\!(\theta_1''\circ\beta)
\p\beta\wedge{\bar\p}\beta+(\theta_1'\circ\beta)\p\wedge{\bar\p}\beta=\\ 
&=&\!\!(\theta_1''\circ\beta)\p\beta\wedge{\bar\p}\beta+
(\theta_1'\circ\beta)\p\wedge{\bar\p}\beta
\end{eqnarray*}
i.e.
$\left[(\theta_1''\circ\beta)(p)+(m\circ\beta)(p)\theta_1'(\beta(p))\right]
(\p\beta\wedge{\bar\p}\beta)(p)=0$.

Since $(\p\beta)(p)\neq 0$ for $p\in W\smallsetminus {\rm Crt}(\beta)=W_0$, we obtain the
condition for $t=\beta(p)$,
\begin{equation}\label{Claudius}\theta_1''(t)+m(t)\theta_1'(t)=0\;.\end{equation}
Since 
$$
\{t=\beta(p),\, p\in W\smallsetminus {\rm Crt}(\beta)\}=\beta(W)=(a,b)
$$
we have to solve \eqref{Claudius} on $(a,b)$. Applying standard techniques we
obtain, fixing a point $t_0\in (a,b)$
$$
\theta_1(t)=c\int_{t_0}^t\exp\left(-\int_{t_0}^\tau m(\sigma)\,d\sigma\right)+
c_1.
$$
Choosing as $\theta$ solution with $c=1$ and $c_1=0$ we have evidently
$\theta'(t)>0$ on $(a,b)$, $\chi=\theta\circ\beta$ satisfies ${\rm d}\chi\wedge
{\rm d}\beta=0$ and is pluriharmonic, and any other pluriharmonic solution
$\chi^\ast$ satisfying $d\chi^\ast\wedge
d\beta=0$, is equal to 
$$
\chi^\ast=\theta_1\circ\beta=(c\theta+c_1)\circ\beta=c\chi+c_1,
$$
as required.
\end{proof}
\begin{lemma}\label{MAR}
Let $X$ be a complex surface, $\alpha$ a real analytic plurisubharmonic exhaustion function. Let $U\subset X$ be a domain such that ${\rm dim}_\R\,{\rm Crt}(\alpha)\cap U\le 2$. If $\alpha_{\vert U}$ has an absolute minimum value $A_0$, let $Z_0=\{p\in U:\alpha(p)=A_0\}$ (otherwise $Z_0=\varnothing$), then
\begin{itemize}
\item[{\rm i)}] $\alpha$ does not have local minimum points on $U\setminus Z_0$; 
\item[{\rm ii)}] there is a proper pluriharmonic function $\chi:U\smallsetminus Z_0\to\R$ such that $\alpha=\lambda\circ\chi$ for some increasing convex function $\lambda$;
\item[iii)] if $Z_0\neq \varnothing$ 
$$
\lim\limits_{U\ni p\to Z_0}\chi(p)=-\infty.
$$
\end{itemize}
\end{lemma}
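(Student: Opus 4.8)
The plan is to deduce parts ii) and iii) from Lemma \ref{LIS} applied to $\beta=\alpha$ on the open set $W=U\setminus Z_0$, after first settling part i) by a separate maximum--principle argument. Note that $W$ is connected: $Z_0\subset\Crt(\alpha)$, so $\rdim Z_0\le2$, i.e. $Z_0$ has real codimension $\ge2$ in the $4$-manifold $U$, and removing it does not disconnect the domain.

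First I would verify the hypotheses 1)--3) of Lemma \ref{LIS}. Hypothesis 1) is the standing assumption $\rdim(\Crt(\alpha)\cap U)\le2$. Since $X$ is of Grauert type, $\alpha$ satisfies $\p\alpha\wedge\bar\p\alpha\wedge\p\bar\p\alpha=0$ and $\p\bar\p\alpha\wedge\p\bar\p\alpha=0$ on $U$, so Lemma \ref{LAURA2} yields a real analytic $\mu$ on $U\setminus\Crt(\alpha)$ with $\p\bar\p\alpha=\mu\,\p\alpha\wedge\bar\p\alpha$, and plurisubharmonicity forces $\mu\ge0$; this is hypothesis 2). For hypothesis 3), $d\mu\wedge d\alpha=0$, I would use that every regular level of $\alpha$ is Levi flat (Theorem \ref{LUCY}), so Lemma \ref{OLD} provides, near each regular point, a pluriharmonic defining function $\chi_0$ with the same level sets as $\alpha$; writing $\alpha=\varphi\circ\chi_0$ with $\varphi$ increasing and convex and computing exactly as in Corollary \ref{FUN}(e) exhibits $\mu$ as a local function of $\alpha$, whence $d\mu\wedge d\alpha=0$ on $U\setminus\Crt(\alpha)$. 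Arguing as in the proof of Lemma \ref{LIS} (Assertions 1 and 2) then gives $\mu=m\circ\alpha$ on $W_0:=W\setminus\Crt(\alpha)$ and, solving $\theta''+m\theta'=0$, a strictly increasing real analytic $\theta$ on the interval $(a,b)=\alpha(W_0)$ (with $a=A_0$ when $Z_0\ne\varnothing$) such that $\chi=\theta\circ\alpha$ is pluriharmonic on $W_0$. I would set this $\chi$ aside to attack part i).

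For part i), suppose $\alpha$ had a local minimum at some $p\in W$ with value $c$. Since $p\notin Z_0$ we have $c>A_0=a$, so $c$ is interior to $(a,b)$ and $\theta$ is finite and real analytic near $c$; consequently $\tilde\chi:=\theta\circ\alpha$ is a \emph{continuous} function on a neighborhood of $p$ that is pluriharmonic off the negligible set $\Crt(\alpha)$ (of real dimension $\le2$). Slicing by complex lines — a generic line meets $\Crt(\alpha)$ in an isolated, hence polar, set — shows that both $\tilde\chi$ and $-\tilde\chi$ stay subharmonic across $\Crt(\alpha)$, so $\tilde\chi$ is pluriharmonic near $p$. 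But $\alpha=\lambda\circ\tilde\chi$ with $\lambda=\theta^{-1}$ increasing, so $\tilde\chi$ would have an interior local minimum at $p$; the minimum principle for pluriharmonic functions then forces $\tilde\chi$, hence $\alpha$, to be constant near $p$, contradicting that $\alpha$ is a nonconstant real analytic exhaustion. This proves i). (The absence of local maxima, needed below, is immediate from the maximum principle for the nonconstant real analytic psh exhaustion $\alpha$.)

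With no local extrema on $W$, hypothesis 4) of Lemma \ref{LIS} holds ($\alpha(W)=\alpha(W_0)$), and the lemma delivers the pluriharmonic $\chi=\theta\circ\alpha$ on all of $W=U\setminus Z_0$; putting $\lambda=\theta^{-1}$ gives $\alpha=\lambda\circ\chi$ with $\lambda$ increasing, and $\lambda$ is convex since $\p\bar\p\alpha=\lambda''(\chi)\,\p\chi\wedge\bar\p\chi\ge0$. Properness of $\chi$ then follows from properness of the exhaustion $\alpha$ together with the strict monotonicity of $\theta$, and for iii) I would examine $\theta$ at the left endpoint $a=A_0$: near the minimum locus $Z_0$ the coefficient $m$ blows up, forcing $\int\exp(-\int m)$ to diverge and $\theta(t)\to-\infty$ as $t\to A_0^+$, i.e. $\chi\to-\infty$ along $Z_0$. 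I expect the genuine obstacle to lie in part i), namely in justifying that the locally defined $\theta\circ\alpha$ extends pluriharmonically across the thin critical set so that the minimum principle applies — in effect, that every "$-\infty$ end" of $\chi$ occurs only at the absolute minimum level — with the sharp $-\infty$ asymptotics of $\theta$ at $A_0$ in part iii) a secondary, more computational, difficulty.
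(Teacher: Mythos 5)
Your overall skeleton for part ii) matches the paper's: verify the hypotheses of Lemma \ref{LIS} for $\beta=\alpha$ on $W=U\setminus Z_0$ via Lemma \ref{LAURA2} and part e) of Corollary \ref{FUN}, solve $\theta''+m\theta'=0$, and set $\lambda=\theta^{-1}$, with convexity read off from $\p\bar\p\alpha=(\lambda''\circ\chi)\,\p\chi\wedge\bar\p\chi\ge 0$. Your part i) also works, but it is roundabout: the paper secures hypothesis 4) of Lemma \ref{LIS} directly ($W_0$ is dense in $W$, so $\alpha(W)\subset\overline{\alpha(W_0)}$, and neither endpoint of $\alpha(W_0)$ can be attained in $W$ --- the lower one because its preimage is exactly $Z_0$, the upper because a point realizing it would be a local maximum, forcing $\alpha$ to be constant near it, against $\rdim\,\Crt(\alpha)\cap U\le 2$), and then part i) is immediate from ii): $\chi$ is pluriharmonic, hence has no local minima, and $\alpha=\lambda\circ\chi$ with $\lambda$ strictly increasing. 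Your removable-singularity detour is correct in substance, but the slicing claim you sketch is precisely Lemma \ref{Silla} of the Appendix (whose proof must deal separately with complex and totally real points of the exceptional set), so it should be cited rather than re-derived in one line.

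The genuine gap is part iii), which you set aside as ``secondary, more computational'' but which is in fact the one point of the lemma requiring an idea. Your argument is the bare assertion that near $Z_0$ the coefficient $m$ blows up, forcing $\theta(t)\to-\infty$ as $t\to A_0^+$. Nothing in the construction gives any a priori control of $m$ near the endpoint: $m$ is produced abstractly by Lemma \ref{LIS} on the open interval $(A_0,A^*)$, and non-integrability of $m$ at $A_0^+$ is \emph{equivalent} to the divergence $\theta(t)\to-\infty$ you want, so as written you are assuming the conclusion. A direct estimate of $m$ would require knowing how $\alpha-A_0$ vanishes along $Z_0$ (e.g.\ that $Z_0$ is a complex curve), but in the paper that structure of $Z_0$ is deduced \emph{from} iii) (via pluripolarity, in Theorem \ref{SCOTTISH}), not available as an input. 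The paper instead argues indirectly: if $c_0=\lim_{t\to A_0^+}\theta(t)>-\infty$, pick a regular point $p\in Z_0$ and a complex line $L$ through $p$ transversal to $T_p(Z_0)$, so that $p$ is isolated in $L\cap Z_0$; then $\chi|_L$ is a bounded harmonic function on a punctured disc, extends harmonically across $p$ with value $c_0$, and has a strict interior minimum there --- contradicting the minimum principle, so $Z_0$ must be empty. Note that your own device from part i) would close the gap in the same spirit: extend $\chi$ by the constant $c_0$ continuously across $Z_0$, invoke Lemma \ref{Silla}, and contradict the minimum principle on $U$. As the proposal stands, however, part iii) is unproved.
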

\begin{proof}
Let $W=U\setminus Z_0$ and $W_0=U\setminus {\rm Crt}(\alpha)$. Of course ${\rm dim}_\R\,Z_0\le 2$  and $Z_0$is real analytic. Thus, $U$ being connected and 
${\rm dim}_\R\,U=4$, $Z_0$ cannot separate $U$ and ${\rm Crt}(\alpha)$ cannot separate $W$,
so $W$ and and $W_0$ are connected; it follows that
\begin{equation}\label{Yorick}\alpha(W_0)=\alpha(W)=(A_0,A^\ast)\qquad\textrm{where }A^\ast\le +\infty\;.\end{equation}
(Indeed, the only possible values in $\alpha(W)\smallsetminus\alpha(W_0)$ could be one of the
local minimum values but since $\alpha(W_0)$ is a connected interval and 
$\inf\,\alpha(W_0)=\inf\,\alpha(W)=A_0$, we have equality.)

Let $Y$ be a connected component of $\{\alpha=c\}$ intersecting $\Sigma^1$. In view of Theorem \ref{LUCY} $Y$ is Levi flat so, by Lemma \ref{OLD}, it is the zero set of a pluriharmonic function near $Y$ so the fundamental Corollary \ref{FUN} applies. Then, saving the same notations $V\supset Y$, $\chi:V\to\R$, $\overline{N}\subset V$,  by part d)
$$
\p{\bar\p}\alpha\wedge \p{\bar\p}\alpha=0,\,\p\alpha\wedge{\bar\p}\alpha\wedge \p{\bar\p}\alpha=0
$$
and hence, by Lemma \ref{LAURA2} there is a real analytic function $\mu:W_0\to\R$ such
that
\begin{equation}\label{Polonius}\p{\bar\p}\alpha=\mu\p\alpha\wedge{\bar\p}\alpha\;.\end{equation}
By part e) of Corollary \ref{FUN}, $d\mu\wedge d\alpha=0$ in $N$. But then by real analyticity, 

\begin{equation}\label{Laertes}d\mu\wedge d\alpha=0\qquad \textrm{in }X\setminus{\rm Crt}(\alpha)\;.\end{equation}

In view of \eqref{Yorick}, \eqref{Polonius}, \eqref{Laertes}, function $\alpha$ and sets $W$, $W_0$ satsfy all the conditions of Lemma
\ref{LIS} with $\beta=\alpha$ and so there exist real analytic functions
$\theta:(A_0,A^\ast)\to\R$, such that $\theta'(t)>0$, for all $t$, $\chi=\theta\circ\alpha:W\to\R$
is pluriharmonic, i.e. $\chi=\theta\circ\alpha:W\to\R$.

Let now $\lambda=\theta^{-1}:\chi(W)\to\R$. Of course, $\lambda$ is real analytic, $\lambda'(t)>0$
and $\alpha=\lambda\circ\chi$ in $W$. Since
$\p{\bar\p}\alpha=(\lambda''\circ\chi_0)\p\chi\wedge{\bar\p}\chi$, and $\alpha$ is
plurisubharmonic, $\lambda''\ge 0$, i.e. $\lambda$ is convex.This shows ii).

Observe now that, since $\chi$ is pluriharmonic on $W$ (by definition), $\chi$ does not
have any local minimum points in $W$, and since $\alpha=\lambda\circ\chi$, $\lambda$ strictly
increasing, also $\alpha$ cannot have any local minimum points in $W$.

The set $\chi(W)$ is an open interval, say $\chi(W)=(c_0,c^\ast)$, with $c_0\ge
-\infty$, $c^\ast\le+\infty$. Since $\alpha(p)\to 0$ as $p\to Z_0$, and $\chi=\theta\circ\alpha$,
we obtain
\begin{equation}\label{Horatio}\>\lim_{p\to Z_0}\chi(p)=c_0\;.\end{equation}
\noindent We prove now that $c_0=-\infty$. Suppose , to the contrary, that $c_0>-\infty$; we
will show that $Z_0$ is empty. Consider any regular point $p\in Z_0$ and its
neighbourhood $V$ with complex coordinates $z,w$. Without loss of generality, $V\subset \C^2$. Then $p$ is either an isolated point, or $T_p(Z_0)$, the tangent space, is a line
or a plane in $\C^2$. Whatever of these three cases may be, there is a complex
line $L$ through $p$ transversal to $T_p(Z_0)$, hence $p$ is an isolated point
of $L\cap Z_0\cap V$. The pluriharmonic function $\chi$ restricted to 
$L\cap Z_0\cap V$ is a bounded harmonic function with as its isolated singular
point and so extends to a harmonic function 
$\tilde\chi:L\cap\{\vert (z,w)-p\vert^2<\delta\}\to\R$,which must have value $c_0$
at $p$, by \eqref{Horatio}, and so a strict minimum there (as $p$ is an isolated point of
$Z_0=\{\alpha=\min\alpha\}$). This is a contradiction. Thus $c_0=-\infty$, which proves
iii).
\end{proof}

\begin{proof}[Proof of Theorem \ref{SCOTTISH}]
The existence of $\chi$ follows from Lemma \ref{MAR} applied to $U=X$. The absolute minimum set $Z$ of $\alpha$ is a compact complex pluripolar set in $X$ defined by the plurisubharmonic function
$$
\psi(p)=\begin{cases}
\chi(p)\>\>&{\rm if} \>\>x\in X\smallsetminus Z\\
-\infty\>\>&{\rm if} \>\>p\in Z.
\end{cases} 
$$
(i.e. $Z=\{\psi=-\infty\}$). Since, in addition, $Z$ is compact and real analytic from Lemma \ref{ATT} (see Appendix), it follows that $Z$ is the union of finitely many compact complex curves. Theorem \ref{SCOTTISH} is completely proved.
\end{proof}

\section{Existence of proper pluriharmonic functions - II}\label{mainp}\noindent
In this section, we prove the last part of the Structure Theorem, namely the following

\begin{teorema}\label{maint}
Let $X$ be a Grauert type surface and $\alpha: X\to\R$ a real analytic plurisubharmonic exhaustion. Assume that $\rdim {\rm Crt}(\alpha)=3$.Then either there is a proper pluriharmonic function $\chi:X\to\R$, or there is a double holomorphic covering map $\pi:X^\ast\to X$ and a proper pluriharmonic function $\chi^\ast:X^\ast\to\R$.
\end{teorema}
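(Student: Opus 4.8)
The plan is to analyze the case $\rdim \Crt(\alpha)=3$ by studying the local structure of $\alpha$ near its critical set and then deciding whether a single-valued pluriharmonic function can be globally defined, or whether it becomes single-valued only after passing to a double cover. The starting point is that $X$ is of Grauert type, so $\Sigma^1=X$ and the two defining equations
\begin{equation*}
\p\alpha\wedge\bar\p\alpha\wedge\p\bar\p\alpha=0,\qquad \p\bar\p\alpha\wedge\p\bar\p\alpha=0
\end{equation*}
hold on all of $X$. By Lemma \ref{LAURA2} there is a real analytic function $\mu:X\smallsetminus\Crt(\alpha)\to\R$ with $\p\bar\p\alpha=\mu\,\p\alpha\wedge\bar\p\alpha$ off the critical set, and by Corollary \ref{FUN}, part e), together with real analyticity, $d\mu\wedge d\alpha=0$ on $X\smallsetminus\Crt(\alpha)$. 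The essential new feature is that now $\Crt(\alpha)$ is a real hypersurface: I expect it to be (the regular part of) a Levi-flat level set on which $\alpha$ attains a local but non-absolute extremum, as signalled by Example 2, where the $3$-dimensional minimum set is exactly such a Levi-flat hypersurface $H_0$.

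First I would identify the geometry of the set $M=\Crt(\alpha)$ of dimension $3$. Since $\alpha$ is plurisubharmonic and constant along the dense Levi leaves of each regular level, the condition $\rdim M=3$ forces $\alpha$ to be locally constant in the directions transverse to the foliation on $M$ as well; concretely I would show that $M$ is a Levi-flat real analytic hypersurface along which $\alpha$ has a critical (extremal) value, so that $X\smallsetminus M$ splits locally into two sides on each of which Lemma \ref{MAR} or the machinery of Lemma \ref{LIS} produces a pluriharmonic $\chi=\theta\circ\alpha$ with $\theta'>0$. The key point is that on each side the ODE \eqref{Claudius}, $\theta''+m\,\theta'=0$, determines $\theta$ up to an affine change $\theta\mapsto c\theta+c_1$, so the pluriharmonic primitive is essentially unique on each side and we may normalize it.

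Next I would glue. Crossing the hypersurface $M$ where $\alpha$ has a fold-type extremum, the natural pluriharmonic function behaves like $\chi$ versus $-\chi$ on the two sides, so the locally defined pluriharmonic functions patch into a single-valued global $\chi:X\to\R$ precisely when the two determinations agree in orientation along $M$; otherwise they differ by sign and one obtains a $\Z/2$ ambiguity. This is exactly the mechanism behind case iii-b): the $\Z/2$-monodromy of the sign of the pluriharmonic primitive around $M$ defines a double holomorphic cover $\pi:X^\ast\to X$ on which the ambiguity is resolved. I would construct $X^\ast$ as the cover associated to the kernel of the homomorphism $\pi_1(X)\to\Z/2$ recording this sign change, lift $\alpha$ to $\alpha^\ast=\alpha\circ\pi$, and obtain a well-defined single-valued pluriharmonic $\chi^\ast:X^\ast\to\R$ with $\alpha^\ast=\lambda\circ\chi^\ast$, $\lambda$ increasing; properness of $\chi^\ast$ follows from properness of $\alpha^\ast$ together with $\theta'>0$, since preimages of compact intervals under $\chi^\ast$ are preimages of compact intervals under $\alpha^\ast$, hence compact.

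The hard part will be the gluing step: verifying that the local pluriharmonic determinations produced by Lemma \ref{LIS} on the two sides of the critical hypersurface $M$ match up to at most a sign, and that the resulting obstruction is genuinely an element of $H^1(X;\Z/2)$ classifying a connected double cover. This requires a careful local normal form for $\alpha$ near a regular point of $M$ (showing $\alpha-\alpha(M)$ vanishes to even order transversally, so $\sqrt{\alpha-\alpha(M)}$ is the relevant branched quantity whose two branches give the $\pm\chi$), and then a monodromy argument around $M$ to detect when the global primitive exists on $X$ itself versus only on the double cover. Establishing that $M$ is exactly a single Levi-flat hypersurface (rather than a more complicated $3$-dimensional real analytic set) and controlling the behaviour of $\mu$ and $\theta$ as $\alpha$ approaches the critical value on $M$ is where the real analyticity hypotheses and the local maximum property from Theorem \ref{lmp} will do the decisive work.
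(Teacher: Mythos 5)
Your global mechanism is the right one and coincides with the paper's: near a $3$-dimensional critical component, $\alpha$ vanishes to even order $2k$ transversally (Propositions \ref{GLO2} and \ref{GLO3}), the two branches $\beta=\pm\alpha^{1/2k}$ composed with the solution $\theta$ of the ODE \eqref{Claudius} give a pluriharmonic primitive defined up to sign, and the resulting $\Z/2$ ambiguity is resolved on a double cover; the paper realizes this cover as the \'etal\'e space of the two-valued subsheaf $\Chi$ of pluriharmonic germs, which, when it exists, is the same object as your cover classified by a homomorphism $\pi_1(X)\to\Z/2$. (Two small corrections: the relevant root is the $2k$-th root, not the square root, and the local primitive is $\theta\circ\beta$, not $\theta\circ\alpha$, since $\theta\circ\alpha$ does not extend across the critical set.)

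The genuine gap is in what you yourself flag as the hard part, and your proposed route through it would fail. You plan to show that the $3$-dimensional critical set is a single smooth Levi-flat hypersurface; the paper neither proves nor uses this. Instead it works with the possibly singular real analytic set $\widetilde M=\bigcup_i\overline M^i$, decomposes $X$ into finitely many cells and faces, proves that every face lies in the absolute minimum level of $\alpha$ (Theorem \ref{Emilia}) with the same order of flatness along each face (Proposition \ref{Titania}), and — this is the decisive point your monodromy argument ``around $M$'' does not see — handles the singular set $S=\bigcup_i(\overline M^i\setminus M^i)$, which has real codimension $2$. Around a $2$-dimensional singular stratum of $\widetilde M$, some number $m$ of faces meet; the local sign choices alternate from sector to sector, so a consistent pair of germs exists there if and only if $m$ is even. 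If $m$ were odd, the sign data would define only a double cover of $X\setminus S$ with nontrivial monodromy around $S$ (i.e., a cover branched over $S$); it would not descend to a homomorphism $\pi_1(X)\to\Z/2$, no unbranched double cover of $X$ would carry a single-valued primitive, and your gluing step could not be completed. The paper excludes this using Whitney stratification, the Thom--Mather tubular neighbourhood theorem, and, crucially, Sullivan's theorem that a real analytic space is locally a cone over a polyhedron of even Euler characteristic, which forces $m$ to be even (Proposition \ref{Ero}); the remaining $1$- and $0$-dimensional strata are then handled by simple connectivity of the local complements (Proposition \ref{Orsola}). Some substitute for this parity input is indispensable, and it is absent from your proposal.
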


In order to present the proof of this result, we need a careful analysis of the critical level sets of $\alpha$, to which is devoted the reminder of this section. All the results before the proof of Theorem \ref{maint} assume implicitly its hypotheses.

\subsection{Geometrical structure of the set of critical points of $\alpha$}\label{gestr}

The critical set $\Crt(\alpha)$ is a real-analytic subset of $X$, and so, its regular part is dense in it (\cite{N}). The regular part is the union of (at most) countably many pairwise disjoint locally closed real-analytic submanifols of $X$.

Denote by $\{M^i\}_{i\in I}$ the collection of $3$-dimensional components of the regular part of $\Crt(\alpha)$ and, for each $i\in I$, let $A^i\in\R$ be the value of $\alpha$ on $M^i$; we set 
$$\widetilde{M}=\bigcup_{i\in I} \overline{M}^i\;.$$ 

Since we assume that $\rdim {\rm Crt}(\alpha)=3$, $\widetilde{M}\neq\emptyset$.

\begin{propos}\label{Desdemona}Let $U$ be a connected component of $X\setminus\widetilde{M}$, then
\begin{enumerate}
\item[1)] $U$ is open
\item[2)] if $M^i\cap {\rm b}U\neq\emptyset$, then $M^i\subseteqq {\rm b}U$
\item[3)] $\alpha\vert_{{\rm b}U}$ is constant and equal to $\min_{\overline{U}}\alpha$.
\end{enumerate}
\end{propos}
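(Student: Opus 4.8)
The plan is to understand the set $\widetilde M$ as a ``barrier'' separating $X$ into open chambers and to extract the three asserted properties from the interplay between the local structure of $\Crt(\alpha)$ and the fact that $\Sigma^1 = X$ forces every regular level to be Levi flat. First I would establish part 1). The set $\widetilde M = \bigcup_{i\in I}\overline{M}^i$ is a closed subset of $X$ (as a locally finite union of closures of the $3$-dimensional strata — local finiteness coming from the fact that the regular part of a real-analytic set decomposes into locally finitely many strata), hence its complement is open, and any connected component $U$ of an open subset of a locally connected space is itself open. This is the routine step.

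The substance lies in parts 2) and 3), and I expect 2) to be the main obstacle. The natural approach to 2) is to argue that $\alpha$ is constant on each $M^i$ (it equals $A^i$ by definition) and that crossing an $M^i$ one cannot remain in $U$: since $M^i$ is a $3$-dimensional real-analytic submanifold on which $\alpha$ is constant, $M^i$ is locally a smooth level hypersurface $\{\alpha = A^i\}$, and because $\Sigma^1 = X$ every regular level is Levi flat, so near a point of $M^i$ the function $\alpha$ behaves like a function of one real variable transverse to the level. Thus a small transversal segment meets $\{\alpha = A^i\}$ and one side lies in $U$; the point to pin down is that the whole component $M^i$ (and not just the chunk touching $U$) lies in $\mathrm{b}U$. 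Here I would use a connectedness/propagation argument on $M^i$: the subset of $M^i$ lying in $\overline U$ is closed in $M^i$, and I would show it is also open by the local transversal picture (every point of $M^i\cap \overline U$ has a neighborhood in $M^i$ still meeting $\overline U$, because $\alpha$ restricted to a transversal has a one-sided structure that is stable along $M^i$); since $M^i$ is connected, $M^i\cap{\rm b}U\neq\emptyset$ then forces $M^i\subseteq\overline U$, and as $M^i\cap U=\emptyset$ (because $M^i\subset\widetilde M$) we get $M^i\subseteq{\rm b}U$.

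For part 3), I would combine 2) with the fact that $\alpha$ attains no local minimum in $X\setminus Z$ when $\rdim\Crt(\alpha)=3$ — or rather, argue directly. The boundary ${\rm b}U$ is contained in $\widetilde M$, so it is a union of pieces of the $\overline M^i$; by 2) each $M^i$ meeting ${\rm b}U$ sits entirely in ${\rm b}U$, and $\alpha\equiv A^i$ there. To see that a single value is attained on all of ${\rm b}U$, I would use that $U$ is connected and that $\alpha_{|\overline U}$ is continuous with $\alpha$ having degenerate Levi form everywhere: the minimum of $\alpha$ over $\overline U$ is attained on the boundary (an interior minimum would be a local minimum of $\alpha$, excluded on level sets foliated by dense leaves by the maximum-principle argument of Corollary \ref{Lucy2}), and along ${\rm b}U$ the value is locally constant (equal to $A^i$ on each boundary stratum), hence globally constant by connectedness of ${\rm b}U$ inherited from that of $U$. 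Identifying this constant with $\min_{\overline U}\alpha$ then gives 3). The delicate point throughout is controlling how the closures $\overline M^i$ can meet lower-dimensional strata of $\Crt(\alpha)$, and I would lean on real-analyticity (stratification, Remmert--Stein-type propagation) to ensure the transversal picture persists up to the boundary.
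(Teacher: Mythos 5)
Your parts 1) and 2) are essentially the paper's own argument: closedness of $\widetilde{M}$ via local finiteness (the paper gets this from properness of $\alpha$, which makes the $\overline{M}^i$ a locally finite family of compact sets), and for 2) the same two-sided local picture, which the paper packages as a neighbourhood $V^i$ with $V^i\cap\widetilde{M}=M^i$ and $V^i\setminus M^i$ having at most two components, so that any component meeting $U$ lies entirely in $U$ and its closure contains all of $M^i$. Part 3), however, is where the real content of the proposition lies, and there your argument has two genuine gaps.

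First, your exclusion of an interior minimum of $\alpha$ on $U$ by ``the maximum-principle argument of Corollary \ref{Lucy2}'' does not work: that corollary concerns \emph{regular} level sets foliated by dense leaves, whereas a local minimum point is a critical point, so its level is not covered. Indeed, local minima do occur on Grauert type surfaces (in case iii-a the absolute minimum set is a compact complex curve), and at this stage nothing prevents a cell from containing such a curve; the paper does not exclude this but instead works on $U\setminus\Min(\alpha)$. Second, and fatally, the claim that ${\rm b}U$ is connected ``inherited from that of $U$'' is false: the boundary of a connected open set need not be connected (think of a cell pinched between two disjoint Levi-flat critical hypersurfaces, e.g. $\{1<\phi<2\}$ in the Hopf surface example, whose boundary has two components). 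This is not a repairable technicality, because the entire difficulty of part 3) is precisely to show that \emph{distinct} boundary components (faces) of one cell carry the \emph{same} value of $\alpha$; local constancy plus connectedness can never deliver that. The paper's mechanism is unavoidably analytic: by Lemma \ref{MAR} there is a pluriharmonic $\chi=\theta\circ\alpha$ on $U\setminus\Min(\alpha)$; if some face $M^i$ had $A^i>\min_{\overline{U}}\alpha$, then (after a Hopf-lemma argument showing $\alpha>A^i$ on the side $V^i_+\subset U$) $\chi$ extends pluriharmonically across the real analytic Levi-flat hypersurface $M^i$ by the reflection principle (Lemma \ref{Mario}), whose part b) gives ${\rm d}\widetilde{\chi}\neq 0$ on $M^i$; since $\alpha=\theta^{-1}\circ\widetilde{\chi}$ near $M^i$ with $\theta^{-1}$ increasing, this forces ${\rm d}\alpha\neq 0$ on $M^i$, contradicting $M^i\subset\Crt(\alpha)$. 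Hence every face satisfies $A^i=\min_{\overline{U}}\alpha$. Your proposal contains no substitute for this step, so the proof as written does not go through.
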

\begin{proof} We note that, $\alpha$ being proper, $\{\overline{M}^i\}_{i\in I}$ is a locally finite family of compact sets, hence their union $\widetilde{M}$ is a closed subset of $X$. Thus 1) follows.

To prove 2), we remark that each $M^i$ is relatively open in $\Crt(\alpha)$, so $M^i\cap \overline{M}^j=\emptyset$ whenever $i\neq j$; moreover, for each $i\in I$, there exists an open set $V^i\subseteqq X$ such that $\widetilde{M}\cap V^i=M^i$ and $V^i\setminus M^i$ has \emph{at most} $2$ connected components. 
Up to shrinking the open sets $V^i$, we can assume that $V^i\cap V^j=\emptyset$ if $i\neq j$; therefore
$$V^i\cap\widetilde{M}=M^i\quad\textrm{and}\quad V^i\setminus M^i\subseteqq X\setminus \widetilde{M}\;.$$
If, for some $i$, $M^i\cap {\rm b}U\neq \emptyset$, then $V^i\cap U\neq\emptyset$; therefore there exists a connected component $V^i_+$ of $V^i\setminus M^i$ which is contained, by connectedness, in $U$. Since the boundary of $V^i_+$ contains $M^i$, $M^i\subseteqq \overline{U}$.

We now show 3). Let
$$A=\min_{\overline{U}}\alpha\;.$$
Now, suppose that there exists a face $M^i$ of $U$ such that $A^i>A$. By Theorem \ref{MAR}, there exsits a real-analytic concave function $\theta:(A, A^*)\to\R$, with $A^*=\sup_U\alpha\in (A, +\infty]$, such that $\chi:=\theta\circ(\alpha\vert_{U\setminus\Min(\alpha)})$ is a pluriharmonic function in $U\setminus\Min(\alpha)$.

By the previous part of this proof, at least one component  $V^i_+$ of $V^i\setminus M^i$ intersects $U$; let $H$ be a connected component of $V^i_+\setminus\{\alpha=A^i\}$. If $\alpha\vert_H<A^i$, by Hopf Lemma \cite[Lemma 3.8]{ST}, $bH\cap \widetilde{M}=\emptyset$, but this cannot happen for all the connected components of $V^i_+\setminus\{\alpha=A^i\}$, as $bV^i_+\cap M^i\neq\emptyset$. Therefore we have that $\alpha\vert_H> A^i$ for some $H$, hence $A^*>A^i$.

Let $c=\theta(A^i)$, then $\chi\vert_{V^i_+}>c$ and $\chi\vert_{M^i}=c$. By Lemma \ref{Mario}, there exist a domain $\widetilde{V}\supseteq V^i_+\cup M^i$ and a pluriharmonic function $\widetilde{\chi}:\widetilde{V}\to\R$ extending $\chi$. We can assume, without loss of generality, that $\widetilde{V}$ is a connected open subset of $\{x\in  X \: \alpha(x)>A\}$ 

Consider now the function $\widetilde{\alpha}=\theta^{-1}\circ \widetilde{\chi}:\widetilde{V}\to\R$; such a function is real-analytic and coincides with $\alpha$ on the non-empty open set $V^i_+$, therefore they are equal on $\widetilde{V}$, by connectedness.

Since $\chi\vert_{V^i_+}>c$, $d\widetilde{\chi}(x)\neq 0$ for $x\in M^i$, but then 
$$d\alpha(x)=(\theta^{-1})'(x)d\widetilde{\chi}(x)\neq 0\;,$$ 
as $\theta^{-1}$ is increasing. 

This contraddicts the fact that $M^i\subseteqq \Crt(\alpha)$. Therefore, $A^i=A$.

\end{proof}

We call any connected component $U$ of $X\setminus\widetilde{M}$ a {\it cell}, and any $M^i$ a {\it face}. If $M^i\subset \oli U$, we call $M^i$ a {\it face} of $U$. To every face $M^i$ we associate an open neighbourhood $V^i$, as described in the proof of Proposition \ref{Desdemona}.

We note that, for every cell $U$, $\bar{U}\cap \widetilde{M}\neq\emptyset$, otherwise $U=\overline{U}$, i.e. $U=X$ by connectedness.

If $M^i$ is a face of $U$ and $V^i\setminus M^i\subseteqq U$, we call $M^i$ an \emph{internal face} of $U$ and set 
$$W^i=U\cup M^i\;.$$ 
Such $W^i$ is again an open set, as $W^i=U\cup V^i$, and it is uniquely determined by $M^i$, as $V^i\setminus M^i$ can be contained in a unique cell.

On the other hand, if $V^i\setminus M^i= V^i_+\cup V^i_-$ and $V^i_+\subseteqq U$, $V^i_-\subseteqq U'$ with $U\neq U'$, we say that $M^i$ is a \emph{connecting face} between $U$ and $U'$ and we set
$$W^i=U\cup M^i\cup U'\;.$$ 
Again, $W^i=U\cup V^i\cup U'$, so it is an open set and it is uniquely determined by $M^i$.

\begin{teorema}\label{Emilia}$\widetilde{M}=\{x\in X\ :\ \alpha(x)=\min_{X}\alpha\}$
\end{teorema}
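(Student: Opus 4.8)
The plan is to prove the two inclusions $\widetilde{M}\subseteq\{\alpha=\min_X\alpha\}$ and $\{\alpha=\min_X\alpha\}\subseteq\widetilde{M}$ separately. Write $m=\min_X\alpha$, which is attained because $\alpha$ is an exhaustion. The decisive structural input is that every cell $U$ (a connected component of $X\setminus\widetilde{M}$) satisfies $\rdim\Crt(\alpha)\cap U\le 2$, since all $3$-dimensional regular components of $\Crt(\alpha)$ lie in $\widetilde{M}$; hence Lemma \ref{MAR} applies verbatim to each $U$, furnishing a proper pluriharmonic $\chi_U$ on $U\setminus Z_0^U$ with $\alpha=\lambda_U\circ\chi_U$, where $Z_0^U$ denotes the interior minimum locus of $\alpha|_U$.

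For the inclusion $\widetilde{M}\subseteq\{\alpha=m\}$ I would argue by propagation of the minimum value across faces. By Proposition \ref{Desdemona}(3), $\alpha$ is constant on $\mathrm{b}U$ and equal to $\min_{\oli U}\alpha$; in particular all faces of a given cell share the common value $\min_{\oli U}\alpha$, and a connecting face $M^i$ between $U$ and $U'$ forces $\min_{\oli U}\alpha=A^i=\min_{\oli{U'}}\alpha$. Grouping the cells into connecting-face adjacency classes, the union of the cells of one class together with their internal and connecting faces is both open and closed in $X$; since $X$ is connected there is a single class, so $\min_{\oli U}\alpha$ takes the same value for every cell. As $m$ is attained at a point lying either in some cell or on some face, this common value equals $m$, and therefore every face sits at level $m$, which gives $\widetilde{M}\subseteq\{\alpha=m\}$.

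For the reverse inclusion it suffices to show that $Z_0^U=\varnothing$ for every cell $U$, i.e.\ that $\alpha$ has no interior minimum in any cell. Suppose $Z_0:=Z_0^U\neq\varnothing$; then $\min_{\oli U}\alpha=m$ by the previous paragraph, and Lemma \ref{MAR}(iii) gives $\chi_U\to-\infty$ as $p\to Z_0$, while $\alpha|_{\mathrm{b}U}=m=\lambda_U(-\infty)$ forces $\chi_U\to-\infty$ as $p\to\mathrm{b}U$ as well. Writing $\chi_U=\Re F$ for a local holomorphic $F$, the function $e^{F}$ is locally bounded and tends to $0$ along $Z_0$ (indeed $|e^{F}|=e^{\chi_U}\to 0$); since $\rdim Z_0\le 2$, that is, $Z_0$ has real codimension $\ge 2$, $e^{F}$ extends holomorphically across $Z_0$ with $Z_0$ contained in its zero set. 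Consequently $Z_0$ is a complex curve (an isolated point being excluded, as zero sets in a surface are never isolated), and its closure $\oli{Z_0}\subseteq\{\alpha=m\}$ is compact.

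The main obstacle is to turn this into a genuine contradiction, and here the complex-analytic route seems unavoidable: the cells of a Grauert type surface are typically unbounded (as in the Hopf example of Section \ref{exm}, where $\chi_U=\pm\log\phi$ has supremum $+\infty$), so the naive maximum-principle argument on the sets $\{\chi_U\ge c\}$ fails because the outer part of the boundary carries finite values of $\chi_U$. Instead I would prove that $\oli{Z_0}$ is a \emph{compact complex curve}; the delicate point is the behaviour of the complex curve $Z_0$ along $\mathrm{b}U\subseteq\widetilde{M}$, which I would control by extending $e^{F}$ across the faces as in the proof of Proposition \ref{Desdemona} and invoking a Remmert--Stein argument so that no boundary component is lost. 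Once $\oli{Z_0}$ is a compact complex curve, Theorem \ref{BIR} (applied with $\chi=\Re F$ near the regular level $\{e^{F}=0\}$) would make $X$ proper over a complex curve, contradicting the standing hypothesis that $X$ is of Grauert type, i.e.\ in case 3) of Theorem \ref{MAR1}, in which no compact complex curve occurs; alternatively, the compactness of $Z_0$ together with Lemma \ref{ATT} identifies it with finitely many compact complex curves and yields the same contradiction. Hence $Z_0^U=\varnothing$ for every cell, completing the proof.
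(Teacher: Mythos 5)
Your first inclusion ($\widetilde{M}\subseteq\{\alpha=\min_X\alpha\}$) is essentially the paper's own argument (the common boundary value of Proposition \ref{Desdemona}(3) propagated through adjacency, then connectedness), but with one inaccuracy: the union of the cells of one adjacency class with their internal and connecting faces is \emph{not} closed in $X$ --- its closure meets the singular set $S=\bigcup_i(\oli{M}^i\setminus M^i)$, which lies in no cell and no face. You must work in $X\setminus S$ and use $\rdim S\le 2$ to know that $X\setminus S$ is still connected; this is exactly why the paper introduces the open connected set $W=X\setminus\bigcup_i(\oli{M}^i\setminus M^i)$. That part is fixable.

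The second inclusion is where the proposal genuinely breaks down, in two places. First, $e^{F}$ does not exist: $\chi_U$ admits a single-valued pluriharmonic conjugate only on simply connected sets, and a punctured neighbourhood of a $2$-dimensional piece of $Z_0$ is not simply connected, so the local conjugates $F$ have periods and their exponentials need not patch (they do only if the periods of ${\rm d^c}\chi_U$ around $Z_0$ lie in $2\pi\Z$). This is not a removable technicality: in Example 2.1 of the paper, near the minimal curve $C_1$ one has $\chi=\log\phi$, and ``$e^{F}$'' would be $z_2^{2\tau}/z_1^{2}$ with $\tau$ irrational --- genuinely multivalued; the incommensurability of these periods is precisely what makes the leaves dense. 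Second, and more fundamentally, even granting that $\oli{Z_0}$ is a compact complex curve, no contradiction follows: case 3) of Theorem \ref{MAR1} does \emph{not} forbid compact complex curves --- it only concerns the \emph{regular} levels of $\alpha$, and compact curves can sit inside critical levels. Indeed case iii-a of the Structure Theorem (and again Example 2.1) exhibits Grauert type surfaces whose minimum set \emph{is} a compact curve. Theorem \ref{BIR} cannot be invoked either, since it requires the compact curve to lie in a \emph{regular} level of a pluriharmonic function, whereas $\oli{Z_0}$ is the polar set $\{\chi_U=-\infty\}$; Lemma \ref{ATT} likewise produces curves but no contradiction.

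The idea you are missing uses the faces, and you already derived the fact that makes it work: $\chi_U\to-\infty$ not only at $Z_0$ but also at ${\rm b}U$. Take a point $q$ in a face $M$ of $U$ and a connected neighbourhood $H_q$ such that $H_q\setminus M$ has two components, one of them, $H_q^+$, contained in $U$; set $\chi_1=\chi_U$ on $H_q^+$ and $\chi_1=-\infty$ on $H_q\setminus H_q^+$. The boundary behaviour makes $\chi_1$ upper semicontinuous, hence plurisubharmonic on $H_q$; but a plurisubharmonic function on a connected open set cannot equal $-\infty$ on a nonempty open subset while being finite elsewhere. This contradiction (which is the paper's proof) shows $\{\alpha=\min_X\alpha\}\cap U=\varnothing$ for every cell, i.e. the second inclusion, with no complex-analytic machinery at all.
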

\begin{proof}  
By part 3) of Proposition \ref{Desdemona}, $\alpha$ is constant on $bU$ for every cell $U$.

Let $\bar{A}$ be the smallest element of the set $\{A^i\}_{i\in I}$. By properness, there is a finite number of faces contained in $\{\alpha=\bar{A}\}$; let  us change the indexing so that these faces are $M^1,\ldots, M^N$.

We set 
$$W=\bigcup_{i\in I} W^i=X\setminus\bigcup_{i\in I}\left(\overline{M}^i\setminus M^i\right)$$
and we remark that, being the family of compact sets $\{\overline{M}^i\setminus M^i\}_{i\in I}$ locally finite, $W$ is open in $X$ and, as $\dim (\overline{M}^i\setminus M^i)\leq 2$ (as a semi-analytic set), $W$ is also connected.

We can also write
$$W=\bigcup_{i\in I} M^i\cup (X\setminus \widetilde{M})\;.$$
Now, define $H\subseteqq W$ as
$$H:=W^1\cup\ldots\cup W^N\;.$$
It is clear that $H$ is open in $W$; moreover,
 $$H=(M^1\cup\ldots\cup M^N)\cup H_0$$
 where $H_0$ is a union of finitely many cells $U$, all such that $\alpha\vert_{bU}= \bar{A}$.

Obviously, the relative boundary of such a cell $U$ in $W$ is contained in $M^1\cup\ldots \cup M^N$ and the closure of $H_0$ in $W$ is $H$. Therefore, $H$ is both closed and open in $W$ and, since $W$ is connected, $H=W$. Therefore, $\alpha\vert_{M^i}=\bar{A}$ for every $i\in I$; by continuity we have that $\alpha$ is constant on $\widetilde{M}$.

Now, let $U$ be a cell. Suppose $\{\alpha=\overline{A}\}$ intersects $U$, and let $Y$ be a connected component of $\{\alpha=\overline{A}\}$ such that $Y\cap U\neq\varnothing$. Since $Y\subset \Crt(\alpha)$ and $\dim_\R(Y\setminus\bigcup_iM^i)\leq 2$, we have $\dim_\R Y\cap U\leq 2$. Applying Lemma \ref{MAR} to the domain $U$, we obtain the existence of a pluriharmonic function $\chi:U\setminus Y\to \R$, of the form $\chi=\lambda\circ(\alpha\vert_{U\setminus Y})$, with $\lambda$ an increasing convex function, such that
$$\lim_{x\to U\cap Y}\chi(x)=-\infty\;.$$
In particular
\begin{equation}\label{Schiller}
\lim_{t\to\overline{A}^+}\lambda(t)=-\infty\;.\end{equation}
Let $M$ be a face adiacent to $U$ and $q\in M$ a point. Choose a connected open neighborhood $H$ of $q$ such that $H\setminus M$ has two connected components, at least one of them, denoted by $H^+$, contained in $U$. Define now function $\chi_1:H\to[-\infty,+\infty)$ by
$$
\chi_1(x)=\left\{\begin{array}{ll}\chi(x)&x\in H^+\\-\infty&x\in H\setminus H^+\;.\end{array}\right.$$
Then
$$\lim_{H\ni x\to M\cap H}\chi_1(x)=\lim_{H\ni x\to M\cap H}\lambda\circ\alpha(x)=\lim_{t\to\overline{A}^+}\lambda(t)=-\infty\;,$$
by \eqref{Schiller}. Thus $\chi_1$ is plurisubharmonic in $H$ and equal to $-\infty$ in $H\setminus H^+$, a set with non-empty interior, which is a contradiction. 

Therefore $\{\alpha=\overline{A}\}\cap U=\varnothing$. \end{proof}

We immediately obtain the following

\begin{corol} \label{Bianca}There are only finitely many faces and finitely many cells.\end{corol}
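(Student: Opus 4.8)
The plan is to deduce both finiteness statements from Theorem~\ref{Emilia}, which identifies $\widetilde{M}$ with the compact minimum level set $\{\alpha=\min_X\alpha\}$. Write $\bar A=\min_X\alpha$.

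First I would settle the faces. By Theorem~\ref{Emilia} every face satisfies $M^i\subseteq\overline{M}^i\subseteq\widetilde{M}=\{\alpha=\bar A\}$, and since $\alpha$ is an exhaustion this level set is compact. In the proof of Proposition~\ref{Desdemona} it was already observed that $\{\overline{M}^i\}_{i\in I}$ is a locally finite family of compact sets. A locally finite family all of whose members are contained in one fixed compact set is necessarily finite: cover the compact set by finitely many open sets, each meeting only finitely many members of the family. Hence $I$ is finite, and we may write $\{M^i\}_{i\in I}=\{M^1,\dots,M^N\}$.

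For the cells I would argue by an adjacency count. For every cell $U$ the relative boundary ${\rm b}U$ is nonempty and contained in $\widetilde{M}$. The key point is that ${\rm b}U$ must contain at least one full face: the set $\widetilde{M}\setminus\bigcup_i M^i=\bigcup_i(\overline{M}^i\setminus M^i)$ has real dimension $\le 2$, and a closed subset of real codimension $\ge 2$ in the connected $4$-manifold $X$ has connected complement; were ${\rm b}U$ contained in it, $U$ would be simultaneously open and relatively closed in the connected set $X\setminus{\rm b}U$, forcing $U=X\setminus{\rm b}U\supseteq\widetilde{M}$, which contradicts $U\cap\widetilde{M}=\varnothing$. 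Thus ${\rm b}U\cap M^i\neq\varnothing$ for some $i$, and then $M^i\subseteq{\rm b}U$ by part~2) of Proposition~\ref{Desdemona}. Conversely, each face $M^i$ is adjacent to at most two cells, since $V^i\setminus M^i$ has at most two connected components, each contained in a single cell. Counting the incidences $\{(U,M^i): M^i\subseteq{\rm b}U\}$ in two ways—at least one per cell, at most two per face—yields that the number of cells is $\le 2N<\infty$.

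I would note that, alternatively, the finiteness of cells can be read off directly from the proof of Theorem~\ref{Emilia}, which perhaps is cleaner: there one shows that $H=W^1\cup\dots\cup W^N$ coincides with the connected open set $W=X\setminus\bigcup_i(\overline{M}^i\setminus M^i)$; since $H$ meets $X\setminus\widetilde{M}$ exactly in the finite union of cells $H_0$, while $W\supseteq X\setminus\widetilde{M}$, it follows that $H_0=X\setminus\widetilde{M}$, so there are only finitely many cells. The only genuine obstacle is the adjacency claim that each cell's boundary contains an entire face; once the topological input (connectedness of the complement of a closed set of codimension $\ge 2$) is available, the remainder is pure bookkeeping.
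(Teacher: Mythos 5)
Your proof is correct and is essentially the paper's own argument: the paper handles the faces in one line (they are the finitely many $3$-dimensional regular components of the compact level set $\alpha^{-1}(\bar{A})$, by properness and Theorem \ref{Emilia}) and the cells by exactly the at-most-two-cells-per-face count you use. The only difference is that you explicitly justify the step the paper leaves implicit --- that the boundary of every cell must contain an entire face, which you obtain from the fact that the closed, at most $2$-dimensional set $\bigcup_i\left(\overline{M}^i\setminus M^i\right)$ cannot separate the connected $4$-manifold $X$ --- so your write-up is a slightly more careful rendering of the same proof.
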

\begin{proof} The faces are the $3$-dimensional components of the regular part of $\alpha^{-1}(\bar{A})$, which are finitely many, as $\alpha$ is proper. Moreover, every face may belong to at most two cells, so also the cells are finitely many.\end{proof}

\subsection{Construction of pluriharmonic functions on $W$}\label{crphf}

Without loss of generality, we suppose that the value of $\alpha$ on $\widetilde{M}$ is $0$. Let $U$ be a cell. Due to the standing assumptions, $\p\alpha\wedge \bar\p\alpha\wedge\p\bar\p\alpha=0$ in $U$, and so, by Lemma \ref{LAURA2} there is a real analytic function $\mu_1:U\setminus\Crt(\alpha)\to\R$ such that $\p\bar\p\alpha=\mu_1\p\alpha\wedge\bar\p\alpha$ in $U\setminus\Crt(\alpha)$. Clearly, part e) of Corollary \ref{FUN} applies in the present situation so: ${\rm d}\mu_1\wedge{\rm d}\alpha=0$ in $U\setminus\Crt(\alpha)$. 

\begin{propos}\label{Ermia} Let $M^i$ be a face of the cell $U$ and consider, for a point $p\in M^i$, a neighbourhood $V_p$ containing $p$ such that $V_p\setminus M^i$ has exactly two connected components, denoted by $V_p^+$ and $V_p^-$. Then there exist an integer $k$, a pluriharmonic function $\widetilde{\chi}:V_p\to\R$ and a real-analytic, strictly increasing function $\lambda:\widetilde{\chi}(V_p)\to\R$ such that
$$\alpha(x)=(\lambda\circ \widetilde{\chi}(x))^{2k}$$
for every $x\in V_p$.\end{propos}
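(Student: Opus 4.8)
The plan is to build a single pluriharmonic function $\widetilde{\chi}$ living on all of $V_p$, with $M^i=\{\widetilde{\chi}=0\}$, and then to show that $\alpha$ is a real-analytic function of $\widetilde{\chi}$ that vanishes to an even order $2k$ at $0$; the displayed formula will then drop out by extracting a $2k$-th root.

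First I would produce $\widetilde{\chi}$. The half-neighbourhood $V_p^+$ sits inside some cell $U$, and on $U$ the standing relations $\p\bar\p\alpha=\mu_1\,\p\alpha\wedge\bar\p\alpha$, $\mu_1=m\circ\alpha$, $d\mu_1\wedge d\alpha=0$ hold, so Lemma \ref{LIS} (applied exactly as in Lemma \ref{MAR}) furnishes a pluriharmonic $\chi:U\to\R$ and an increasing convex $\lambda_U$ with $\alpha|_U=\lambda_U\circ\chi$. Since by Theorem \ref{Emilia} the minimum set $\{\alpha=0\}$ meets $\overline{U}$ only along the face $M^i$, the function $\chi$ stays bounded near $M^i$; this is the point where case iii-b differs from iii-a, in which $\chi\to-\infty$. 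Boundedness lets me invoke the pluriharmonic extension already used in Proposition \ref{Desdemona} (Lemma \ref{Mario}) to continue $\chi$ across $M^i$ to a pluriharmonic $\widetilde{\chi}$ on a neighbourhood of $V_p^+\cup M^i$ inside $V_p$, constant on $M^i$; subtracting that constant I may assume $M^i=\{\widetilde{\chi}=0\}$, $d\widetilde{\chi}\neq0$ on $M^i$, and $\widetilde{\chi}>0$ on $V_p^+$, $\widetilde{\chi}<0$ on $V_p^-$.

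Next I would prove that $\alpha$ is constant on every level $\{\widetilde{\chi}=d\}$, $d\neq0$. Such a level is Levi flat (a level of a pluriharmonic function) and its Levi foliation is cut out by the closed $1$-form $d^c\widetilde{\chi}$, so every leaf has trivial holonomy; moreover no leaf can be a compact complex curve, since by Theorem \ref{BIR} this would force $X$ to be proper over a curve, contrary to the Grauert type hypothesis. Hence, by Sacksteder's theorem exactly as in Corollary \ref{Lucy2}, all leaves are dense in $\{\widetilde{\chi}=d\}$. Taking a point where $\alpha|_{\{\widetilde{\chi}=d\}}$ is maximal and the dense leaf through it, the maximum principle for the plurisubharmonic $\alpha$ along that complex leaf makes $\alpha$ constant on the leaf, hence on all of $\{\widetilde{\chi}=d\}$. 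Therefore $d\alpha\wedge d\widetilde{\chi}=0$ on $V_p\setminus M^i$, so there $\alpha=g\circ\widetilde{\chi}$ for some function $g$ which is real-analytic because $\alpha$ and $\widetilde{\chi}$ are.

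Finally I would globalize and factor. Restricting to a real-analytic arc transverse to $M^i$ at $p$ and carrying the parameter $\widetilde{\chi}$, the function $\alpha$ extends $g$ real-analytically to a full interval $(-\delta,\delta)$ around $0$; then $\alpha-g\circ\widetilde{\chi}$ is real-analytic on the connected set $V_p$ and vanishes on the nonempty open set $V_p^+$, hence vanishes throughout $V_p$, giving $\alpha=g\circ\widetilde{\chi}$ on all of $V_p$. Because $\{\alpha=0\}\cap V_p=M^i=\{\widetilde{\chi}=0\}$ we have $g(0)=0$ and $g(t)>0$ for $0<|t|<\delta$, so $g$ vanishes to even order $2k$ and $g(t)=t^{2k}u(t)$ with $u$ real-analytic, $u(0)>0$. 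Then $\lambda(t):=t\,u(t)^{1/(2k)}$ is real-analytic with $\lambda'(0)=u(0)^{1/(2k)}>0$, hence strictly increasing after shrinking $V_p$, and $(\lambda\circ\widetilde{\chi})^{2k}=\widetilde{\chi}^{\,2k}\,(u\circ\widetilde{\chi})=g\circ\widetilde{\chi}=\alpha$, as required. The delicate step is the construction of $\widetilde{\chi}$ across the face, i.e.\ the boundedness of the cell's pluriharmonic function at the three-dimensional minimum face, which is precisely the feature separating case iii-b from iii-a; once $\widetilde{\chi}$ exists, the density of the Levi leaves and the real-analytic even-order factorization are comparatively routine.
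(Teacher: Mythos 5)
Your overall route is genuinely different from the paper's, and in principle it can be made to work, but as written it has a real gap at exactly the point you yourself flag as delicate. You assert that the cell function $\chi$ (with $\alpha|_U=\lambda_U\circ\chi$) \emph{stays bounded} near the face $M^i$, and you justify this by appealing to Theorem \ref{Emilia} (the minimum set meets $\overline U$ only along faces). That is a non sequitur: the location of the minimum set does not control the asymptotics of $\theta$ near $t=0$, where $\chi=\theta\circ\alpha$ and $\theta'=\exp(-\int m)$. Whether $\theta$ has a finite limit at $0^+$ depends on the rate at which $m(t)$ blows up, i.e.\ on the vanishing order of $\alpha$ along $M^i$: for a face where $\alpha$ vanishes to order $2k$ one gets $m(t)\sim\frac{2k-1}{2k}t^{-1}$ and $\theta(t)\sim Ct^{1/2k}$ (bounded), while a coefficient $\geq 1$ (e.g.\ a non-analytic $\alpha$ flat to infinite order, or the curve case iii-a) gives $\theta\to-\infty$. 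So finiteness of the limit is precisely where the real-analytic finite flatness order (Proposition \ref{GLO2}) must enter, and your proof never invokes it. The gap is fillable without asymptotics — if $\chi\to-\infty$ at $M^i$, extend $\chi$ by $-\infty$ across $M^i$ to get a plurisubharmonic function on $V_p$ which equals $-\infty$ on the open set $V_p^-$ but not identically, a contradiction (this is exactly the $\chi_1$ argument in the proof of Theorem \ref{Emilia}) — but some such argument must be supplied before Lemma \ref{Mario} can be applied.

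A second, independent error: your middle step claims the leaves of the Levi foliation of $\{\widetilde\chi=d\}$ are dense in that level, ``by Sacksteder's theorem exactly as in Corollary \ref{Lucy2}.'' That corollary (and the minimal-set theory behind it) applies to \emph{compact} levels of the exhaustion; the levels of $\widetilde\chi$ inside the small neighbourhood $V_p$ are local pieces on which the foliation is product-like, and their leaves are certainly not dense. Fortunately this step is superfluous: on $V_p^+\subset U$ you already have $\alpha=\lambda_U\circ\chi=\lambda_U\circ\widetilde\chi$, which gives $d\alpha\wedge d\widetilde\chi=0$ there, and your final identity-theorem argument (extend $g$ along a transverse arc, then $\alpha-g\circ\widetilde\chi\equiv 0$ by analytic continuation from the open set $V_p^+$) only needs the relation on $V_p^+$. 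For comparison, the paper proceeds in the opposite order and thereby avoids both issues: it first builds the signed root $\beta=\pm\alpha^{1/2k}$ across the face using Propositions \ref{GLO2} and \ref{GLO3} (this is where analyticity and the even order enter), checks the degenerate equations $\p\bar\p\beta=\mu_p\,\p\beta\wedge\bar\p\beta$, $d\mu_p\wedge d\beta=0$, applies Lemma \ref{LIS} to get $\chi=\theta\circ\beta$ pluriharmonic with $0$ an \emph{interior} point of $\beta(W_p)$ (so no boundary-limit question ever arises), and removes the exceptional set $T_p$ by Lemma \ref{Silla}; your reflection-first construction trades that for the boundedness claim you left unproved.
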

\begin{proof} Set $M_p=V_p\cap M$. By Proposition \ref{GLO2}, there exists $k\in \N$ such that $\alpha$ is flat of order $2k-1$ on $M_p\setminus T_p$ where $T_p$ is a real analytic subset of $M_p$, $\dim T_p\leq 2$. By Proposition \ref{GLO3}, the function
$$\beta(x)=\left\{\begin{array}{ccl}0&\textrm{if}&x\in M_p\setminus T_p\\\alpha(x)^{1/2k}&\textrm{if}&x\in V_p^+\\-\alpha(x)^{1/2k}&\textrm{if}& x\in V_p^-\end{array}\right.$$
is a real analytic function on $V_p\setminus T_p$, without critical points on $M_p\setminus T_p$; on $V_p\setminus M_p$, the critical points of $\beta$ are the same as those of $\alpha$, so $\dim(\Crt(\beta))\leq 2$.

Let $W_p=V_p\setminus T_p$; since $\beta=\pm \alpha^{1/2k}$ outside $M_p$, direct computation shows that $\p\beta\wedge\bar\p\beta\wedge \p\bar\p\beta=0$ and $\p\bar\p\beta\wedge\p\bar\p\beta=0$ on $V_p^+\cup V_p^-$ and hence, by analyticity, on $W_p$. By Lemma \ref{LAURA2}, there is a real analytic function $\mu_p:W_p\setminus \Crt(\beta)\to\R$ such that $\p\bar\p\beta=\mu_p\p\beta\wedge\bar\p\beta$.

We have, by direct computation,
$$\mu_p=\frac{1-2k}{\beta}+2k\beta^{2k-1}\mu_1$$
and since ${\rm d}\mu_1\wedge{\rm d\alpha}=0$ in $U$, ${\rm d}\mu_p\wedge{\rm d}\beta=0$ in $U\cap V_p$ hence in $W_p\setminus\Crt(\beta)$ by analyticity.

We are now in the position to apply Lemma \ref{LIS}: there exists a real analytic function $\theta:\beta(W_p)\to\R$, strictly increasing on the open interval $\beta(W_p)$ and such that $\chi:=\theta\circ\beta$ is pluriharmonic on $W_p$, with ${\rm d\chi}\wedge{\rm d\beta}=0$ in $W_p$ and so ${\rm d}\chi\wedge{\rm d\alpha}=0$ on $W_p$.

As $0\in \beta(W_p)$ and $\lim_{x\to T_p}\beta(x)=0$, we extend $\chi$ by continuity to $\widetilde{\chi}: V_p\to \R$, which is pluriharmonic on $W_p=V_p\setminus T_p$. By Lemma \ref{Silla}, $\widetilde{\chi}$ is indeed pluriharmonic on $V_p$.

We obtain the thesis by setting $\lambda=\theta^{-1}$.\end{proof}

We thus constructed a pluriharmonic function on $V_p$; up to now, such a function is not uniquely determined by our construction. In fact, we have two ``degrees of freedom'', the choice of $V_p^+$ and $V_p^-$ and the choice of the function $\theta$.
 
The latter issue is solved by Lemma \ref{LIS}, \emph{\rm{iii)}}, where it is proved that, once we choose such a function $\theta$, any other possible function is of the form $a\theta+b$; therefore, we can fix a particular $\theta_0$ by requiring that $\theta_0(0)=0$ and $\theta_0'(0)=1$ (as $0\in \beta(W_p)$). The pluriharmonic function $\widetilde{\chi}$ constructed on $V_p$ with this normalized function $\theta_0$ is denoted by $\chi_p$.

The following result is immediate.

\begin{corol} \label{Ippolita}Let $M^i$ be a face of the cell $U$ and consider, for a point $p\in M^i$, a neighbourhood $V_p$ containing $p$ such that $V_p\setminus M^i$ has exactly two connected components, denoted by $V_p^+$ and $V_p^-$. Then there exists a unique  pluriharmonic function $\chi_p:V_p\to\R$, positive in $V_p^+$, such that
$$\lim_{V_p^+\ni x\to M^i}\frac{(\chi_p(x))^{2k}}{\alpha(x)}=1$$
where $2k-1$ is the order of flatness of $\alpha$ along $M^i\cap V_p$.\end{corol}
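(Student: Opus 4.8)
The plan is to read $\chi_p$ directly off Proposition \ref{Ermia} after imposing the normalization $\theta_0(0)=0$, $\theta_0'(0)=1$ discussed just above, and then to verify the two asserted properties by a short computation, reserving part \emph{iii)} of Lemma \ref{LIS} for uniqueness. Concretely, I would set $\chi_p=\theta_0\circ\beta$, where $\beta$ and the strictly increasing $\theta_0$ are as in the proof of Proposition \ref{Ermia} with $\theta_0$ normalized, and record that on $V_p^+$ one has $\beta=\alpha^{1/2k}>0$, so that $\alpha=\beta^{2k}$; since $\theta_0$ is strictly increasing with $\theta_0(0)=0$, this gives $\chi_p=\theta_0(\beta)>0$ on $V_p^+$, which is the required positivity.

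For the limit I would simply compute, on $V_p^+$,
$$\frac{(\chi_p(x))^{2k}}{\alpha(x)}=\frac{(\theta_0(\beta(x)))^{2k}}{\beta(x)^{2k}}=\left(\frac{\theta_0(\beta(x))}{\beta(x)}\right)^{2k}\,,$$
and then observe that as $x\to M^i$ inside $V_p^+$ we have $\alpha(x)\to 0$ (the value of $\alpha$ on $\widetilde M$ being normalized to $0$), hence $\beta(x)\to 0^+$. Because $\theta_0(0)=0$, the quotient $\theta_0(\beta)/\beta$ tends to $\theta_0'(0)=1$, so the whole expression tends to $1$. This settles existence, with $\lambda=\theta_0^{-1}$ as in Proposition \ref{Ermia}.

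For uniqueness I would take any competing pluriharmonic function $\chi^\ast$ on $V_p$ that is compatible with the $\alpha$-foliation, i.e. constant on the level sets of $\alpha$ (equivalently $\mathrm d\chi^\ast\wedge\mathrm d\beta=0$ on $W_p=V_p\setminus T_p$) — this is exactly the class in which the construction of Proposition \ref{Ermia} lives. By part \emph{iii)} of Lemma \ref{LIS}, applied on $W_p$ to the function $\beta$, such a $\chi^\ast$ must be of the form $\chi^\ast=c\,\chi_p+c_1$ with $c,c_1\in\R$, and this identity extends to $V_p$ by continuity (Lemma \ref{Silla}). Letting $x\to M^i$ in $V_p^+$ gives $\chi^\ast\to c_1$; since $(\chi^\ast)^{2k}/\alpha$ must stay bounded while $\alpha\to 0$, necessarily $c_1=0$. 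Then $(\chi^\ast)^{2k}/\alpha=c^{2k}(\chi_p)^{2k}/\alpha\to c^{2k}$, so the limit condition forces $c^{2k}=1$, i.e. $|c|=1$, and positivity of $\chi^\ast$ on $V_p^+$ (where $\chi_p>0$) then forces $c=1$, whence $\chi^\ast=\chi_p$.

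The only genuinely non-computational point I expect is the reduction to the form $c\,\chi_p+c_1$, namely justifying that the relevant competitors are constant on the level sets of $\alpha$ so that Lemma \ref{LIS} \emph{iii)} applies. On $V_p^+\subset U$ this is where the Grauert-type hypothesis enters: the regular levels of $\alpha$ are Levi flat and, by Corollary \ref{Lucy2}, foliated by dense leaves, and a pluriharmonic function that is constant along the dense leaves is constant on each level; once $\mathrm d\chi^\ast\wedge\mathrm d\alpha=0$ holds on $V_p^+$ (and symmetrically on $V_p^-$), real analyticity propagates $\mathrm d\chi^\ast\wedge\mathrm d\beta=0$ across $W_p$. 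I would flag that this leaf-density argument is clean precisely for functions in the foliation-compatible class produced by the construction; everything beyond it is the elementary bookkeeping of the two constants $c,c_1$ carried out above.
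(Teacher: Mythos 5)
Your existence computation and the $(c,c_1)$ bookkeeping reproduce what the paper treats as the (unwritten) proof: the corollary is derived as ``immediate'' from Proposition \ref{Ermia} together with the normalization $\theta_0(0)=0$, $\theta_0'(0)=1$ and Lemma \ref{LIS}, part iii), i.e.\ uniqueness is obtained \emph{within the class} of pluriharmonic functions $\chi^\ast$ satisfying ${\rm d}\chi^\ast\wedge{\rm d}\beta=0$ on $W_p$. Up to that point your argument is correct and is the same as the paper's. The genuine gap is your reduction step: the claim that an arbitrary competitor (pluriharmonic on $V_p$, positive on $V_p^+$, satisfying the limit condition) must be constant on the level sets of $\alpha$, so that Lemma \ref{LIS} iii) applies.

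The leaf-density mechanism you invoke cannot deliver this. First, a pluriharmonic function is not automatically constant along the leaves of the Levi foliation: its restriction to a leaf is merely harmonic, and in Corollary \ref{Lucy2} and Corollary \ref{FUN} constancy on a level is extracted from the maximum principle on a \emph{compact} level set in which a leaf is dense. Second, and decisively, density of leaves is a global property of the compact level $\{\alpha=c\}\subset X$; your competitor is defined only on $V_p$, and the pieces $\{\alpha=c\}\cap V_p$ are non-compact, with local plaques that are not dense in them, so no maximum-principle/density argument can be run inside $V_p$. Concretely, in the flat local model $\alpha=v^{2k}$ on a polydisc (coordinates $(z,w)$, $w=u+iv$, $M^i=\{v=0\}$), the function $\chi^\ast={\rm Im}(w+\varepsilon w^2)=v(1+2\varepsilon u)$ is pluriharmonic, positive on $\{v>0\}$ for small $\varepsilon>0$, constant on every local leaf $\{w={\rm const}\}$, and yet not constant on the levels of $\alpha$; it is excluded only by the limit condition, which your reduction step never uses. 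As written, then, you have proved uniqueness only in the foliation-compatible class (which, to be fair, is all the paper needs in Proposition \ref{Titania} and its corollary). To get uniqueness among all pluriharmonic competitors, as the statement literally asserts, use the limit condition directly: it forces $\chi^\ast=0$ on $M^i\cap V_p$, and, since away from $T_p$ the face is the regular zero set of $\chi_p$, one can choose local holomorphic coordinates with $\chi_p={\rm Im}\,w=v$ and $M^i=\{v=0\}$; writing $\chi^\ast=v\rho^\ast$ (possible by real analyticity), the limit condition gives $\rho^\ast=1$ on $\{v=0\}$, so $\chi^\ast-\chi_p$ is a harmonic function whose value and gradient both vanish along the hypersurface. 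Uniqueness for the Cauchy problem for the Laplacian (or just real analyticity) then yields $\chi^\ast\equiv\chi_p$ on the connected set $V_p$.
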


On the other hand, the sign issue cannot be satisfactorily solved, particularly if we take into account that some face could be internal. Therefore, for every point $p\in M^i$, we associate a \emph{pair} of germs of pluriharmonic functions $\{(\chi_p, V_p), (-\chi_p, V_p)\}$, since an exchange between $V_p^+$ and $V_p^-$ will only produce a change in the sign of $\beta$ and hence of $\chi_p$.

This is indeed a well defined germ on every point of $M^i$: by Corollary \ref{Ippolita}, if we take another neighbourhood $V'_p$ of $p$, different from $V_p$, but still such that $V'_p\setminus M^i$ has two connected components, the pair of germs we obtain in the end of the construction will be the same, i.e. we will have $\{(\chi'_p, V'_p), (-\chi'_p, V_p)\}$  with $\chi_p=\pm\chi'_p$ on $V_p\cap V'_p$.

\begin{propos}\label{Titania} The function $\alpha$ has the same order of flatness along all the faces, including internal ones.\end{propos}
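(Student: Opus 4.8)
The plan is to reduce the equality of flatness orders to the order of vanishing of a single real-analytic function attached to each cell, and then to propagate this datum from cell to cell across the connecting faces. Recall that, by Theorem \ref{Emilia}, every face lies in $\widetilde{M}=\{\alpha=\min_X\alpha\}$, a value I normalize to $0$. Fix a cell $U$. Since $U$ is a connected component of $X\setminus\widetilde{M}$, it contains no $3$-dimensional component of $\Crt(\alpha)$, so $\rdim\Crt(\alpha)\cap U\le 2$, and the infimum $0$ of $\alpha$ on $U$ is not attained; hence Lemma \ref{MAR} (with $Z_0=\varnothing$) produces a proper pluriharmonic function $\chi_U:U\to\R$ and an increasing convex function $\lambda_U$ with $\alpha=\lambda_U\circ\chi_U$ on $U$. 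Because every face of $U$ is contained in $\{\alpha=0\}$ and $\lambda_U$ is strictly increasing, $\chi_U(x)$ tends, as $x$ approaches \emph{any} face of $U$, to the common value $c_0:=\inf_U\chi_U$; I will verify along the way that $c_0$ is finite.

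The core step is a local-to-global comparison near a single face $M^i$ of $U$. Fix $p\in M^i$ and the normalized germ $\chi_p$ of Corollary \ref{Ippolita}, for which $\alpha=\chi_p^{2k_i}(1+o(1))$ as $x\to M^i$ inside the cell, where $2k_i-1$ is the flatness order of $\alpha$ along $M^i$. On the connected cell-side piece $V_p^+\subset U$, both $\chi_p$ and $\chi_U$ are pluriharmonic and annihilate $d\alpha$, so the uniqueness clause \emph{iii)} of Lemma \ref{LIS} (whose hypotheses hold on $V_p^+\subset U$) forces $\chi_U=a\,\chi_p+b$ there, for constants $a>0$ and $b$. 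Letting $x\to M^i$ gives $\chi_p\to 0$, whence $b=\lim\chi_U=c_0$ is finite and equal to the value above. Substituting into $\alpha=\lambda_U\circ\chi_U$ and writing $s=a\chi_p$ yields
$$
\lim_{s\to 0^+}\frac{\lambda_U(c_0+s)}{s^{2k_i}}=a^{-2k_i}\in(0,+\infty),
$$
so $\lambda_U$ vanishes to order exactly $2k_i$ at $c_0$. Since $\lambda_U$ and $c_0$ depend only on $U$ and not on the chosen face, the integer $k_i$ is the same for every face of $U$ — internal faces included, because for them both one-sided germs $V_p^{\pm}$ lie in $U$ and produce the same comparison. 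This matching of vanishing orders is the step I expect to be the main obstacle: it hinges on knowing that $c_0$ is finite and common to all faces of $U$, and on having the hypotheses of Lemma \ref{LIS} available on the one-sided germ $V_p^+$.

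It remains to prove independence from the cell. The flatness order $2k_i-1$ of $\alpha$ along a face is an intrinsic local invariant (Proposition \ref{GLO2}), so if $M^i$ is a connecting face between cells $U$ and $U'$, the order read off from $\lambda_U$ coincides with the one read off from $\lambda_{U'}$; thus $U$ and $U'$ carry the same common order. Finally, the set $W=\bigcup_i W^i=X\setminus\bigcup_i(\overline{M}^i\setminus M^i)$ is connected, as shown in the proof of Theorem \ref{Emilia}, and any path in $W$ joining two cells must traverse a chain of connecting faces (internal faces keep a path within a single cell); hence the graph whose vertices are the cells and whose edges are the connecting faces is connected. Propagating the common order along this graph shows that all cells, and therefore all faces — internal and connecting alike — share one and the same order of flatness, which is the assertion of the Proposition.
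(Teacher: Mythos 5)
Your argument is correct in substance, but the mechanism you use for the crucial step is genuinely different from the paper's. The paper works \emph{face-centered}: for a connecting face $M^i$ it applies Lemma \ref{LIS} and Corollary \ref{Ippolita} on the whole double cell $W^i=U^+\cup M^i\cup U^-$, producing a single pluriharmonic $\chi^i$ with $\alpha=(\lambda\circ\chi^i)^{2k}$ on $W^i$, positive on $U^+$ and negative on $U^-$; since $\chi^i\to 0$ at ${\rm b}W^i$, the Hopf lemma \cite[Lemma 3.8]{ST} gives ${\rm d}\chi^i\neq 0$ at every face $M^j\subseteqq{\rm b}W^i$, which forces the order $2k-1$ simultaneously at all faces (internal ones included) adjacent to either $U^+$ or $U^-$. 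You instead work \emph{cell-centered}: the global pluriharmonic $\chi_U$ of Lemma \ref{MAR} on one cell, compared with the normalized germ $\chi_p$ of Corollary \ref{Ippolita} through the uniqueness clause iii) of Lemma \ref{LIS}, shows that the flatness order along any face of $U$ is encoded in the order of vanishing of $\lambda_U$ at the common limit value $c_0$, a datum depending only on $U$; cells are then compared through the intrinsic character of the flatness order at connecting faces. Both proofs end with the same propagation step --- your cell graph being connected is exactly the paper's statement that $H'=\bigcup W^i$ (union over connecting faces) is connected. What your route buys: you avoid the Hopf lemma and the extension of $\chi^i$ across the faces of ${\rm b}W^i$ that the paper implicitly needs before differentiating there; the finiteness of $c_0$, which is the real crux, falls out of the affine comparison. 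What the paper's route buys: it treats the two cells adjacent to a connecting face at once and never has to discuss $c_0$ or vanishing orders of $\lambda_U$. Two spots in your write-up deserve tightening: first, the claim $a>0$ should come \emph{after} you know $b=c_0$ is finite (it then follows from $\chi_U-c_0>0$ and $\chi_p>0$ on $V_p^+$; for the finiteness of $b$ the sign of $a$ is irrelevant); second, the limit $\lim_{s\to 0^+}\lambda_U(c_0+s)/s^{2k_i}=a^{-2k_i}$ is cleanest derived from the functional identity $\theta_U(t)=a\,\theta_0(t^{1/2k_i})+c_0$ for all small $t>0$, which holds because $\alpha$ attains every small positive value on $V_p^+$; taking the limit along $x\to M^i$ alone requires an extra word on why all small values of $s=a\chi_p$ are realized by points near $M^i$. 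Finally, when you invoke Lemma \ref{LIS} on $V_p^+$, hypothesis 4) ($\alpha(V_p^+)=\alpha(V_p^+\setminus\Crt(\alpha))$) should be checked; it follows from part i) of Lemma \ref{MAR} (no local minima in $U$) together with the maximum principle, the same verification the paper itself makes when it takes $V_p=W^i$.
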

\begin{proof} Let $M^i$ be a connecting face between the cells $U^+$ and $U^-$. For any $p\in M^i$, we can take $V_p=W^i$, because $\dim_\R(W^i\cap\Crt(\beta))\leq 2$ and $\beta(W^i)=\beta(W^i\setminus\Crt(\beta))$ (as $\beta$ does not have absolute maximum or minimum points in $W^i$); therefore, we can apply Lemma \ref{LIS} to the domain $W^i$ and the function $\beta$.

By Corollary \ref{Ippolita}, we have a pluriharmonic function $\chi^i:=\chi_p$, canonical up to sign, such that there exists a strictly increasing function $\lambda:=\theta^{-1}$ so that
\begin{equation}
\label{Puck}
\alpha(x)=(\lambda\circ\chi^i)^{2k}(x)
\end{equation}
for $x\in W^i$. We also ask that $\chi^i>0$ on $U^+$ and $\chi^i<0$ on $U^-$.

We note that, as $\alpha\vert_{bW^i}=0$, also $\chi\vert_{bW^i}=0$. Now, let $M^j\subseteqq bW^i$ be a face; as $\chi^i>0$ on $U^+$ and $\chi^i<0$ on $U^-$, by Hopf Lemma \cite[Lemma 3.8]{ST}, we have that ${\rm d}\chi^i(x)\neq0$ for $x\in M^j$. From Equation \eqref{Puck}, it follows that the the order of flatness of $\alpha$ on $M^j$ is $2k-1$, for every $M^j$ adiacent to either $U^+$ or $U^-$.

Now, the set
$$H'=H\setminus\{\textrm{internal faces}\}$$
is open and connected, because every internal face is contained in the closure of a unique cell. Obviously every open set $W^i$ is connected and
$$H'=\!\!\!\!\!\!\bigcup_{M^i\textrm{ connecting}}\!\!\!\!\!\! W^i\;,$$
so, for every two cells $U^1$ and $U^2$ there exists a chain $W^{i_1},\ldots, W^{i_m}$, with $W^{i_h}\cap W^{i_{h+1}}\neq\emptyset$ for $h=1,\ldots, m-1$, connecting them inside $H'$. The order of flatness of $\alpha$ on the faces in ${\rm b}W^{i_1}$ (internal or connecting) will be the same as the order of flatness of $\alpha$ on the faces of $bW^{i_2}$ (internal or connecting), because there are connecting faces belonging to both, and so on, proving our thesis. \end{proof}

\begin{corol} 
Let $U$ be a cell and $M^i$ a connecting face between the cells $U$ and $U'$. For any face $M^j$ of $U$ and any $p\in M^j$, let $V_p$ be a neighbourhood of $p$ such that $V_p\setminus M^j$ has exactly two components. Then the pairs $\{(\chi^i, W^i), (-\chi^i, W^i)\}$ and $\{(\chi_p, V_p), (-\chi_p, V_p)\}$ induce the same germs in every $q\in V_p\cap W^i$.
\end{corol}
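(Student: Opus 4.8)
The plan is to argue one connected component at a time and to reduce the coincidence of germs to the affine rigidity already proved in Lemma \ref{LIS}, iii), fixing the two remaining constants by the boundary normalisation recorded in Corollary \ref{Ippolita}. Fix a connected component $\Omega$ of $V_p\cap W^i$; if the overlap is empty there is nothing to prove. Since $\Omega\subseteq V_p$ the function $\chi_p$ is pluriharmonic on $\Omega$, and since $\Omega\subseteq W^i$ so is $\chi^i$; moreover both satisfy $\mathrm d\chi\wedge\mathrm d\alpha=0$, hence $\mathrm d\chi\wedge\mathrm d\beta=0$ for the associated root function $\beta$ of Proposition \ref{Ermia}. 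The goal is to show that one of $\chi_p\equiv\chi^i$ or $\chi_p\equiv-\chi^i$ holds throughout $\Omega$, which is precisely the assertion that the two pairs of germs agree at every $q\in\Omega$.

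First I would invoke the uniqueness clause Lemma \ref{LIS}, iii), on $\Omega$ for the root function $\beta$: it yields a single affine relation $\chi_p=c\,\chi^i+c_1$ with $c,c_1\in\R$ constant on the connected set $\Omega$. Next I would pin down $c_1$ and $c$ at the boundary, using the geometric fact that $\overline\Omega$ meets a face. Indeed $p\in M^j\subseteq\overline U\subseteq\overline{W^i}$, so every component of $V_p\cap W^i$ accumulates at a face (at $M^j$, or at $M^i$ when $M^j=M^i$). By Theorem \ref{Emilia} one has $\alpha\equiv0$ on $\widetilde M$, so as $x$ tends to that face both $\chi_p$ and $\chi^i$ tend to $0$ (they are of the form $\theta_0\circ\beta$ with $\theta_0(0)=0$); passing to this limit in $\chi_p=c\,\chi^i+c_1$ forces $c_1=0$.

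Finally, to obtain $c=\pm1$ I would compare the boundary normalisations at that same face. Since $\alpha=(\lambda\circ\chi_p)^{2k}=(\lambda\circ\chi^i)^{2k}$ with $\lambda=\theta_0^{-1}$ satisfying $\lambda(0)=0$ and $\lambda'(0)=1$, one has $\lim(\chi_p)^{2k}/\alpha=\lim(\chi^i)^{2k}/\alpha=1$ along $\Omega$ towards the face: for $\chi_p$ this is literally Corollary \ref{Ippolita}, while for $\chi^i$ it follows from $\lambda'(0)=1$, independently of which face was originally used to normalise it (as $\chi^i\to0$ there, $(\chi^i)^{2k}/\alpha=(\chi^i/\lambda(\chi^i))^{2k}\to(1/\lambda'(0))^{2k}=1$). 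Substituting $\chi_p=c\,\chi^i$ gives $c^{2k}=1$, hence $c=\pm1$ and $\chi_p=\pm\chi^i$ on $\Omega$, as required. I expect the main obstacle to be the bookkeeping around Lemma \ref{LIS}, iii): one must verify that on $\Omega$ the hypotheses of that lemma genuinely hold for $\beta$ (connectedness of $\Omega\setminus\Crt(\beta)$, the equality $\beta(\Omega)=\beta(\Omega\setminus\Crt(\beta))$, and $\dim_\R\Crt(\beta)\le2$ inside the cells), and that $\overline\Omega$ really reaches a face so the two normalisations can be compared there. As an alternative that sidesteps Lemma \ref{LIS}, iii), one could check directly that $\lambda$ is odd — the coefficient $m$ in \eqref{Claudius} is odd because $\mu_1$ is a function of $\alpha=\beta^{2k}$ — so that $(\lambda\circ\chi_p)^{2k}=(\lambda\circ\chi^i)^{2k}$ gives $\lambda\circ\chi_p=\pm\lambda\circ\chi^i$ and hence $\chi_p=\pm\chi^i$ by injectivity of $\lambda$.
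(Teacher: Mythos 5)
Your proof is correct in substance and follows essentially the same route as the paper: the paper disposes of this corollary in one line, as a consequence of the uniqueness and normalisation of Corollary \ref{Ippolita} together with Proposition \ref{Titania}, and your combination of the affine rigidity of Lemma \ref{LIS}, iii) with the boundary normalisation is exactly the mechanism behind that uniqueness. The bookkeeping items you flag do hold, by arguments the paper uses elsewhere: on a component $\Omega$ of $V_p\cap W^i$ one has $\rdim\Crt(\beta)\le 2$ (as in the proof of Proposition \ref{Ermia}), hence $\Omega\setminus\Crt(\beta)$ is connected; $\beta(\Omega)=\beta(\Omega\setminus\Crt(\beta))$ because $\beta$ has no local extrema off the faces; and $\oli{\Omega}$ does reach a face, since for $M^j\neq M^i$ the components of $V_p\cap W^i$ are among $V_p^{\pm}$, while for $M^j=M^i$ one has $V_p\subseteq W^i$.

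The one step you must make explicit --- and the only place where, as written, your argument is unjustified --- is the use of a single exponent $2k$ in $\alpha=(\lambda\circ\chi_p)^{2k}=(\lambda\circ\chi^i)^{2k}$. The exponent attached to $\chi_p$ by Corollary \ref{Ippolita} is determined by the order of flatness of $\alpha$ along $M^j$, while the exponent in \eqref{Puck} for $\chi^i$ is determined by the order of flatness along $M^i$; a priori these are different integers, and if they differed, the comparison $c^{2k}=1$ (and likewise your alternative argument through oddness of $\lambda$) would not make sense. Their equality is precisely Proposition \ref{Titania}, which is the ingredient the paper's own proof singles out, so cite it at this point. (Using one symbol $\lambda$ for the two a priori different increasing functions is harmless in your main argument, since you only use $\lambda(0)=0$ and $\lambda'(0)=1$ for each of them; but note that your alternative argument via oddness genuinely needs the two $\lambda$'s to coincide, which requires a further identification of the coefficients $m$ in \eqref{Claudius} by analytic continuation across the overlap --- another reason to prefer your main route.)
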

\begin{proof} 
This is a simple consequence of Corollary \ref{Ippolita}, once we know, from Proposition \ref{Titania}, that the order of flatness of $\alpha$ is the same along every face.
\end{proof}

Up to this point, we have given an open cover of $W$, constituted by the open sets $W^i$ corresponding to connecting faces and by the open sets $V_p$, for $p\in M^j$, as $M^j$ ranges among internal faces; on each of these open sets, we have constructed an \emph{unordered} pair of pluriharmonic functions, which differ just for the sign, i.e. of the form $\pm \chi$.

On the intersection of two such open sets, the pairs coincide, i.e. their restrictions to the intersection give the same pair of pluriharmonic functions; therefore, we may also describe what we have obtained so far as a subsheaf $\Chi$ of the sheaf of pluriharmonic functions on $W$, whose stalk at any point of $W$ is made exactly of two germs, which differ just for the sign.

\subsection{Extension to critical components of lower dimension}

We define the set
$$S:=X\setminus W=\bigcup_{i\in I}(\overline{M}^i\setminus M^i)\;,$$
consisting of the singular points of $\widetilde{M}$.
\begin{rem}\label{Oberon}
$S$ is of real dimension $2$ or lower. Given $p\in S$ and  a neighbourhood $V$ of it, if $\chi\in\Gamma(V\cap W, \Chi)$, then there is a unique continuous extension of $\chi$ to $\widetilde{\chi}:V\to \R$, obtained by setting $\widetilde{\chi}\vert_{S\cap V}\equiv 0$. By Lemma \ref{Silla}, $\widetilde{\chi}$ is then pluriharmonic on $V$.\end{rem}

Moreover, if $\Gamma(V\cap W,\Chi)\neq\emptyset$, then it necessarily has two elements, of opposite sign, both extending to pluriharmonic functions on $V$. Therefore, in order to extend our subsheaf $\Chi$ to the whole of $X$, we only need to show that for any point $p\in S$ there is a neighbourhood $V$ such that $\Gamma(W\cap V, \Chi)\neq\emptyset$.

The existence of sections depends \emph{prima facie} on the topological properties of $V\setminus S$ and of the combinatorial properties of the open cover of it given by the sets $W^i$ and $V_p$ defined above, where we have sections. The combinatorics of such cover is ultimately determined by the topology of $V\setminus \widetilde{M}$, which we propose to examine in the next pages.

To this aim, we employ the theory of stratified spaces, devoleped by Whitney, Mather, Thom and others in the 60s and 70s (see \cite{math} for the original paper, \cites{tro, pfl} for more detailed explanations); in what follows, we will just recall the main concepts and the results we need, avoiding many technical, although important, details that the interested reader can find in \cite{tro}. Precise references will be given in the next pages, as we state the relevant definitions and theorems.

\begin{defin}[Definition 1.1 in \cite{tro}]\label{strat}
Let $Z$ be a closed subset of a real analytic manifold $X$. A \emph{real analytic stratification of} $Z$ is a filtration by closed subsets
$Z=Z_d\supset Z_{d-1}\supset\cdots\supset Z_1\supset Z_0$ such that each difference $Z_i\setminus Z_{i-1}$ is a real analytic submanifold of $X$ and is of dimension $i$, or empty. Each connected component of $Z_i\setminus Z_{i-1}$ is called a {\it stratum}. Thus $Z$ is a disjoint union of strata.
\end{defin} 

Many patologies can be found among stratified spaces. To avoid them, at least to some extent, additional conditions are usually required to hold, namely the \emph{frontier condition} and \emph{Whitney's conditions $(a)$ and $(b)$}.

The frontier condition asks that, whenever $S$ and $T$ are two strata such that $S\cap\oli T\neq \emptyset$, we have $S\subseteqq\oli T$ (cfr \cite[Definition 1.2]{tro}). Whitney's conditions are more involved and impose some restrictions on the behaviour of tangent spaces when going from one stratum to another adiacent to it. For the precise statements see \cite[Definition 2.1]{tro}.

\begin{defin}\label{Wstrat} A locally finite (i.e whose strata are a locally finite family) stratification satisfying the frontier condition and Whitney's conditions $(a)$ and $(b)$ is called a \emph{Whitney stratification}.\end{defin}

In 1965, Whitney proved the following (see \cite[Theorem 2.1]{tro} and the references given therein).

\begin{teorema} Every real analytic variety admits a Whitney stratification whose strata are real analytic manifolds\end{teorema}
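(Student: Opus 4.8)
The plan is to argue by induction on dimension, exploiting two facts: that the singular locus of a real analytic variety is again a real analytic variety of strictly smaller dimension, and Whitney's analytic lemma that the locus where his conditions fail is ``thin''. Let $Z\subset X$ be a real analytic variety with $d=\dim Z$. First I would produce a preliminary filtration by iterated singular loci: since $\mathrm{Sing}(Z)$ is a closed real analytic subset with $\dim\mathrm{Sing}(Z)<d$ and $Z\setminus\mathrm{Sing}(Z)$ is a real analytic manifold, the chain $Z\supseteq\mathrm{Sing}(Z)\supseteq\mathrm{Sing}(\mathrm{Sing}(Z))\supseteq\cdots$ is finite and yields a filtration $Z=Z_d\supseteq Z_{d-1}\supseteq\cdots\supseteq Z_0$ by closed analytic subsets whose successive differences $Z_i\setminus Z_{i-1}$ are real analytic submanifolds of dimension $i$. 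This is already a real analytic stratification in the sense of Definition \ref{strat}; what it lacks, a priori, are Whitney's conditions $(a)$ and $(b)$ and the frontier condition.

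The heart of the matter is the following refinement step. Given a lower stratum $S$ and a higher-dimensional stratum $T$ with $S\subseteq\oli T$, let $B(S,T)\subseteq S$ be the set of points of $S$ at which the pair $(T,S)$ violates Whitney's condition $(b)$ (which implies $(a)$). Whitney's lemma asserts that $B(S,T)$ is contained in a semianalytic subset of $S$ of dimension strictly less than $\dim S$. This is the genuinely nontrivial ingredient, resting on the analytic geometry of limits of tangent spaces in the Grassmannian together with {\L}ojasiewicz's structure theory of semianalytic sets; I would invoke it in the form recorded in \cite{tro}. Granting it, I would refine the filtration from the top downward: having secured Whitney regularity among all strata of dimension exceeding $k$, I would enlarge $Z_k$ by adjoining the closure of the union of the bad sets $B(S,T)$ with $\dim S=k$ and $\dim T>k$. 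Since each such bad set has dimension $<k$, the enlarged skeleton still has $k$-dimensional smooth difference, while the adjoined locus drops into dimension $\le k-1$ and is handled at the next stage. The strict descent of dimension guarantees termination.

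It remains to secure local finiteness and the frontier condition. Local finiteness of the strata follows from the local finiteness of the decomposition of an analytic set into its local irreducible components. As for the frontier condition, once the strata are taken to be the connected components of the differences $Z_i\setminus Z_{i-1}$, it is a classical consequence of Whitney's condition $(b)$; alternatively it can be forced by one more dimension-decreasing refinement, pushing into the next skeleton the (proper, analytic) set $S\cap\oli T$ whenever $S\not\subseteq\oli T$. Either way the construction terminates by descent on dimension and yields a Whitney stratification of $Z$ by real analytic manifolds, as required. The main obstacle is, without question, the thinness of the bad set $B(S,T)$ in the refinement step; all the surrounding construction is bookkeeping organized by the induction on dimension.
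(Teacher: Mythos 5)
You should know at the outset that the paper offers no proof of this statement: it is Whitney's 1965 theorem, quoted as a black box with a pointer to \cite[Theorem 2.1]{tro} and the references given therein, and its only role in the paper is to make the Thom--Mather tubular neighbourhood theorem (Theorem \ref{tub}) applicable to the compact analytic set $\widetilde{M}$. So the comparison here is between your sketch and the classical literature, not an argument internal to the paper. Your outline is the standard one --- filter by iterated singular loci, refine from the top dimension downward by pushing the set $B(S,T)$ of $(b)$-irregular points into the lower skeleton, invoke its thinness, and recover the frontier condition either from connectedness plus $(b)$-regularity or by one further dimension-decreasing refinement --- and it is sound as a reduction to the cited lemma. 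Two points deserve care. First, in the real analytic category your opening step is too optimistic: real analytic sets need not be coherent, so $\mathrm{Sing}(Z)$ is in general only semianalytic rather than analytic, and the differences of iterated singular loci need not be pure-dimensional; the standard remedy, which is what your appeal to {\L}ojasiewicz amounts to, is to run the entire downward induction in the semianalytic category (which is closed under closures, unions, and the bad-set operation) and to organize the skeleta by dimension rather than literally by iterated singular loci. Second, the genuinely hard analytic content --- that $B(S,T)$ is semianalytic of dimension strictly less than $\dim S$ --- is invoked, not proved, so your argument, exactly like the paper's citation, ultimately rests on the references; that is legitimate, but it makes your proposal a correct organization of the classical proof rather than a self-contained one. (A minor slip: in the refinement step the closures of the bad sets must be adjoined to $Z_{k-1}$, not to $Z_k$; your next sentence, where the adjoined locus ``drops into dimension $\le k-1$'', shows you meant the former.)
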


In particular, as a Whitney stratification is locally finite, a compact analytic variety admits a Whitney stratification with a finite number of strata. Whitney's theorem allows us to apply to real analytic varieties the following \emph{local topological triviality result} for stratified spaces (see \cite[Theorem 3.2]{tro}).

\begin{teorema}[Thom-Mather Tubular Neighbourhood Theorem]\label{tub} 
Let 
$$
Z=Z_d\supset Z_{d-1}\supset\cdots\supset Z_1\supset Z_0
$$
be a Whitney stratified subset of a real analytic manifold $X$. Then for every stratum $Y$ and each point $y_0\in Y$ there is a ``tubular'' neighborhood $G$ of $y_0$ in $X$, a stratified set {\rm (}a ``link''{\rm)} $L\subset S^{k-1}$ {\rm(}a $(k-1)$-dimensional sphere{\rm)} and a homeomorphism {\rm(}of stratified spaces{\rm)}
$$
h:\big(G,G\cap Z,G\cap Y\big)\longrightarrow\big(G\cap Y\big)\times\big(B^k,{\rm c}(L),O_k\big)
$$
where $k=\rcodim Y$ in $X$, $B^k$ is the open $k$-ball, ${\rm c}(L)$ is the cone on the link $L$ with vertex $O_k$, the center of $B^k$.
\bigskip
$L$ is stratified, 
$$
L=L_{d-k-1}\supset L_{d-k-2}\supset\cdots\supset L_1\supset L_0,
$$ 
induces a stratification of the cone
$$
{\rm c}(L)={\rm c}(L)_{d-k-1}\supset {\rm c}(L)_{d-k-2}\supset\cdots\supset {\rm c}(L)_1\supset {\rm c}(L)_0\supset \{O_k\}
$$
{\rm (}with $h(y_0)=O_k${\rm)}.
\end{teorema}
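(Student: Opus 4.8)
The plan is to establish the asserted local topological triviality by means of the Thom--Mather machinery of control data and controlled vector fields, culminating in an application of the first isotopy lemma. I would not attempt a direct construction of $h$; instead I would produce it as the composition of a product trivialization along the stratum $Y$ with a conical trivialization of the normal slice.

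First I would equip the Whitney stratified set $Z$ with a system of \emph{control data}: for each stratum $S$ a tubular neighbourhood $T_S$ in $X$ together with a smooth projection $\pi_S:T_S\to S$ and a radial function $\rho_S:T_S\to[0,\infty)$ with $\rho_S^{-1}(0)=S$, chosen so that on overlaps the commutation relations $\pi_S\circ\pi_{S'}=\pi_S$ and $\rho_S\circ\pi_{S'}=\rho_S$ hold wherever defined. The existence of such compatible data is where Whitney's conditions $(a)$ and $(b)$ enter decisively: condition $(b)$ guarantees that the tubes of a stratum $S'$ in the closure of $S$ meet $S$ suitably and that the projections can be arranged to commute. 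I would obtain the data by downward induction on dimension, extending control from lower skeleta. Having fixed the data around the stratum $Y$ through $y_0$, I would consider the map $(\pi_Y,\rho_Y):T_Y\to Y\times[0,\infty)$, which near $y_0$ is a proper stratified submersion onto $(G\cap Y)\times[0,\varepsilon)$.

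To trivialize this submersion I would construct \emph{controlled vector fields}: given a coordinate frame on the base $G\cap Y$, I lift each basis field to a vector field $v_j$ on $T_Y$ that is tangent to every stratum, projects to the given field under $d\pi_Y$, and satisfies $v_j(\rho_Y)=0$, so that its flow preserves the levels of the radial function. The controlled lifting lemma produces such $v_j$, and the control conditions ensure they are continuous across strata. Integrating their flows yields a stratum-preserving homeomorphism from $(G\cap Y)$ times a normal slice $N$ onto $G$. It then remains to identify $N=\pi_Y^{-1}(y_0)\cap G$ with a cone: on $N$ the function $\rho_Y$ is a proper radial function with a single minimum at $y_0$, and the flow of the controlled lift of the radial direction retracts $N$ onto $y_0$ along the levels of $\rho_Y$. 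This exhibits $N$ as the cone $\mathrm{c}(L)$ over the link $L=N\cap\rho_Y^{-1}(\varepsilon)$, a Whitney stratified subset of a sphere $S^{k-1}$ with $k=\rcodim Y$. Combining the two trivializations gives the homeomorphism $h$ respecting all strata.

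The hard part will be the construction of compatible control data together with the integration of the controlled vector fields: one must verify that the lifted fields, though only continuous across the stratification, generate well-defined continuous flows --- this is precisely the content of the first Thom--Mather isotopy lemma --- and that properness of $(\pi_Y,\rho_Y)$ near $y_0$ guarantees completeness of these flows, so that the trivializing homeomorphism is defined on a full neighbourhood. Whitney's conditions $(a)$ and $(b)$ are exactly what make both the control data and the controlled lifts exist, which is why the hypothesis that the stratification be Whitney cannot be dropped.
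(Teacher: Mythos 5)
The paper does not actually prove this theorem: it states it as a classical result of stratification theory, quoting \cite[Theorem 3.2]{tro} (going back to Thom and Mather), and only uses its conclusion. Your outline --- control data built by downward induction on the skeleta, controlled lifts of a frame on the stratum and of the radial direction, integration of these lifts via the first isotopy lemma, and identification of the normal slice with the cone ${\rm c}(L)$ over the link --- is precisely the canonical Thom--Mather argument contained in the cited references, so your proposal is correct and coincides with the proof the paper implicitly relies on.
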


We also assume, without loss of generality, that $\oli{G}\cap Y=\oli{G}\cap Z_{d-k}$, so that, in particular, $\oli{G}$ intersects only one stratum of $Z_{d-k}$.

\medskip

We apply this theory to the analytic set $\widetilde{M}$; we have a stratification 
$$\widetilde{M}=Z_3\supset Z_2\supset Z_1\supset Z_0$$
where $S\subseteqq Z_2$. As $\widetilde{M}$ is compact, the number of strata is finite.

\begin{propos}\label{Ero}Let $Y\subseteqq Z_2\setminus Z_1$ be a $2$-dimensional stratum which is contained in $S$. For every point $p\in Y$ there exists a neighbourhood $V_p$ such that
$$\Gamma(V_p\cap W, \Chi)\neq\emptyset\;.$$
\end{propos}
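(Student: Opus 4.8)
The plan is to produce the desired section by choosing the neighbourhood $V_p$ so that the $\mathbb{Z}/2$-monodromy of the sheaf $\Chi$ around $Y$ is trivial, and to reduce this triviality to a parity count of the local half-branches of $\widetilde{M}$ transverse to $Y$. First I would apply the Thom--Mather Theorem \ref{tub} to the stratum $Y$ at the point $p$. Since $Y$ is $2$-dimensional in the $4$-real-dimensional manifold $X$, we have $k=\rcodim Y=2$, so the link $L$ is a finite subset of $S^1$ and the theorem yields a tubular neighbourhood $G$ of $p$ together with a stratified homeomorphism $h:(G,G\cap\widetilde{M},G\cap Y)\to (G\cap Y)\times(B^2,{\rm c}(L),O_2)$. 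Shrinking $G$ (using the normalization $\oli{G}\cap Y=\oli{G}\cap Z_2$ and local finiteness), I may assume that $G$ meets no stratum of $Z_2$ other than $Y$ and no lower stratum, so that $S\cap G=Y\cap G$ and hence $G\cap W=G\setminus Y\cong (G\cap Y)\times(B^2\setminus\{O_2\})$, which is homotopy equivalent to $S^1$. Writing $m=|L|$, the cone ${\rm c}(L)$ is a union of $m$ radii whose removal splits the transverse punctured disc into $m$ sectors, each corresponding to the germ along $Y$ of a cell, while the $m$ radii correspond to the $m$ faces of $\widetilde{M}$ abutting $Y$.

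Next I would describe a section of $\Chi$ over $G\cap W$ as a coherent choice of sign. On each sector one has $\alpha>0$, and by Proposition \ref{Ermia} and Corollary \ref{Ippolita} a section of $\Chi$ there is, up to sign, a fixed pluriharmonic function $\chi$ with $\alpha=(\lambda\circ\chi)^{2k}$; equivalently it is determined by a single-valued continuous branch of $\beta=\pm\alpha^{1/2k}$, which is positive on one side of each adjacent face and negative on the other. Crossing one of the $m$ faces forces $\beta$ to change sign; hence a single-valued $\beta$ (equivalently a section of $\Chi$) exists on the loop around $Y$ if and only if the number $m$ of sign changes is even, i.e.\ the monodromy of $\Chi$ around $S^1$ equals $(-1)^m$.

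It remains to prove that $m$ is even. Choose real-analytic coordinates in which $Y=\{x_3=x_4=0\}$ and set $D=\{x_1=x_2=0\}$, a $2$-plane transverse to $Y$ at $p$; then $\widetilde{M}\cap D=\{\alpha(0,0,x_3,x_4)=0\}$ is a $1$-dimensional real-analytic curve germ at $p$ whose number of half-branches equals the topological invariant $|L|=m$. I would then argue that any $1$-dimensional real-analytic curve germ has an even number of half-branches: it is a finite union of irreducible germs, and each irreducible germ admits a generically injective real-analytic normalization $\gamma:(-\delta,\delta)\to X$ with $\gamma(0)=p$, so that its two ends $\gamma((-\delta,0))$ and $\gamma((0,\delta))$ are two distinct half-branches. (A single ``half-ray'' branch is impossible: if $\alpha$ vanished on a real-analytic arc $\gamma([0,\delta))$ then, $\alpha\circ\gamma$ being real-analytic and identically zero on $[0,\delta)$, it would vanish on all of $(-\delta,\delta)$, producing the opposite end as well.) Hence $m$ is even, the monodromy is trivial, and a single-valued sign choice defines a section $\sigma\in\Gamma(G\cap W,\Chi)$; taking $V_p=G$ finishes the proof.

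The hardest and most delicate step is the evenness of $m$ together with the precise identification of the $\Chi$-monodromy with the parity of the number of faces abutting $Y$: one must match the normalized germs of Corollary \ref{Ippolita} across consecutive sectors and faces and verify that each face contributes exactly one sign change, and one must rule out the ``odd'' local configurations of the real-analytic set $\widetilde{M}$, which is precisely where the generic injectivity of the normalization of a real curve germ enters. I note also that this local triviality is consistent with the global statement of Theorem \ref{maint}: the possible nontrivial monodromy forcing the passage to a double cover $X^\ast$ cannot come from a single $2$-stratum $Y$ (where $m$ is even), but only from loops in the global combinatorics of cells and faces.
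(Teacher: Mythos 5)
Your first half is the paper's own proof in different words: the paper likewise applies Theorem \ref{tub} at $p$ (so that $L$ is a finite set of $m$ points), identifies the sectors $E_j=h^{-1}(G_0\times S_j)$ with pieces of cells and the radii $N_j=h^{-1}(G_0\times T_j)$ with pieces of faces, endows each $V_j=E_j\cup N_{j+1}\cup E_{j+1}$ with the normalized function $\chi_j$ of Corollary \ref{Ippolita}, notes that $\chi_j\vert_{E_{j+1}}=-\chi_{j+1}\vert_{E_{j+1}}$, and observes that the alternating gluing $(-1)^{j-1}\chi_j$ closes up precisely when $m$ is even --- your statement that the monodromy equals $(-1)^m$. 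The genuine divergence is in the proof that $m$ is even. The paper cites Sullivan's theorem \cite{sul} (links of real analytic spaces have even Euler characteristic) and computes combinatorially that the Euler characteristic of the link of $\widetilde{M}$ at $p$ is exactly $m$. You instead slice by a transverse $2$-plane $D$ and reduce to the evenness of the number of half-branches of a real analytic curve germ, proved via parametrization; in effect you prove Sullivan's theorem in dimension one by hand. This is an attractive, more elementary endgame, but as written it has a genuine gap: the assertion that the number of half-branches of $\widetilde{M}\cap D$ ``equals the topological invariant $|L|=m$'' is precisely the statement that a transverse linear slice realizes the Thom--Mather link, a theorem of stratification theory (normal-slice/link invariance, cf.\ Goresky--MacPherson) of essentially the same depth as the Sullivan citation it replaces; moreover it silently uses that $D$ is transverse not only to $Y$ but to all the $3$-dimensional strata near $p$ (true by Whitney's condition $(a)$, but needed even to know the slice is $1$-dimensional rather than $2$-dimensional). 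Finally, your parenthetical half-ray argument does not by itself establish the existence and generic injectivity of the parametrization $\gamma$; that is the classical structure theorem (real Puiseux) for real analytic curve germs and must be invoked.

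The gap is repairable from inside your own monodromy framework, without link invariance. The circle $\lambda'=D\cap{\rm b}B_\epsilon(p)$ bounds the disc $D\cap B_\epsilon(p)$, which meets $Y$ exactly once and transversally, so $\lambda'$ represents the generator of $\pi_1(G\cap W)\cong\Z$ (equivalently, it has mod $2$ linking number $1$ with $Y$); by homotopy invariance its $\Chi$-monodromy is therefore $(-1)^m$. On the other hand, by the curve structure theorem each half-branch of $\widetilde{M}\cap D$ crosses $\lambda'$ exactly once and transversally for small generic $\epsilon$ (the analytic function $t\mapsto\vert\gamma(t)-p\vert^2$ is eventually strictly monotone on each half-branch), and each crossing flips the sign of the local determination by Corollary \ref{Ippolita}; hence the monodromy also equals $(-1)^N$, where $N$ is the number of half-branches. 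Your evenness argument for $N$ then gives $(-1)^m=1$ with no need to prove $N=m$ at all. With these repairs your proof is correct and constitutes a genuinely different route to the evenness of $m$ than the paper's Euler-characteristic computation; your closing remark about the global origin of the double cover in Theorem \ref{maint} is also accurate.
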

\begin{proof} We apply Theorem \ref{tub} to $p\in Y$; we note that, in this case, $d=3$, $k=2$, so $L$ will be a (compact) link in $S^1$, i.e. a finite number $m$ of points. 

The cone over $L$ in $B^2$ is the union of $m$ radii $T_1,\ldots, T_m$, connecting the points of $L$ to the origin and dividing $B^2$ in $m$ connected open sectors $S_1,\ldots, S_m$, indexed so that $T_i, T_{i+1}\subset\oli{S}_i$, with $T_{m+1}=T_1$.

Let $G$ be the neighbourhood given by Theorem \ref{tub} and $h$ the homeomorphism of stratified spaces; we assume that $G\cap Y=G_0$ is a topological disc. 

The sets $N_j:=h^{-1}(G_0\times T_j)$, for $j=1,\ldots, m$, are open connected subsets of $Z_3\setminus Z_2$, i.e. of the faces in $\widetilde{M}$; likewise, $E_j:=h^{-1}(G_0\times S_j)$ are open connected subsets of $X\setminus Z_3=X\setminus\widetilde{M}$, i.e. of the cells.

We define, for $j=1, \ldots, m$, 
$$V_j=E_j\cup N_{j+1}\cup E_{j+1}\;,$$ 
where $N_{m+1}=N_1$ and $E_{m+1}=E_1$; we apply Corollary \ref{Ippolita} to each $V_j$, with $V_j^+=E_j$, obtaining a (unique) pluriharmonic function $\chi_j$.

Obviously, by the uniqueness of $\chi_j$, we have that 
$$\chi_j\vert_{E_{j+1}}=-\chi_{j+1}\vert_{E_{j+1}}\;,$$ 
for each $j=1,\ldots, m$.

Therefore, if $m$ is even, $\chi_1$ and $(-1)^{m-1}\chi_m=-\chi_m$ have the same sign on $E_1$, hence coincide. So we can glue together the functions $(-1)^{j-1}\chi_j$, for $j=1,\ldots, m$ into a pluriharmonic function 
$$\chi:\bigcup_{j=1}^m V_j\to\R$$
which is obviously a section of $\Chi$ on
$$\bigcup_{j=1}^mV_j=W\cap G\;.$$
Setting $V_p=G$ proves the thesis. 

In order to end the proof we have to show that $m$ is even. This fact is a consequence of the following Sullivan's Theorem on the local Euler characteristic of an analytic variety (cfr. \cite[Corollary 2]{sul}): a real analytic space is locally homeomorphic to the cone over a polyedron with even Euler characteristic.

Indeed, represent the neighborhood $G\cap Z$ of $y_0$ in $Z$ as an open cone over some polyhedron $K$ and compute $\uli\chi(K)$, the Euler characteristic of $K$. $G\cap Z=G_0\times{\rm c}(L_0)$, where $G_0$ is topologically a disk. It is clear that $G\cap Z$ is topologically an open cone with vertex at $(y_0,0)$ over its relative boundary in $Z$, denoted $K$. We consider $G_0$ as a single (open) triangle $(ABC)=\D$. Then $G_0\times{\rm c}(L_0)$ is the union of $m$ prisms $\D\times(O,P_i)\simeq\D\times(0,1)$ and of a single simplex $\D\{O\}$, which is the common boundary of all prisms (and so belongs to the open set $G_0\times{\rm c}(L_0)$). Compute now the Euler characteristic $\uli\chi(K)=c_0-c_1+c_2$, where $c_i$ is the number of $i$-dimensional simplexes in $K$.

$$
c_0=3m+3
$$
(vertices $(A,P_i)$, $(B,P_i)$, $(C,P_i)$, $i=0,1,\ldots,m-1$, and $(A,O)$, $(B,O)$, $(C,O)$).

We divide each of the $3$ side squares of the $m$-prisms into two simplexes in whatever way so
\begin{eqnarray*}
c_1=&3m& {\rm (from\, the\,\, tops\,\, of\,\,prisms)}\\
&+& 2\cdot3\cdot m\,\,{\rm (from\,\, the\,\, non\,\,horizontal\,\, side\,\, square)}\\
&+& 3\,\,{\rm (from\,\, the\,\,common\,\,base)}\\
&=& 9m+3.   
\end{eqnarray*}
Finally
\begin{eqnarray*}
c_2=&m& {\rm (triangles\,\,from\, the\,\, tops\,\, of\,\, prisms)}\\
&+& 3\cdot2\cdot m\,\,{\rm (non\,\,horizontal,\,\,from\,\, the\,\, side\,\, squares)}\\
&+& 0\,\,{\rm (none\,\,from\,\, the\,\,base)}\\
&=& 7m   
\end{eqnarray*}
so 
$$
\uli\chi(K)=3m+3-(9m+3)+7m=m.
$$
The proof of Proposition \ref{Ero} is now complete.
\end{proof}

\begin{propos}\label{Orsola}The sheaf $\Chi$ extends to the whole $X$.\end{propos}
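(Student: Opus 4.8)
The plan is to extend $\Chi$ across $S=X\setminus W$ by downward induction on the dimension of the strata of the Whitney stratification $\widetilde M=Z_3\supset Z_2\supset Z_1\supset Z_0$, recalling that $S\subseteq Z_2$ while the $3$--dimensional faces (the top stratum $Z_3\setminus Z_2$) already lie in $W$. By Remark \ref{Oberon} it suffices, for each $p\in S$, to produce a neighbourhood $G$ over which the part of $\Chi$ built so far admits a section: such a section extends continuously across $S\cap G$ (put it equal to $0$ on $S$), is pluriharmonic there by Lemma \ref{Silla}, and yields the two sign--conjugate germs $\pm\widetilde\chi$; the uniqueness up to sign in Corollary \ref{Ippolita} makes these local extensions agree on overlaps, so they glue into a sheaf on $X$. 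The base case, $p$ in a $2$--dimensional stratum, is exactly Proposition \ref{Ero}.

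For the inductive step I would fix a stratum $Y\subseteq Z_j\setminus Z_{j-1}$ with $j\in\{0,1\}$ and a point $p\in Y$, and shrink the neighbourhood $G$ of Theorem \ref{tub} so that $\oli G$ meets no stratum of dimension $<j$ and $G\cap Z_j=G\cap Y$; as in Proposition \ref{Ero} we may take $G\cap Y$ to be a disc. Then $G\cong(G\cap Y)\times B^k$ with $k=\rcodim Y=4-j\ge 3$ and $G\cap\widetilde M\cong(G\cap Y)\times{\rm c}(L)$, where the link $L\subset S^{k-1}$ is a $(2-j)$--dimensional Whitney--stratified set: its top strata correspond to the faces adjacent to $Y$, its lower strata to the strata of $\widetilde M$ of dimension $>j$ adjacent to $Y$, and the complementary regions $S^{k-1}\setminus L$ to the cells. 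The point that needs the most care is that, by the inductive hypothesis, $\Chi$ has already been extended over all these strata of dimension $>j$, i.e.\ over \emph{every} stratum of $L$; combined with its values on cells and faces, this shows that in the trivialisation of Theorem \ref{tub} the already--constructed part of $\Chi$ is a locally constant sheaf with two--element, sign--conjugate stalks defined on the \emph{entire} slice sphere $S^{k-1}$, i.e.\ a $\Z/2$--local system (and not merely one on a punctured sphere).

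Since $G\cap Y$ is contractible and $B^k\setminus\{O_k\}\simeq S^{k-1}$, the punctured neighbourhood $G\setminus Y$ is homotopy equivalent to $S^{k-1}$, so the existence of a section of $\Chi$ over $G\setminus Y$ is controlled by the monodromy $\pi_1(S^{k-1})\to\Z/2$. This is the decisive simplification: $j\le 1$ forces $k-1\ge 2$, so $S^{k-1}$ is simply connected, the monodromy is automatically trivial, and $\Chi$ acquires (exactly two, $\pm$) global sections over $G\setminus Y$; extending one of them across $Y$ as above closes the induction and yields $\Chi$ on all of $X$. Conceptually, the only genuinely delicate case is the one already settled in Proposition \ref{Ero}, where the slice is the circle $S^1$ and triviality of the monodromy required the even--parity computation via Sullivan's theorem; its role here is precisely to fill in, at the previous inductive stage, the low--dimensional strata of $L$ that would otherwise puncture the higher slice spheres and reintroduce a nontrivial fundamental group.
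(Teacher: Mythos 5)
Your proposal is correct and takes essentially the same route as the paper: the $2$--dimensional strata are handled by Proposition \ref{Ero} (Sullivan's parity argument), and the strata of dimension $\le 1$ are handled via the Thom--Mather trivialisation, where the punctured neighbourhood $G\setminus Y\simeq S^{k-1}$ with $k\ge 3$ is simply connected, so the two-valued sign-ambiguous sheaf admits a section that is then extended across the stratum by Remark \ref{Oberon} and Lemma \ref{Silla}. The only cosmetic difference is that you phrase the simple-connectivity step as triviality of a $\Z/2$ monodromy representation and treat $j=1$ and $j=0$ uniformly, whereas the paper treats the $0$--dimensional strata with a plain punctured ball.
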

\begin{proof} By Proposition \ref{Ero} and Remark \ref{Oberon}, we can extend the sheaf $\Chi$ to $X\setminus Z_1$.

Suppose now that $p\in Z_1\setminus Z_0$ and denote by $\tau$ the $1$-dimensional stratum with $p\in\tau$. By Theorem \ref{tub}, we have a neighbourhood $G$ and a homeomorphism $h$ such that $h(G\setminus \tau)=(G\cap \tau)\times (B^3\setminus \{O_3\})$.
Therefore, $G\cap (X\setminus Z_1)=G\setminus\tau$ is simply connected, which is enough to say that
$$\Gamma(G\cap (X\setminus Z_1), \Chi)\neq \emptyset\;.$$

Therefore, by Remark \ref{Oberon}, we can extend $\Chi$ on $X\setminus Z_0$. Let $p\in Z_0$ and consider a topological ball $B$ around $p$. Obviously, $B\setminus\{p\}$ is simply connected, hence we can repeat the previous argument and show that
$$\Gamma(G\cap(X\setminus Z_0), \Chi)\neq \emptyset\;.$$

Therefore, we obtain an extension of $\Chi$ to the whole $X$.\end{proof}

We are now in the position to prove Theorem \ref{maint}.

\medskip

\begin{proof}[Proof of Theorem \ref{maint}.] Let the extension of $\Chi$ to $X$, given by Proposition \ref{Orsola}, be denoted again by $\Chi$. Let $X^*$ be the total space of $\Chi$, together with the projection $\pi:X^*\to X$, which sends the germ $(h)_x\in \Chi_x$ to $x$, for every $x\in X$.

We give $X^*$ the natural complex structure, so that $\pi$ is a $2$-to-$1$ locally trivial holomorphic covering map, open and proper. We define the function
$$\chi^*:X^*\to\R$$
by $\chi^*((h)_x)=h(x)$, for $(h)_x\in \Chi_x$. It is easy to check that $\chi^*$ is a proper pluriharmonic function on $X^*$.

It may happen that $X^*$ is disconnected, then each of the two connected components is biholomorphic to $X$ and $\chi^*$ descends to a proper pluriharmonic function $\chi:X\to\R$.\end{proof}

\begin{rem} Together with Theorem \ref{SCOTTISH}, the previous result proves the second part of the Structure Theorem; from the existence of a global pluriharmonic function defining the foliation, we deduce that such a foliation is indeed holomorphic. Hence, both case ii and case iii of the Structure Theorem are examples of holomorphic foliations; from such observation, we can proceed as Brunella did in \cite{Bru}, noticing that there are examples of weakly complete surfaces which do not admit holomorphic foliations, but for which $\Sigma^1=X$, and concluding that such surfaces do not admit real analytic plurisubharmonic exhaustion functions.\end{rem}

\section{Appendix.}\label{app}

\noindent As a consequence of the Hopf Lemma in a weak form (cfr. \cite[Lemma 3.8]{ST}) we have the following
\begin{propos}\label{MARL} 
Let $W$ be a complex surface and $\beta:W\to \R$ a real analytic plurisubharmonic function. Then every $3$-dimensional connected component of ${\rm Crt}(\beta)\smallsetminus {\rm Sing}({\rm Crt}(\beta))$ consists of local minimum points of $\beta$ and so it is contained in $\widetilde{M}$. ${\rm Crt}(\beta)\smallsetminus{\widetilde{M}}$ is a real analytic subset of $W$ and ${\rm dim}_{\R}\,{\rm Crt}(\beta)\smallsetminus\widetilde{M}\le 2$.
\end{propos}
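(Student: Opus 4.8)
The plan is to establish the three assertions in turn; the heart of the matter is the local-minimum property of the top-dimensional regular strata, after which the containment in $\widetilde{M}$ is essentially definitional and the dimension statement follows from the structure of real analytic sets.

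\smallskip

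\emph{Local minimum via Hopf.} Let $M$ be a $3$-dimensional connected component of $\Crt(\beta)\setminus{\rm Sing}(\Crt(\beta))$. Since $M\subseteq\Crt(\beta)$ we have $d\beta\equiv 0$ along $M$, and as $M$ is connected $\beta$ is constant on it, say $\beta|_M\equiv A$. Being $3$-dimensional, $M$ is a smooth real hypersurface of the $4$-real-dimensional manifold $W$, so each $p\in M$ has a neighbourhood which $M$ separates into two sides $V^+,V^-$. Recall that a plurisubharmonic function is subharmonic for the underlying Laplacian. Suppose, for contradiction, that some $p\in M$ is not a local minimum of $\beta$; then $\beta$ takes values $<A$ arbitrarily near $p$, and after relabelling there is one side, say $V^-$, on which this happens. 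Choosing a connected component $\Omega$ of $\{\beta<A\}$ meeting $V^-$ with $p\in{\rm b}\Omega$, we have $\beta\le A=\beta(p)$ on $\overline\Omega$, so $p$ is a boundary maximum point, and since $M$ is smooth $\Omega$ satisfies an interior ball condition at $p$ from the $V^-$ side. The weak Hopf Lemma (cfr. \cite[Lemma 3.8]{ST}) then forces $d\beta(p)\neq 0$, contradicting $p\in M\subseteq\Crt(\beta)$. Hence every point of $M$ is a local minimum of $\beta$. As $M$ is by assumption a $3$-dimensional component of the regular part of $\Crt(\beta)$, it is one of the $M^i$, and by the very definition $\widetilde{M}=\bigcup_i\overline{M}^i$ we get $M\subseteq\widetilde{M}$.

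\smallskip

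\emph{Dimension and analyticity.} We may assume $\beta$ nonconstant (otherwise $\Crt(\beta)=W$ and there is nothing to prove), so that $\Crt(\beta)=\{d\beta=0\}$ is a real analytic subset of $W$ of dimension $\le 3$. Its regular part is dense in it and, by local finiteness of the analytic stratification (cfr. \cite{tro,N}), the family $\{M^i\}_i$ of its $3$-dimensional components is locally finite; hence $\widetilde{M}=\bigcup_i\overline{M}^i=\overline{\bigcup_i M^i}$ is closed. Using the upper semicontinuity of $x\mapsto\dim_x\Crt(\beta)$ together with the density of regular points, $\widetilde{M}$ is exactly the set of points at which $\Crt(\beta)$ has local dimension $3$, and this top-dimensional part is again a real analytic set. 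Consequently $\Crt(\beta)\setminus\widetilde{M}$ is precisely the set of $x\in\Crt(\beta)$ with $\dim_x\Crt(\beta)\le 2$: it is a real analytic subset of the open set $W\setminus\widetilde{M}$ and $\rdim\big(\Crt(\beta)\setminus\widetilde{M}\big)\le 2$.

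\smallskip

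The principal difficulty is the analytic Hopf step: one must check that the one-sided domain $\Omega$ is regular enough at the boundary maximum $p$ (an interior sphere suffices, and is guaranteed by the smoothness of the hypersurface $M$), and that subharmonicity does force a nonvanishing outward derivative there, so that $p$ cannot be critical. On the structural side, the remaining care lies in invoking the real-analytic geometry that makes the top-dimensional part $\widetilde{M}$ a closed analytic set and its complement in $\Crt(\beta)$ of dimension at most $2$; these facts are standard but, as the authors note for the whole appendix, deserve to be spelled out.
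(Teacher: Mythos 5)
Your overall strategy---Hopf's lemma applied to a sublevel-set component touching $M$---is exactly the route the paper indicates (it states this proposition without proof, ``as a consequence of the Hopf Lemma in a weak form''), but your execution has a genuine gap at the decisive step. You apply the Hopf lemma at $p$ to a component $\Omega$ of $\{\beta<A\}$, asserting that ``since $M$ is smooth, $\Omega$ satisfies an interior ball condition at $p$.'' This is a non sequitur: the boundary of $\Omega$ near $p$ is not $M$ but a piece of the level set $\{\beta=A\}$, which may be strictly larger than $M$, and the smoothness of $M$ says nothing about the shape of $\Omega$. All you know is that $\beta<A$ at \emph{some} points arbitrarily close to $p$; a priori $\{\beta<A\}$ could approach $p$ through a cusp pinched between two sheets of $\{\beta=A\}$ tangent to $M$ at $p$. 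For instance, for the (real analytic, non-plurisubharmonic) function $\beta-A=x_4^2\left[(x_4-x_1^2)^2-x_1^8\right]$ one has $M=\{x_4=0\}\subset\Crt(\beta)$, $\beta\vert_M=A$, $M$ perfectly smooth, and yet $\{\beta<A\}=\{0<|x_4-x_1^2|<x_1^4\}$ accumulates at $p=0$ while containing no ball with $p$ on its boundary; so your justification of the interior ball condition fails, and indeed $p$ is not a local minimum there. Ruling out precisely this cuspidal picture is the actual content of the proposition, so assuming the tangent ball exists begs the question. (A smaller point: the existence of a single component $\Omega$ with $p\in{\rm b}\Omega$ also needs a word, e.g.\ local finiteness of the connected components of semianalytic sets.)

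The repair uses the tools the paper itself supplies in this appendix. Writing $\beta-A=\sum_{k\ge N}a_k(x')x_4^k$ in adapted coordinates, with $a_N\not\equiv 0$, Proposition \ref{GLO2} gives a proper analytic subset $T=\{a_N=0\}$ of $M$, of dimension $\le 2$, off which $\beta-A=x_4^N u$ with $u\neq 0$; at a point of $M\setminus T$ which is not a local minimum the leading term forces an entire one-sided collar $\{0<\pm x_4<\epsilon\}$ to lie in $\{\beta<A\}$, and \emph{there} the interior tangent ball does exist, so the weak Hopf lemma \cite[Lemma 3.8]{ST} yields $d\beta\neq 0$, a contradiction; this forces $N$ even and $a_N>0$, i.e.\ local minimality, but only on the dense subset $M\setminus T$. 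Extending the conclusion to the points of $T$ (your statement, like the paper's, claims it for \emph{every} point of the component) requires a further argument that your proof does not contain, and which naive Hopf does not give: the expanding-ball variant only produces boundary points $z$ of $\Omega$ with $d\beta(z)\neq 0$, and such $z$ need not lie on $M$, so no contradiction results. Finally, in the structural half, your intermediate claim that the closure of the top-dimensional part of the regular locus ``is again a real analytic set'' is false in general in the real analytic category (Cartan--Whitney umbrella); fortunately it is not needed, since $\Crt(\beta)\cap(W\setminus\widetilde{M})$ is automatically an analytic subset of the open set $W\setminus\widetilde{M}$, and the identification of $\widetilde{M}$ with the locus where $\Crt(\beta)$ has local dimension $3$ (via a Whitney stratification and the frontier condition) already gives $\rdim\le 2$ for the complement.
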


The following ``good covering lemma" is probably well known, so we just give a brief idea of the proof.
\begin{lemma}\label{gocov}
Let $W$ be a $2$-dimensional complex manifold and $C$ a smooth compact complex curve in $W$. Let $U$ be a neighborhood of $C$. Then there is a finite covering $\{V_j\}_j^m$ of $C$ by open subsets of $W$, such that
\begin{itemize}
\item[i)] $C\subset\bigcup\limits_{j=1}^mV_j\subset U$;
\item[ii)] every $V_j$ is simply connected;
\item[ijj)] $V_j\cap C$ is connected for $j=1, 2,\ldots,m$;
\item[iv)] whenever $V_j\cap B_k\neq\varnothing$, then $V_j\cap V_k\cap C\neq\varnothing$ and both sets $V_j\cap V_k$ and $V_j\cap V_k\cap C$ are connected, $1\le j,k\le m$. 
\end{itemize}
\end{lemma}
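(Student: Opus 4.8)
The plan is to reduce the statement to the existence of a \emph{good cover} of the compact Riemann surface $C$ and then to ``spread it out'' into $W$ by means of a tubular neighbourhood. The point is that all four requirements are essentially two--dimensional conditions on $C$ together with a fibre condition, so once $C$ is covered by geodesically convex sets with connected pairwise intersections, the corresponding saturated sets in a disk bundle over $C$ will automatically satisfy i)--iv).

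First I would invoke the tubular neighbourhood theorem for the compact submanifold $C\subset W$: since $\rcodim C=2$, there is an open neighbourhood $T$ of $C$ and a smooth retraction $\pi:T\to C$ exhibiting $T$ as a real $2$--disk bundle over $C$, with $\pi|_C=\mathrm{id}_C$. Shrinking the bundle radially by a positive function on the compact base $C$, I may assume $T\subset U$. Next, fixing a Riemannian metric on $C$, I would choose, by Whitehead's theorem on the convexity radius, a finite cover $\{D_j\}_{j=1}^m$ of $C$ by geodesically (strongly) convex open balls, taken small enough that every nonempty intersection $D_j\cap D_k$ is again convex, hence connected and contractible; this is precisely a good cover of $C$. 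Finally I set
$$V_j:=\pi^{-1}(D_j),\qquad j=1,\dots,m.$$

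It then remains to verify i)--iv) from the bundle structure. Since $\pi|_C=\mathrm{id}$ we have $V_j\cap C=D_j$, whence $C\subset\bigcup_j V_j\subset T\subset U$ (property i)) and $V_j\cap C=D_j$ is connected (property iii)). As $D_j$ is contractible, the restricted disk bundle $V_j=\pi^{-1}(D_j)$ is trivial, hence contractible and in particular simply connected (property ii)). For iv), note that $V_j\cap V_k=\pi^{-1}(D_j\cap D_k)$, which is nonempty exactly when $D_j\cap D_k\neq\varnothing$; in that case $V_j\cap V_k\cap C=D_j\cap D_k\neq\varnothing$, and both $D_j\cap D_k$ (by the good cover) and its preimage (a disk bundle with connected fibres over a connected base) are connected. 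The only genuine content is the construction of the convex good cover of $C$, which is classical; the rest is bookkeeping with the disk bundle $\pi$, so I expect no serious obstacle beyond making the radial shrinking of $T$ uniform, which is immediate by compactness of $C$.
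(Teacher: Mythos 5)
Your proof is correct and is essentially the paper's own argument: take a tubular neighbourhood of $C$ contained in $U$, cover $C$ by finitely many geodesically convex balls (a good cover) for some Riemannian metric, and let the $V_j$ be the preimages under the bundle projection. The only difference is cosmetic and in your favour: the paper asserts the tubular neighbourhood is homeomorphic to the product $C\times\Delta$ (which can fail when the normal bundle of $C$ is topologically nontrivial), whereas you only use triviality of the disk bundle over each contractible $D_j$, which is all the verification of i)--iv) actually requires.
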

\begin{proof} Being smooth, $C$ possesses a tubular neighbourhood in $W$ which is homeomorphic to the product $C\times\Delta$, where $\Delta$ is the unit disc. It is now enough to cover $C$ with finitely many connected, simply connected open sets $\{\Omega_j\}_{j=1}^n$ such that their pairwise intersections are either empty or connected, e.g. convex balls for some Riemannian metric on $C$; the desired covering will be the preimage in the tubular neighbourhood of the products $\{\Omega_j\times\Delta\}_{j=1}^n$. \end{proof}
\subsection{Flatness}
Let $M$ be a $C^\infty$-smooth hypersurface in a differentiable manifold $W$ and $\beta$ a $C^\infty$-smooth function
defined in a neighbourhood of $M$. We recall that $\beta$ is said {\em flat} of order $N$
along $M$ in $X$, if
\begin{itemize}
\item[{\rm i)}] $\beta_{|M}=C$ (constant);\\
\item[{\rm ii)}] $\frac{\p^{k}\beta}{\p\nu^{k}}(p)= 0$, for $1\le k\le N$ and $\frac{\p^{N+1}\beta}{\p\nu^{N+1}}(p)\neq 0$, for every $p\in M$ ($\p/\p\nu$ normal derivative).
\end{itemize}
The normal derivative in question can be taken with respect to any Riemannian metric on $M$, and the property does not depend on the choice. If $x_1,\ldots,x_n$ are local coordinates on a neighbourhood $V$ of $p_0$ in $W$ chosen in such a way that $x_n\vert_{V\cap M}=0$, then $\beta$ is flat of order $N$ if and only if $\beta_{|V\cap M}$ is constant, $\frac{\p^{k}\beta}{\p x_n^{k}}(p)= 0$, for $1\le k\le N$ and $\frac{\p^{N+1}\beta}{\p x_n^{N+1}}(p)\neq 0$, $p\in V\cap M$.
\begin{propos}\label{GLO2}. Let $M$ be a real analytic hypersurface in a real analytic manifold $W$,and $\alpha$ a real analytic function defined in a neighbourhood of $M$ such that $\alpha$ has local minimum at each point of $M$. Then there is an integer $k\ge 1$ and a real analytic subset $T\subset M$, such that $\alpha$ is flat of order $2k-1$ along $M\smallsetminus T$.
\end{propos}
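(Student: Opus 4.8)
The plan is to reduce everything to the behaviour of $\alpha$ in the direction transverse to $M$ and to control the order of vanishing by analytic equations. We may assume $M$ connected (otherwise we argue on each component, which is all the applications require). First I would record two elementary consequences of the hypothesis: since $\alpha$ attains a local minimum at each point of $M$, every $p\in M$ is a critical point of $\alpha$ in $W$, so $\p\alpha(p)=0$; moreover $\alpha_{|M}$ has a local minimum at each of its points, hence is locally constant and, $M$ being connected, constant, so we may assume $\alpha_{|M}\equiv 0$. Fixing $p_0\in M$ I would choose real analytic coordinates $(x,t)=(x_1,\dots,x_{n-1},t)$ with $M=\{t=0\}$ and expand
\begin{equation*}
\alpha(x,t)=\sum_{j\ge 1}a_j(x)\,t^{j},\qquad a_j(x)=\tfrac{1}{j!}\tfrac{\p^{j}\alpha}{\p t^{j}}(x,0),
\end{equation*}
where each $a_j$ is real analytic on $M$ and $a_1\equiv 0$ because $d\alpha=0$ on $M$. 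For $x\in M$ set $\nu(x)=\min\{j\ge 1:a_j(x)\neq 0\}\in\{2,3,\dots\}\cup\{\infty\}$; this is exactly one plus the order of flatness of $\alpha$ at $(x,0)$, and since the flatness order is independent of the coordinates and of the chosen normal, $\nu$ is an intrinsically defined function on $M$.

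The second step is to show that $\nu$ takes only even values where finite. For fixed $x$ the function $t\mapsto\alpha(x,t)$ has a local minimum at $t=0$ of value $0$, so $\alpha(x,t)\ge 0$ for $|t|$ small; if $\nu(x)=m<\infty$ then $\alpha(x,t)=a_m(x)\,t^{m}\big(1+o(1)\big)$ near $t=0$, and a nonnegative leading term on both sides of $0$ forces $m$ even (and $a_m(x)>0$). Hence $\nu(x)$ is even whenever finite.

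The heart of the argument is then a Noetherian/irreducibility argument on $M$. For each $m$ the set $S_m=\{x\in M:\nu(x)\ge m\}=\{a_1=\cdots=a_{m-1}=0\}$ is the common zero set of finitely many real analytic functions, hence a real analytic subset of $M$ (the local descriptions patch because the condition is intrinsic). By the evenness just proved $S_{2l+1}=S_{2l+2}$, so it suffices to study the decreasing chain $M=S_2\supseteq S_4\supseteq S_6\supseteq\cdots$. As $M$ is a connected real analytic manifold it is irreducible, so each $S_{2l}$ is either all of $M$ or a proper analytic subset of dimension $\le\rdim M-1=n-2$. If $\alpha$ were flat to infinite order on an open subset of $M$, then by analyticity in $t$ it would vanish identically near that open set and hence, $W$ being connected, be constant, which is excluded; therefore $\bigcap_l S_{2l}=\{\nu=\infty\}$ has empty interior and some member of the chain is proper. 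Let $2k$ be the least even integer with $S_{2k+2}\neq M$ and put $T=S_{2k+2}$. Then $T$ is a real analytic subset of $M$ with $\rdim T\le n-2$, while on $M\smallsetminus T$ one has $S_{2k}=M$ and $\nu<2k+2$, i.e. $\nu\equiv 2k$; equivalently $\tfrac{\p^{j}\alpha}{\p\nu^{j}}=0$ for $1\le j\le 2k-1$ and $\tfrac{\p^{2k}\alpha}{\p\nu^{2k}}\neq 0$ there, which is precisely flatness of order $2k-1$ along $M\smallsetminus T$.

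The main obstacle I expect is the bookkeeping needed to make $\nu$ (equivalently the sets $S_m$) genuinely intrinsic and globally analytic, rather than only locally defined through a particular coordinate or normal, together with excluding the degenerate infinite-order case; once these are in place, the evenness coming from the minimum condition and the irreducibility of $M$ make the constancy of the generic order on a dense open complement of a lower-dimensional analytic set essentially automatic.
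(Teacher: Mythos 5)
Your proposal is in substance the same as the paper's proof: expand $\alpha$ in powers of a coordinate transverse to $M$, use the local minimum condition to force the first non-vanishing order to be even, and take $T$ to be the zero locus of the corresponding coefficient. The paper does exactly this, locally and in sketch form, defining $N$ as the least $j$ with $a_j\not\equiv 0$ and setting $T=\{a_N=0\}$; your pointwise order $\nu$ and the descending chain $S_2\supseteq S_4\supseteq\cdots$ are just different bookkeeping for the same construction. (Your explicit treatment of the infinite-order case is a point the paper leaves implicit; note though that constancy of $\alpha$ is not actually excluded by the hypotheses, so in that degenerate case the statement holds only vacuously with $T=M$ --- an issue shared by the paper's own proof and irrelevant for its applications.)

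One claim you make is, however, false: the pointwise transverse order $\nu(x)$ is \emph{not} intrinsic. Take $W=\R^2$, $M=\{t=0\}$ and $\alpha(x,t)=t^2(t-x)^2$; this is nonnegative and vanishes on $M$, so it has a local minimum at every point of $M$. In the coordinates $(x,t)$ one computes $\alpha=x^2t^2-2xt^3+t^4$, so $\nu(0)=4$; but in the coordinates $y=x-t$, $s=t$, which again satisfy $M=\{s=0\}$, one has $\alpha=y^2s^2$, so $\nu(0)=\infty$. Accordingly the sets $S_m$ need not patch for general $m$: here $S_5=\varnothing$ in the first chart and $S_5=\{0\}$ in the second. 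What is coordinate-invariant is flatness of a fixed order along a relatively \emph{open} piece of $M$, not the order at a single point. Your proof nevertheless survives, because the only sets it actually uses are intrinsic: the condition $S_m=M$ (for $m\le 2k$) says precisely that $\alpha=t^m\rho$ with $\rho$ analytic, a condition stable under changes of coordinates preserving $M$ (since $t$ transforms into $s$ times a nonvanishing factor), and by your evenness observation the first proper set $S_{2k+2}$ equals $\{a_{2k}=0\}$, the zero set of the leading coefficient, which is again invariant. So either justify the patching along these lines, or simply note, as the paper does, that the statement is essentially local --- in the paper it is only ever applied to $M_p=V_p\cap M$ for a small neighbourhood $V_p$ --- and avoid the globalization altogether.
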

\begin{proof}
(Sketch). The fact being essentially local, we can assume without loss of generality that: $W$ is a
neighbourhood of zero in ${\R}^n$, $M=W\cap\{x_n=0\}$, $\alpha\ge c$ in $W$ and, since $\alpha$ has local minimum at each point of $M$, that $\alpha_{|M}=c$, $c\in\R$. Developping $\alpha$ in power series with respect to $x_n$ we obtain:
$$
\alpha(x',x_n)=c+\sum_{k=1}^{+\infty}\alpha_k(x')x_n^k,
$$
where $\alpha_s(x')$, real analytic function of $x'=(x_1,x_2,\ldots,x_k,\ldots, x_{n-1})$.
Let $N$ be the smallest $k$ such that $\alpha_k(x')$ is not identically $0$. Denote 
$$
T=\{(x',0)\in M:\alpha_k(x')=0\}.
$$
Then $T$ is a nowhere dense real-analytic subset of $M$. Observe that since
$\alpha-c\ge 0$ in $W$, $N$ cannot be odd and so $N=2k$, $k\ge 1$. It follows that
$\alpha$ is flat of order $2k-1$ along $M$.
\end{proof}
\begin{propos}\label{GLO3}. Let $W$ be a connected real analytic manifold and $M$ a closed real analytic 
hypersurface. Let $\beta:W\to [0,+\infty)$ be a non negative real analytic function on $W$, identically $0$ on $M$ and positive on $W\smallsetminus M$, flat of order $(2k-1)$ with $k\ge 1$, on $M\smallsetminus T$, where $T$ is a real analytic subset of $M$, nowhere dense in $M$. Assume that $M$ separates $W$ into two connected open sets $W^+$ and $W^-$. Let 
$$
\gamma(x)=\begin{cases}
0\>\>&if \>\>x\in M\smallsetminus T\\
\beta(x)^{1/2k}\>\>&if \>\>x\in W^+\\
-\beta(x)^{1/2k}\>\>&if \>\>x\in W^-.
\end{cases} 
$$
Then $\gamma:W\smallsetminus T\to\R$ is a real-analytic function without critical points on
$M\smallsetminus T$. Clearly $\beta=\gamma^{2k}$ on $W\smallsetminus T$.
\end{propos}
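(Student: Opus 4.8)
The plan is to prove both assertions by a purely local argument, since real analyticity and nonvanishing of the differential are local properties. The only delicate points are those of $M\smallsetminus T$; everywhere else the claim is immediate. First I would dispose of the generic locus: on the open set $W\smallsetminus M$ the function $\beta$ is strictly positive and real analytic, so its $2k$-th root $\beta^{1/2k}$ is real analytic there, and hence $\gamma=\pm\beta^{1/2k}$ is real analytic on $W\smallsetminus M$. Since $T\subset M$, this already covers all of $W\smallsetminus T$ away from $M$, and it remains only to examine a point $p\in M\smallsetminus T$.

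Near such a $p$ I would choose local real analytic coordinates $(x',x_n)$ exactly as in the proof of Proposition \ref{GLO2}, so that locally $M=\{x_n=0\}$, $W^+=\{x_n>0\}$ and $W^-=\{x_n<0\}$. The flatness hypothesis of order $2k-1$ means that the coefficients $\beta_1,\dots,\beta_{2k-1}$ of the convergent Taylor expansion of $\beta$ in $x_n$ vanish identically near $p$ (while $\beta_0=\beta|_M=0$), so that
$$\beta(x',x_n)=\sum_{j\ge 2k}\beta_j(x')x_n^j=x_n^{2k}\,g(x',x_n),$$
where $g(x',x_n)=\beta_{2k}(x')+\beta_{2k+1}(x')x_n+\cdots$ is real analytic. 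The crucial observation is that $g(x',0)=\beta_{2k}(x')$ is strictly positive on $M\smallsetminus T$: nonnegativity of $\beta$ forces $\beta_{2k}\ge 0$ (as $x_n^{2k}>0$ controls the sign of $\beta$ for small $x_n$), while exactness of the flatness order ($\p^{2k}\beta/\p x_n^{2k}\neq 0$) gives $\beta_{2k}\neq 0$ off $T$; hence $\beta_{2k}>0$, and by continuity $g>0$ on a whole neighborhood of $p$.

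Since $g>0$ near $p$, the root $g^{1/2k}$ is real analytic, and a direct check of signs in the three regions shows that, throughout the neighborhood,
$$\gamma(x',x_n)=x_n\,g(x',x_n)^{1/2k}.$$
Indeed on $W^+$ one has $\beta^{1/2k}=x_n g^{1/2k}$ with $x_n>0$; on $W^-$ one has $-\beta^{1/2k}=x_n g^{1/2k}$ using $|x_n|=-x_n$; and on $M$ both sides vanish. This expression is manifestly real analytic, establishing the first assertion. Evaluating the normal derivative gives
$$\frac{\p\gamma}{\p x_n}(x',0)=g(x',0)^{1/2k}=\beta_{2k}(x')^{1/2k}>0,$$
so $d\gamma\neq 0$ at every point of $M\smallsetminus T$, which is the no-critical-points claim. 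Finally $\beta=\gamma^{2k}$ holds by construction on $W^\pm$ (as $2k$ is even) and trivially on $M\smallsetminus T$.

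The main obstacle, and really the only point requiring care, is the positivity of the leading coefficient $\beta_{2k}$ on $M\smallsetminus T$ together with the consistent sign-matching across $M$ that collapses the three-branch definition into the single clean formula $\gamma=x_n\,g^{1/2k}$; once this factorization is in hand, both the real analyticity and the nonvanishing of the differential follow at once.
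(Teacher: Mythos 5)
Your proof is correct and follows essentially the same route as the paper's: both factor $\beta=x_n^{2k}\rho(x',x_n)$ in adapted local coordinates near a point of $M\smallsetminus T$, observe that $\rho>0$ there so that $\gamma=x_n\rho^{1/2k}$ is real analytic, and conclude via the nonvanishing normal derivative $\p\gamma/\p x_n(x',0)=\rho(x',0)^{1/2k}>0$. The only difference is that you spell out the positivity of the leading coefficient (nonnegativity of $\beta$ plus exactness of the flatness order) and the sign-matching across $M$, which the paper dismisses as clear.
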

\begin{proof}
Since the function $\beta$ is well defined everywhere in $W\smallsetminus T$ and is,
obviously, analytic on $W\smallsetminus M$ (note the assumption $\beta>0$ on $W\smallsetminus M$), it
is enough to verify its properties at points of $M$. Since they are local,
assume, without loss of generality, the setting and notation of the proof of last proposition. That is 
$p\in M\smallsetminus T$, $M\subset\{x_n=0\}\subset\R^n$,
$$
\beta(x',x_n)=\sum_{s=2k}^{+\infty} x_n^s\alpha_s(x')=x_n^{2k}\rho(x',x_n),
$$
for a $(x',x_n)\in V$, a small neighbourhood, where
$$
\rho(x',x_n)=\sum_{l=0}^{+\infty} x_n^l\alpha_{2k+l}(x').
$$
It is clear that $\rho$ is analytic and $\rho> 0$ in $V$ ($V$ intersects $M$),
and that $(\beta_{|V})(x)=x_n\rho(x)^{1/2k}$. Since $\rho(x)>0$, $\rho(x)^{1/2k}$ is real
analytic. In addition, $\p\beta/\p x_n(x',0)=\rho(x',0)^{1/2k}>0$.
\end{proof}  
\subsection{Local maximum sets.}\label{lms}
\begin{lemma}\label{Silla}
{\rm(Removable singularities)}. Let $W$ be a complex surface and $A$ a real analytic subset of dimension $\le 2$. Let $\chi:W\to\R$ be a continuous function such that $\chi_{|W\setminus A}$ is pluriharmonic. 

Then $\chi$ is pluriharmonic on $W.$
\end{lemma}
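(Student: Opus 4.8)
The plan is to prove the statement in three moves: reduce pluriharmonicity to ordinary harmonicity, remove the singular set $A$ for the (easier) harmonic problem, and then upgrade the resulting harmonic function back to a pluriharmonic one using real-analyticity. First I would observe that a pluriharmonic function is harmonic for the ordinary Laplacian on $W$ regarded as a real $4$-manifold: in local holomorphic coordinates $z_1,z_2$ one has $\Delta=4\sum_j \partial^2/\partial z_j\partial\bar z_j$, which annihilates any function whose full complex Hessian $(\partial^2\chi/\partial z_j\partial\bar z_k)$ vanishes. Hence $\chi$ is harmonic on $W\setminus A$, and, being continuous, it is locally bounded near $A$.

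The key step is the removability of $A$ for the bounded harmonic function $\chi|_{W\setminus A}$. Here $\dim_\R W=4$ and $A$ is a real-analytic set of real dimension $\le 2=4-2$, which is exactly the borderline (co)dimension for the Newtonian kernel $|x|^{2-4}=|x|^{-2}$; a generic real $2$-plane in $\R^4$ is \emph{not} a priori negligible, so one cannot simply invoke that sets of dimension $<n-2$ are polar. The crucial point is that $A$, being real-analytic, is not merely of dimension $\le n-2$ but has locally finite $(n-2)$-dimensional Hausdorff measure $\mathcal H^{n-2}$; by the classical estimate $\mathcal H^{n-\alpha}(A)<\infty\Rightarrow C_\alpha(A)=0$ with $\alpha=n-2$, the Newtonian capacity of $A$ vanishes, i.e. $A$ is polar. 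Localizing to relatively compact pieces (where $\chi$ is bounded and $A$ has finite $\mathcal H^{n-2}$, hence zero capacity) and applying the removable-singularity theorem for bounded harmonic functions across polar sets, the function $\chi|_{W\setminus A}$ extends harmonically across $A$. Since $\chi$ is continuous on $W$ and $W\setminus A$ is dense, this harmonic extension coincides with $\chi$. Therefore $\chi$ is harmonic, and a fortiori real-analytic, on all of $W$.

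Finally I would upgrade harmonicity to pluriharmonicity. In local holomorphic coordinates the entries $\partial^2\chi/\partial z_j\partial\bar z_k$ of the complex Hessian of $\chi$ are now real-analytic functions on $W$, and they vanish identically on $W\setminus A$ because $\chi$ is pluriharmonic there. As $A$ has empty interior (its dimension is strictly smaller than $\dim_\R W=4$), the open set $W\setminus A$ is dense, and by continuity the complex Hessian vanishes on the whole of $W$; thus $\chi$ is pluriharmonic, as claimed. The hard part is the middle step: the singular set sits precisely at the critical codimension $2$, and it is the real-analytic (equivalently, rectifiable, locally $\mathcal H^{n-2}$-finite) nature of $A$ — rather than a crude dimension bound — that forces the capacity to vanish and makes the harmonic extension possible.
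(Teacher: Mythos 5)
Your proof is correct, but it takes a genuinely different route from the paper's. The paper stays inside pluripotential theory: it first disposes of the singular set of $A$ (a semianalytic set of dimension $\le 1$, hence of zero $2$-dimensional Hausdorff measure) via the Harvey--Polking extension theorem for plurisubharmonic functions, and then handles the regular two-dimensional part by a dichotomy --- either $A$ is a complex curve, hence pluripolar and removable by Demailly's theorem, or $A$ is generically totally real, and near totally real points $\chi$ is continued across $A$ as the real part of a holomorphic function on a simply connected domain with strongly Levi-concave boundary. You instead reduce everything to classical real potential theory in $\R^4$: pluriharmonic implies harmonic in local holomorphic coordinates; a real-analytic set of dimension $\le 2$ has locally finite $\mathcal{H}^{2}$ measure and is therefore Newtonian-polar (here you correctly identify and resolve the borderline-codimension subtlety: the dimension bound alone would not suffice); polar sets are removable for locally bounded harmonic functions; and real-analyticity of the harmonic extension plus density of $W\setminus A$ forces the complex Hessian to vanish identically. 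Your route is shorter and avoids both the case analysis and the several-complex-variables machinery (pluripolarity, Levi concavity, Hartogs-type extension); what the paper's route buys is that its tools apply to plurisubharmonic functions rather than only pluriharmonic ones, in keeping with the rest of the paper --- your shortcut is available precisely because $\chi$ and $-\chi$ are both plurisubharmonic, so $\chi$ is genuinely harmonic. Two points you should make explicit in a final write-up: harmonicity is not invariant under holomorphic coordinate changes, so the removability argument must be run in a fixed coordinate chart (which suffices, since pluriharmonicity is a local, invariant notion); and the local finiteness of $\mathcal{H}^{2}$ near the singular points of $A$ is a true but nontrivial fact from the theory of semianalytic sets --- if you prefer to avoid it, stratify $A$ into countably many analytic submanifolds of dimension $\le 2$ and use instead that a countable union of polar sets is polar.
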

\begin{proof}
The problem is local so we can assume that $W$ is an open subset of $\C^2$. We recall that in view of \cite{HP} if $E\subset W$ is a closed subset of $2$-Hausdorff measure $0$ every plurisubharmonic function on $W\setminus E$ extends to $W$ by a (unique) plurisubharmonic function. It follows that a continuous function on $W$ which is  pluriharmonic on $W\setminus E$ is pluriharmonic. This fact reduces the proof of Lemma \ref{Silla} to the case when $A$ is non-singular of pure dimension $2$: indeed, the singular set $S$ of $A$ is a semianalytic subset $2$-Hausdorff measure $0$ (\cite[Remark 3.1, p. 27]{GMT}).

Thus we assume that $A$ is a $2$-dimensional connected real analytic submanifold of $W$.

Let $C$ be the subset of the complex points of $A$: $C$ is a real analytic subset so either $C=A$ or it is of dimension $\le 1$. In the first case $A$ is a complex curve hence a pluripolar set so, by \cite[Theorem (5.24)]{dem} and what is preceding we obtain that $\chi$ is pluriharmonic. In the second one, away from a proper, closed real analytic subset $N$, $A$ is a totally real surface, hence in an open neighbourhood $V$ of $p\in A\setminus N$ there exist local holomorphic coordinates $z=x+iy, w=u+iv$ such that $A\cap V=\{y=v=0\}$.
In the latter, away from a proper, closed real analytic subset $N$, $A$ is a totally real surface so in an open neighbourhood $V\subset W$ of $p\in A\setminus N$ exist local holomorphic coordinates $z=x+iy, w=u+iv$ such that $z(p)=w(p)=0$, $A\cap V=\{y=v=0\}$. Let $\rho=y^2+v^2+v$. If $V$ is sufficiently small  the hypersurface $\{\rho=0\}$ is smooth, contains $A\cap V$ and the domain $V\cap\{\rho>0\}$ is simply connected with strongly Levi concave boundary. Then $\chi_{|V\cap\{\rho>0\}}$ as real part  of a holomorphic function extends through $A\cap V$. Thus $\chi$ is pluriharmonic on $V\setminus N$ whence is pluriharmonic in $V$ since $N$ has $2$-Hausdorff measure $0$
\end{proof}
\begin{lemma}\label{Mario}
{\rm(Reflection principle for pluriharmonic functions)}. Let $W$ be domain in a complex surface $X$ and $M\subset\oli W\setminus W$ a real analytic submanifold of $X$ of dimension $3$. Let $\chi:W\to\R$ be a non constant pluriharmonic function. Assume that
$$
\lim\limits_{\stackrel{x\to M}{x\in W}}\chi(x)=c\in\R.
$$
Then  
\begin{itemize}
\item[a)] there exist an open set $\widetilde{W}$ containing $W\cup M$ and a pluriharmonic function $\widetilde{\chi}:\widetilde{W}\to\R$ extending $\chi.$
\item[b)] If, in addition, $\chi(x)>c$ for $x\in W$, then ${\rm d}\widetilde{\chi}(x)\neq 0$ for $x\in M.$
\end{itemize} 
\end{lemma}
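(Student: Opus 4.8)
The plan is to prove the statement locally near each point $p\in M$ and then glue the local extensions, using that two pluriharmonic functions which agree on the open set $W$ must coincide on every connected overlap. So I fix $p\in M$ and choose local holomorphic coordinates on a small ball $B$ centred at $p$, arranged so that $M\cap B$ is a real analytic hypersurface and $W$ fills exactly one side of it near $p$ (this is the case in which the Lemma is applied; the two--sided case is analogous and easier). In these coordinates $\chi$ is harmonic for the Euclidean Laplacian on $B\cap W$, because $\p\bar\p\chi=0$ forces $\Delta\chi=0$, and it is continuous up to $M\cap B$ with the \emph{constant} Dirichlet datum $\chi\equiv c$.

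The first real step is boundary regularity. Since $M$ is real analytic and the datum is constant, hence real analytic, elliptic regularity up to the boundary gives that $\chi$ is smooth up to $M\cap B$, and then the Morrey--Nirenberg theorem on analyticity of solutions of elliptic equations with analytic data along an analytic boundary shows that $\chi$ is in fact real analytic up to $M\cap B$. Consequently the Taylor expansion of $\chi$ at each point of $M$ converges on a full two--sided ball and produces a real analytic extension $\widetilde\chi$ of $\chi$; patching these balls (they agree on the $W$--side, which is a uniqueness set for real analytic functions) yields a real analytic $\widetilde\chi$ on an open set $\widetilde W\supset W\cup M$.

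It remains to see that the extension is pluriharmonic, which is now formal. The form $\p\bar\p\widetilde\chi$ is real analytic on $\widetilde W$ and vanishes identically on the open set $W$, where $\widetilde\chi=\chi$; by the identity principle for real analytic functions it vanishes on the connected neighbourhood, so $\p\bar\p\widetilde\chi=0$ and $\widetilde\chi$ is pluriharmonic. This gives a). For b), assume $\chi>c$ on $W$. Then $\widetilde\chi-c$ is pluriharmonic, hence harmonic, strictly positive on the $W$--side and vanishing on the real analytic (in particular $C^2$) hypersurface $M$; the Hopf Lemma in the form \cite[Lemma 3.8]{ST}, whose interior ball hypothesis holds because $M$ is smooth, gives that the inner normal derivative of $\widetilde\chi-c$ is strictly positive along $M$, whence $d\widetilde\chi(x)\neq0$ for every $x\in M$.

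The main obstacle is precisely the boundary step, namely upgrading the mere continuity of $\chi$ up to $M$ to real analyticity up to $M$; once that is in hand the rest is automatic. In the situation of the applications $M$ is a real analytic Levi flat hypersurface, and then one can replace the appeal to Morrey--Nirenberg by a more hands--on argument: by \cite{R} one passes to distinguished holomorphic coordinates $(z,w)$ with $M=\{\,{\rm Im}\,w=0\,\}$ and writes $\chi={\rm Re}\,F$ with $F$ holomorphic on the $W$--side; the constancy $\chi\equiv c$ on all of $M$ makes every tangential derivative vanish, in particular $F_z=0$ on $M$, so by one--variable Schwarz reflection in $w$ one gets $F_z\equiv0$, $F=F(w)$, and the problem collapses to the classical planar reflection of $F$ across the segment $\{\,{\rm Im}\,w=0\,\}$. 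In either approach the removable singularity Lemma \ref{Silla} absorbs the lower dimensional exceptional loci (complex tangencies of $M$, or the analytic set where the defining function degenerates), so that the extension is genuinely defined and pluriharmonic across all of $M$.
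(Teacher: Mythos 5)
Your treatment of the one-sided case is correct, but it takes a genuinely different route from the paper's. The paper never invokes boundary elliptic regularity: its first and main step is to show that the hypotheses \emph{force} $M$ to be Levi flat, using the non-constancy of $\chi$ — if the Levi form of a defining function were nonzero on the complex tangent line at some $a\in M$, then in the strongly pseudoconvex case $W$ is filled near $a$ by analytic discs attached to $M$, on each of which $\chi$ is harmonic with constant boundary value $c$, hence constant, whence $\chi\equiv c$ on all of $W$ by real analyticity, a contradiction; the strongly pseudoconcave case is excluded similarly via local pluriharmonic extension through $M$. Rea's theorem \cite{R} then straightens the real analytic Levi flat $M$ to $\{\mathrm{Im}\,w=0\}$ in local holomorphic coordinates, and the extension is the elementary flat reflection of \cite[Theorem 4.12]{ABR}; pluriharmonicity of the extension and part b) are obtained exactly as you obtain them (identity principle for real analytic objects, and Hopf Lemma \cite[Lemma 3.8]{ST}). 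Your substitute for all of this — Kellogg/Schauder boundary regularity plus Morrey--Nirenberg analyticity up to a real analytic boundary with analytic Dirichlet data — is a legitimate classical package, and it has the merit of extending $\chi$ across $M$ without knowing Levi flatness and without using non-constancy in part a); what it costs is self-containedness (a deep PDE theorem replaces elementary complex analysis) and the geometric by-product, which is what the paper really cares about, that $M$ must be Levi flat. Note also that your ``hands-on'' fallback is not a substitute in the generality of the Lemma, since it presupposes Levi flatness of $M$ — precisely the step the paper must prove first. The gluing of the local extensions, which both you and the paper pass over quickly, is standard given connectedness of $W$ and real analyticity, provided the balls are chosen small enough that overlaps meet $W$.

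One genuine error: your parenthetical claim that the two-sided case is ``analogous and easier'' is false — in fact the Lemma itself fails when $W$ abuts $M$ from both sides. Take $X=\C^2$, $M=(-1,1)\times\Delta$ (with $\Delta$ the unit disc), $W=\{z\,:\,0<\mathrm{dist}(z,[-1,1])<\epsilon\}\times\Delta$, which is a connected domain wrapping around the slit, and $\chi(z,w)=\mathrm{Re}\sqrt{z^2-1}$, single-valued and pluriharmonic on $W$ because the monodromy of $\sqrt{z^2-1}$ around the whole slit is trivial. Then $\chi$ is non-constant and $\chi\to 0$ at every point of $M$, but the harmonic extensions from the two sides are $-\mathrm{Im}\sqrt{1-z^2}$ and $+\mathrm{Im}\sqrt{1-z^2}$ respectively, which do not agree; so $\chi$ is not even $C^1$ up to $M$ from the two sides jointly, and no pluriharmonic extension to a neighbourhood of $W\cup M$ can exist, since by the identity principle it would have to coincide with both one-sided extensions. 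This does not damage the paper: its own proof, like yours, tacitly places $W$ on one side of $M$ (it takes $W=\{v>0\}$ after straightening), and the Lemma is only ever applied one-sidedly, e.g.\ to $V^i_+$ in Proposition \ref{Desdemona}. But the one-sidedness is a genuine hypothesis, to be stated rather than dismissed.
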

\begin{proof}
Let us prove that $M$ is Levi flat. Fix an open neighborhood $U\Subset X$ of a point $a\in M$ and any local defining function $\varphi$ for $M$ at $a$ (i.e. $U\cap W=\{p\in U:\varphi(p)<0\}$). Let $L_a$  and assume, by a contradiction, that the Levi form of $\varphi$ at $a$ is not vanishing when restricted to the complex tangent line to $M$ at $a$ i.e. $M\cap U$ is either strongly pseudoconvex or strongly pseudoconcave at $a$ as boundary of $W\cap U$. In the first case, near $a$, $W\cap U$ is filled by a family $\{D_\epsilon\}_\epsilon$ of analytic discs attached to $M$. Since $\chi_{|D_\epsilon}$ is harmonic in $D_\epsilon$ and constant on the boundary, it is constant near $a$ and consequently on whole of $W$, $\chi$ being analytic: contradiction. The proof in the pseudoncave case is similar in view of the fact that then $\chi$ locally extends through $M$ by a pluriharmonic function. This proves that $M$ is Levi flat. 

Since $M$ is real analytic and Levi flat, it is locally biholomorphic to a real hyperplane so we may assume tha $X$ is the open ball $B=\{\vert z\vert^2+ \vert w\vert^2<1\}$, $z=x+iy, w=u+iv$, $W=\{(z,w)\in B\ :\ v>0\}$. This reduces the proof of the lemma to the classical ``reflection principle'' for harmonic functions. In view of \cite[Theorem 4.12]{ABR}, $\chi$ extends to a harmonic function $\widetilde{\chi}: B\to \mathbb{R}$, which is real analytic and pluriharmonic on $W$, therefore pluriharmonic on $W$. Ths shows part a).

Part b) is an application of Hopf Lemma \cite[Lemma 3.8]{ST}. 
\end{proof}
\begin{lemma}\label{ATT}
Let $X$ be a complex surface and $Z$ a compact real analytic set of dimension $\le 2$. Suppose that $Z$ has the local maximum property. Then $Z$ is the union of finitely
many compact complex curves.
\end{lemma}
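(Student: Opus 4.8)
The plan is to show that the $2$-dimensional regular part of $Z$ is a complex curve and that the rest of $Z$ is negligible, after which a removable-singularity theorem identifies $Z$ with a compact complex analytic set. Write $Z^{*}$ for the set of points at which $Z$ is, locally, a real analytic submanifold of real dimension $2$, and put $\Sigma=Z\setminus Z^{*}$. Since $Z$ is real analytic of dimension $\le 2$, the set $Z^{*}$ is open and dense in the $2$-dimensional part of $Z$, it is a closed submanifold of $X\setminus\Sigma$, and $\Sigma$ is a closed real analytic subset of real dimension $\le 1$; in particular $\mathcal H^{2}(\Sigma)=0$, whereas $Z$, being compact and real analytic of dimension $2$, has finite $\mathcal H^{2}$-measure.

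The crux is to exploit the local maximum property pointwise on $Z^{*}$. Fix $p\in Z^{*}$. Since $\rdim T_pZ=2$ inside the complex surface $X$, the subspace $T_pZ\cap iT_pZ$ has even real dimension $\le 2$, hence equals either $T_pZ$ (so $T_pZ$ is a complex line) or $0$ (so $T_pZ$ is totally real); there is no intermediate case. Suppose $T_pZ$ were totally real. As every maximal totally real plane is $\C$-linearly equivalent to $\{\mathsf{Im}\,z_1=\mathsf{Im}\,z_2=0\}$, we may choose holomorphic coordinates $(z_1,z_2)$ centred at $p$ with $T_pZ=\{\mathsf{Im}\,z_1=\mathsf{Im}\,z_2=0\}$, so that $\mathsf{Im}\,z_j=O(|z|^{2})$ on $Z$ near $p$. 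Consider $u=-\mathsf{Re}(z_1^{2}+z_2^{2})+\epsilon(|z_1|^{2}+|z_2|^{2})$ with $0<\epsilon<1$: it is strongly plurisubharmonic, its Levi form being $\epsilon\,\mathrm{Id}$ (the first summand is pluriharmonic), $u(p)=0$, and a direct expansion gives $u|_{Z}=(\epsilon-1)(x_1^{2}+x_2^{2})+O(|z|^{4})\le -c|z-p|^{2}$ for $z\in Z\setminus\{p\}$ near $p$. By \cite[Proposition 2.3]{Sl2} this is precisely the statement that $Z$ fails the local maximum property at $p$, a contradiction. Hence $T_pZ$ is a complex line for every $p\in Z^{*}$, and a real analytic surface with everywhere complex tangent space is a complex curve; thus $Z^{*}$ is a pure $1$-dimensional complex submanifold of $X\setminus\Sigma$.

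The same construction rules out lower-dimensional pieces: at a regular point $p$ of a component of real dimension $\le 1$ (chosen off all other components) the very same $u$, restricted to the arc or point, still has a strict local maximum at $p$ with the required quadratic bound, so again the local maximum property fails. Therefore $Z$ has no component of dimension $<2$, i.e. $Z=\overline{Z^{*}}$ and $\Sigma=\overline{Z^{*}}\setminus Z^{*}$. At this stage $Z^{*}$ is a pure $1$-dimensional complex analytic subset of the open set $X\setminus\Sigma$ with locally finite $\mathcal H^{2}$-measure, and $\Sigma$ is closed with $\mathcal H^{2}(\Sigma)=0$. By Bishop's extension theorem for analytic sets, $\overline{Z^{*}}=Z$ is a pure $1$-dimensional complex analytic subset of $X$; being compact, it has finitely many irreducible components, each a compact complex curve, which is the assertion.

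The main obstacle is the step detecting the failure of the local maximum property at a non-complex point: one needs a strongly plurisubharmonic function decreasing quadratically along $Z$ away from $p$, and it is exactly the dichotomy \emph{complex or totally real}, available for real surfaces in a complex surface, that makes such a function easy to write down. The remaining difficulty is organizational, namely guaranteeing $\mathcal H^{2}(\Sigma)=0$ and that $Z^{*}$ has finite area so that Bishop's theorem applies; both follow from the real analyticity and compactness of $Z$.
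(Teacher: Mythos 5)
Your proof is correct, but it takes a genuinely different route from the paper's. The paper translates the local maximum property into pseudoconcavity of $Z$, via Slodkowski's theorem that the complement of a local maximum set is pseudoconvex (\cite[Theorem 2]{Sl0}), and then applies Hartogs' classical theorem: a closed pseudoconcave set which is finitely sheeted over a domain is a complex analytic variety. The remaining work there is to produce, at every point of $Z$, a bidisc neighbourhood and a transversal complex line realizing the finite-sheetedness hypothesis; this is immediate at regular $2$-dimensional points but delicate at the remaining points, and the paper explicitly omits those details. You instead exploit the local maximum property pointwise: the complex/totally-real dichotomy for tangent $2$-planes in a complex surface, the explicit strongly plurisubharmonic barrier $-\mathsf{Re}(z_1^2+z_2^2)+\epsilon\left(|z_1|^2+|z_2|^2\right)$ whose strict quadratic peak rules out totally real tangents as well as all pieces of dimension $\le 1$, and finally Bishop's removable-singularity theorem to push the complex curve $Z^{*}$ across $\Sigma$. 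Your route handles the singular set by a clean, standard removability statement instead of the sketched finite-sheetedness argument, so in that respect it is arguably more complete than the paper's; the price is that you must invoke the tameness of real analytic sets --- that the non-manifold locus $\Sigma$ has dimension $\le 1$, hence $\mathcal H^{2}(\Sigma)=0$, and that the compact set $Z$ has finite $\mathcal H^{2}$-measure --- facts which follow from stratification theory and which the paper uses elsewhere anyway. Two harmless imprecisions: $\Sigma$ need not itself be real analytic (only semianalytic), but only its dimension bound is used; and the direction of \cite[Proposition 2.3]{Sl2} you actually need --- that exhibiting such a barrier contradicts the local maximum property --- is the elementary implication, immediate from the definition, rather than the nontrivial converse quoted in the paper's proof of Lemma \ref{lmp1}.
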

 \begin{proof}
The proof is based on the following classical result of Hartogs \cite{Har}. Let $U$, $V$ be domains in $\C^2$ and $Y\subset U\times V$ be a closed set, finitely sheeted over $U$, in the sense that 
\begin{itemize}
\item[i)] $\oli Y\cap{\rm b}V=\varnothing$ 
\item[ii)] $Y\cap(\{z\}\times\C)$ is finite, for every $z\in U$. 
\end{itemize}
Assume that $Y$ is pseudoconcave in $U\times V$. Then $Y$ is a complex analytic variety in $U\times V$.

By Hartogs result, it is enough to show that every point $p\in Z$ has a neighborhood $W=U\times V$, which, (with respect to some local holomorphic coordinate system at $p$) satisfies i) and ii), $X\setminus Z$ being pseudoconvex (see \cite[Theorem 2]{Sl0}).

If $p$ is any point in a $2$-dimensional connected component of the regular set of $Z$, and $L$ is a complex line through $p$, transversal to the $2$-dimensional (real) tangent plane $T_p(Z)$, then a small neighborhood of $p$ in $Z$ will do.

Consider now another point $p\in Z$, and a complex line $L$ through it, in a small coordinate neighborhood $W$ of $p$. In view of the above, $W\cap Z$ is the union of several open complex submanifolds and (a priori) several real analytic arcs (the latter is actually impossible). Each of them (submanifolds or arcs) either intersects $L$ at a discrete countable set, or is fully contained in $L$. The latter is possible only for finitely many $L$, and so there is $L$ through $p$ and a neighborhood $W_0$ of $p$ such that for every complex line $L^\ast$ parallel to $L$ and intersecting $W_0$, the set $Z\cap W_0$ is finite. It is to show that a smaller neighborhood $W_1$ satisfying i) and ii) can be selected, so that $W_1\cap Z$ is a complex variety. We omit further details.
\end{proof}

\begin{bibdiv}
\begin{biblist}

\bib{ABR}{book}{
   author={Axler, Sheldon},
   author={Bourdon, Paul},
   author={Ramey, Wade},
   title={Harmonic function theory},
   series={Graduate Texts in Mathematics},
   volume={137},
   edition={2},
   publisher={Springer-Verlag, New York},
   date={2001},
   pages={xii+259},
   isbn={0-387-95218-7},
   doi={10.1007/978-1-4757-8137-3},
}
\bib{barr}{article}{
 author={Barrett, D.},
   title={Global convexity properties of some families of three-dimensional compact Levi-flat hypersurfaces},
   journal={Trans. Amer. Math. Soc., vol.},
   volume={332},
   number={2},
   date={1992},
   pages={459--474},
   issn={0002-9947},
 }

   \bib{Bru}{article}{
   author={Brunella, Marco},
   title={On K\"ahler surfaces with semipositive Ricci curvature},
   journal={Riv. Math. Univ. Parma (N.S.)},
   volume={1},
   date={2010},
   number={2},
   pages={441--450},
   issn={0035-6298},
}

\bib{CaNe}{book}{
   author={Camacho, C{\'e}sar},
   author={Lins Neto, Alcides},
   title={Geometric theory of foliations},
   note={Translated from the Portuguese by Sue E. Goodman},
   publisher={Birkh\"auser Boston, Inc., Boston, MA},
   date={1985},
   pages={vi+205},
   isbn={0-8176-3139-9},
   review={\MR{824240 (87a:57029)}},
   doi={10.1007/978-1-4612-5292-4},
}

\bib{Car}{article}{
   author={Cartan, Henri},
   title={Quotients of complex analytic spaces},
   conference={
      title={Contributions to function theory},
      address={Internat. Colloq. Function Theory, Bombay},
      date={1960},
   },
   book={
      publisher={Tata Institute of Fundamental Research, Bombay},
   },
   date={1960},
   pages={1--15},
}

\bib{doC}{book}{
  author={do Carmo, Manfredo Perdig{\~a}o},
   title={Riemannian geometry},
   series={Mathematics: Theory \& Applications},
   note={Translated from the second Portuguese edition by Francis Flaherty},
   publisher={Birkh\"auser Boston, Inc., Boston, MA},
   date={1992},
   pages={xiv+300},
   isbn={0-8176-3490-8},
   doi={10.1007/978-1-4757-2201-7},
   }
   \bib{dem}{book}{
    author={Demailly, J.-P.},
     title={Complex, Analytic and Differential Geometry},
 publisher={Univ. Grenoble I, Institut Fourier},
     place={Grenoble},
      date={1997},
      }
      
\bib{DieOhs}{article}{
   author={Diederich, Klas},
   author={Ohsawa, Takeo},
   title={A Levi problem on two-dimensional complex manifolds},
   journal={Math. Ann.},
   volume={261},
   date={1982},
   number={2},
   pages={255--261},
   issn={0025-5831},
   doi={10.1007/BF01456222},
}

   \bib{GMO}{article}{
   author={Gilligan, Bruce},
   author={Miebach, Christian},
   author={Oeljeklaus, Karl},
   title={Pseudoconvex domains spread over complex homogeneous manifolds},
   journal={Manuscripta Math.},
   volume={142},
   date={2013},
   number={1-2},
   pages={35--59},
   issn={0025-2611},
  doi={10.1007/s00229-012-0592-8},
}

\bib{GMT}{book}{
   author={Guaraldo, Francesco},
   author={Macr{\`{\i}}, Patrizia},
   author={Tancredi, Alessandro},
   title={Topics on real analytic spaces},
   series={Advanced Lectures in Mathematics},
   publisher={Friedr. Vieweg \& Sohn, Braunschweig},
   date={1986},
   pages={x+163},
   isbn={3-528-08963-6},
   doi={10.1007/978-3-322-84243-5},
}
   \bib{GU}{book}{
   author={Gunning, R. C.},
   title={Lectures on Riemann surfaces},
   series={Princeton Mathematical Notes},
   publisher={Princeton University Press, Princeton, N.J.},
   date={1966},
   pages={iv+254},
}
  \bib{Har}{article}{
   author={Hartogs, Friedrich},
   title={\"Uber die singul\"aren Stellen einer analytischen Funktion besthenden Gebilde},
   journal={Acta Math.},
   volume={32},
   date={1909},
   number={},
   pages={57--79},
}
   \bib{HP}{article}{
   author={Harvey, R.},
   author={Polking, J.},
   title={Extending analytic objects},
   language={},
   journal={Comm. Pure Appl. Math.},
   volume={28},
   date={1975},
   pages={701--727},
   issn={},
   review={},
   }
   
   \bib{Hir}{article}{
  author={Hironaka, Heisuke},
   title={Desingularization of complex-analytic varieties},
   language={French},
   conference={
      title={Actes du Congr\`es International des Math\'ematiciens},
      address={Nice},
      date={1970},
   },
   book={
      publisher={Gauthier-Villars, Paris},
   },
   date={1971},
   pages={627--631},
   }
      
   \bib{math}{article}{
   author={Mather, John},
   title={Notes on topological stability},
   journal={Bull. Amer. Math. Soc. (N.S.)},
   volume={49},
   date={2012},
   number={4},
   pages={475--506},
   issn={0273-0979},
   doi={10.1090/S0273-0979-2012-01383-6},
   }
   
    \bib{N}{book}{
   author={Narasimhan, Raghavan},
   title={Introduction to the theory of analytic spaces},
   series={Lecture Notes in Mathematics, No. 25},
   publisher={Springer-Verlag, Berlin-New York},
   date={1966},
   pages={iii+143},
   }
   
   \bib{Nar}{article}{
   author={Narasimhan, Raghavan},
   title={The Levi problem in the theory of functions of several complex
   variables},
   conference={
      title={Proc. Internat. Congr. Mathematicians},
      address={Stockholm},
      date={1962},
   },
   book={
      publisher={Inst. Mittag-Leffler, Djursholm},
   },
   date={1963},
   pages={385--388},
}
   
   \bib{Ni}{article}{
   author={Nishino, Toshio},
   title={L'existence d'une fonction analytique sur une vari\'et\'e
   analytique complexe \`a deux dimensions},
   language={French},
   journal={Publ. Res. Inst. Math. Sci.},
   volume={18},
   date={1982},
   number={1},
   pages={387--419},
   issn={0034-5318},
   doi={10.2977/prims/1195184029},
}
\bib{Ohs}{article}{
   author={Ohsawa, Takeo},
   title={Weakly $1$-complete manifold and Levi problem},
   journal={Publ. Res. Inst. Math. Sci.},
   volume={17},
   date={1981},
   number={1},
   pages={153--164},
   issn={0034-5318},
   doi={10.2977/prims/1195186709},
}

\bib{pfl}{book}{
 author={Pflaum, Markus J.},
   title={Analytic and geometric study of stratified spaces},
   series={Lecture Notes in Mathematics},
   volume={1768},
   publisher={Springer-Verlag, Berlin},
   date={2001},
   pages={viii+230},
   isbn={3-540-42626-4},
   }
   
   \bib{R}{article}{
author={Rea, C.},
   title={Levi-flat submanifolds and holomorphic extension of foliations},
   journal={Ann. Scuola Norm. Sup. Pisa (3)},
   volume={26},
   date={1972},
   pages={665--681},
}
   \bib{SCH}{article}{
   author={Shcherbina, N. V.},
   title={On the polynomial hull of a graph},
   journal={Indiana Univ. Math. J.},
   volume={42},
   date={1993},
   number={2},
   pages={477--503},
   issn={0022-2518},
   doi={10.1512/iumj.1993.42.42022},
} 

\bib{Sl0}{article}{
   author={S{\l}odkowski, Zbigniew},
   title={Analytic set-valued functions and spectra},
   journal={Math. Ann.},
   volume={256},
   date={1981},
   number={3},
   pages={363--386},
   issn={0025-5831},
   doi={10.1007/BF01679703},
}
 \bib{Sl2}{article}{
 author={S{\l}odkowski, Zbigniew},
   title={Local maximum property and $q$-plurisubharmonic functions in
   uniform algebras},
   journal={J. Math. Anal. Appl.},
   volume={115},
   date={1986},
   number={1},
   pages={105--130},
   issn={0022-247X},
   doi={10.1016/0022-247X(86)90027-2},
}
\bib{ST}{article}{
    author={S{\l}odkowski, Zibgniew},
   author={Tomassini, Giuseppe},
   title={Minimal kernels of weakly complete spaces},
   journal={J. Funct. Anal.},
   volume={210},
   date={2004},
   number={1},
   pages={125--147},
   issn={0022-1236},
   doi={10.1016/S0022-1236(03)00182-4},
}
 
\bib{sul}{article}{
  author={Sullivan, D.},
   title={Combinatorial invariants of analytic spaces},
   conference={
      title={Proceedings of Liverpool Singularities---Symposium, I
      (1969/70)},
   },
   book={
      publisher={Springer, Berlin},
   },
   date={1971},
   pages={165--168},
}
	
\bib{tro}{article}{
  author={Trotman, David},
   title={Lectures on real stratification theory},
   conference={
      title={Singularity theory},
   },
   book={
      publisher={World Sci. Publ., Hackensack, NJ},
   },
   date={2007},
   pages={139--155},
   doi={10.1142/9789812707499\_0004},
}

\bib{Ued}{article}{
   author={Ueda, Tetsuo},
   title={On the neighborhood of a compact complex curve with topologically
   trivial normal bundle},
   journal={J. Math. Kyoto Univ.},
   volume={22},
   date={1982/83},
   number={4},
   pages={583--607},
   issn={0023-608X},
}
  \end{biblist}
\end{bibdiv}
\end{document}